\newif\ifpdf
\title{Non-vanishing of Taylor coefficients and Poincar\'e series}
\author{Cormac O'Sullivan}
\address{Department of Mathematics\\
 The CUNY Graduate Center \\ New York, NY 10016-4309\\ U.S.A.}
\email{cosullivan@gc.cuny.edu}
\author{Morten S. Risager}
\address{Department of Mathematical Sciences\\
University of Copenhagen\\
Universitetsparken 5, 2100 Copenhagen $\emptyset$ \\ Denmark.} \email{risager@math.ku.dk}
\date{Jan 4, 2012}
\thanks{The second author was supported by a grant from The Danish Natural Science Research Council.}
\begin{document}

\begin{abstract}
We prove recursive formulas for the  Taylor coefficients of
cusp forms, such as  Ramanujan's Delta function, at points in the upper half-plane. This allows us to show the non-vanishing of all Taylor coefficients of  Delta at CM points of small discriminant as well as the non-vanishing of certain Poincar\'e series. At a
``generic'' point all Taylor coefficients are shown to be non-zero. Some conjectures on the Taylor coefficients of Delta at CM points are stated.
\end{abstract}

\maketitle

%modular symbol
\def\s#1#2{\langle \,#1 , #2 \,\rangle}

%domains
\def\H{{\mathbb  H}}
\def\O{{\mathcal O}}
\def\w{{r  }}
\def\F{{\mathfrak F}}
\def\C{{\mathbb C}}
\def\R{{\mathbb R}}
\def\Z{{\mathbb Z}}
\def\Q{{\mathbb Q}}
\def\N{{\mathbb N}}
\def\st{{\mathbb S}}
\def\D{{\mathbb D}}
\def\fun{{\mathbb F}}
\def\B{{\mathbb B}}
%symbols
\def\G{{\Gamma}}
\def\GH{{\G \backslash \H}}
\def\g{{\gamma}}
\def\L{{\Lambda}}
\def\ee{{\varepsilon}}
\def\K{{\mathcal K}}
\def\Re{\text{\rm Re}}
\def\Im{\text{\rm Im}}
\def\SL{\text{\rm SL}}
\def\GL{\text{\rm GL}}
\def\PSL{\text{\rm PSL}}
\def\sgn{\text{\rm sgn}}
\def\tr{\text{\rm tr}}
\def\lqs{\leqslant}
\def\gqs{\geqslant}
\def\z{\mathfrak{z}}
\def\dd{\mathcal{D}}

%cusps
\def\ca{{\frak a}}
\def\cb{{\frak b}}
\def\cc{{\frak c}}
\def\cd{{\frak d}}
\def\ci{{\infty}}

%scaling matrices
\def\sa{{\sigma_\frak a}}
\def\sb{{\sigma_\frak b}}
\def\sc{{\sigma_\frak c}}
\def\sd{{\sigma_\frak d}}
\def\si{{\sigma_\infty}}

\def\se{{\sigma_\eta}}
\def\sz{{\sigma_{z_0}}}
\def\t6{{\tau}}
\def\e6{{\mathcal E}}

%theorems

\newtheorem{theorem}{Theorem}[section]
\newtheorem{lemma}[theorem]{Lemma}
\newtheorem{prop}[theorem]{Proposition}
\newtheorem{cor}[theorem]{Corollary}
\newtheorem{conj}[theorem]{Conjecture}
\newtheorem{example}[theorem]{Example}
\newtheorem{remark}{Remark}

\newcounter{coundef}
\newtheorem{adef}[coundef]{Definition}

\newcounter{thm1count}
\newtheorem{thm1}[thm1count]{Theorem}

\newcommand{\x}{\scriptstyle}

\renewcommand{\labelenumi}{(\roman{enumi})}

\numberwithin{equation}{section}

% other

\newcommand{\abs}[1]{\left\lvert #1 \right\rvert}
\newcommand{\norm}[1]{\left\lVert #1 \right\rVert}

\bibliographystyle{plain}

\section{Introduction}

\subsection{Background} Let $\G=\SL(2,\Z)$ be the full modular group acting on the upper half plane $\H$.  Then $\G$ has a cusp at $\infty$
with stabilizer
$\Gamma_\infty=\{\pm(\smallmatrix 1 & n \\ 0 & 1 \endsmallmatrix)|n \in \Z\}$. Denote the space of holomorphic cusp forms for $\G$ of even weight $k$ by $S_k(\G)$. Every $f \in S_k(\G)$ has a $q$-expansion of the form
\begin{equation}\label{af}
    f(z)=\sum_{m=1}^\infty c_\ci(f,m) q^m,
\end{equation}
for $q=e^{2\pi i z}$. The famous inhabitant of the one-dimensional space $S_{12}(\G)$ is
\begin{equation}\label{dl0}
\Delta(z)  :=  q \prod_{n=1}^\infty (1-q^{n})^{24}  =  q -24q^2+ 252q^3 -1472q^4+ 4830q^5  + O(q^6),
\end{equation}
Ramanujan's Delta function. Ramanujan discovered many of the remarkable
arithmetic properties of the  coefficients $\tau(m):=c_\ci(\Delta,m)$ that bear
his name. These properties were later proved, most notably by Ramanujan himself, Mordell
and Deligne. In 1947 Lehmer \cite{Lehmer:1947a} asked if
the following statement is true:
\begin{equation}
\label{lehmersconjecture}\tau{(m)}\neq 0 \quad \text{ for every } \quad m \in \Z_{\gqs 1}.
\end{equation}
We will refer to (\ref{lehmersconjecture}) as \emph{Lehmer's
  conjecture}.  Lehmer initially verified that $\tau(m)\neq 0$ for $m<3316799$. This has
been greatly improved over the years and is now verified for $m <
2 \cdot 10^{19}$
\cite{Bosman:2007a}.  Recently   Lehmer's conjecture has found an interesting interpretation in terms of  spherical $t$-designs: it is equivalent to the shell of norm $2m$ of the E8 lattice never being a spherical 8-design for $m \in \Z_{\gqs 1}$. See for example    \cite{BannaiMiezaki:2010} and the contained references for this connection.

Associated with each point $z_0=\alpha+i \beta \in \H$ there is another natural   expansion for $\Delta$, less well-known than \eqref{dl0}, that we describe next.  Let $\D$ be the open unit
disc,  centered at the origin in $\C$, and set
\begin{equation}\label{scaling}
\sz =  \frac{1}{2i \beta}\begin{pmatrix} -\overline{z_0} & z_0 \\ -1 & 1 \end{pmatrix} \in \GL(2,\C).
\end{equation}
Then $\sz 0 = z_0$, $\sz \infty = \overline{z_0}$ and  $z \mapsto \sz z$ is a biholomorphic map  $\D \to \H$. Let $f:\H \to \C$.
The stroke operator $|$ is
defined as usual by
$$
 \left( f|_k \g \right)(z) := \frac{\det(\g)^{k/2} f(\g z)}{j(\g,z)^k} \quad \text{ for} \quad\g=(\smallmatrix a & b \\ c & d \endsmallmatrix),\quad j(\g,z):=cz+d.
$$
If $f$ is holomorphic in $\H$ then
$f\vert_k\sigma_{z_0}(z)$ is holomorphic in $\D$ with a Taylor
expansion at zero:
\begin{equation} \label{czfn}
f\vert_k\sigma_{z_0}(z)=\sum_{n=0}^\infty c_{z_0}(f,n)z^n.
\end{equation}

\begin{example}
{\em
At $z_0=i$ and $z_0=\omega:=e^{2\pi i/3}$, the elliptic fixed points for $\G$, we have
\begin{eqnarray}\label{etau1}
\frac{\bigl( \Delta|_{12} \sigma_i \bigr) (z)}{-64\Delta(i)}
& = &    1-12\frac{(\w_iz)^2}{2!}+ 216\frac{(\w_iz)^4}{4!}+10368\frac{(\w_iz)^6}{6!}     + O(z^{8}) \\
\frac{\bigl( \Delta|_{12} \sigma_\omega \bigr) (z)}{ -27\Delta(\omega)}
& = &
1+48\frac{(\w_\omega z)^3}{3!}+
   18432\frac{(\w_\omega z)^6}{6!}  +13271040 \frac{(\w_\omega z)^9}{9!} + O(z^{12}) \label{etau2}
\end{eqnarray}
where
$r_i=-\Gamma(1/4)^{4}/(8\sqrt{3}\pi^{2})$ and
$r_\omega=-\sqrt{3}\Gamma(1/3)^{6}/(16\pi^{3})$. See \S \S \ref{i} and \ref{w} for \eqref{etau1} and \eqref{etau2}.
}
\end{example}

The Taylor expansion \eqref{czfn} is a very natural one to use as its radius of convergence  is $1$ and
$$
f(z)=\frac{(z_0-\overline{z_0})^{k/2}}{(z-\overline{z_0})^{k}}\sum_{n=0}^{\ci} c_{z_0}(f,n)\left( \sigma_{z_0}^{-1} z\right)^n
$$
is valid for all $z \in \H$. The numbers $c_{z_0}(f,n)$ in \eqref{czfn} have a dual nature. Certainly they are the (normalized) Taylor coefficients of $f$ at $z_0$, and as Taylor coefficients they may be found from the derivatives of $f$, as in Proposition \ref{bim}. This property will be crucial in \S\S \ref{non-vanishing},  \ref{cma}. It is also important to regard the numbers $c_{z_0}(f,n)$  as Fourier coefficients and obtain them through integration, as in \S \ref{basics-fourier-coefficients}. This also highlights their similarities to the Fourier coefficients at cusps, such as $c_\ci(f,m)$ in \eqref{af}. For these reasons, throughout this paper we call $c_{z_0}(f,n)$ the {\em $n$-th Fourier coefficient of $f$ at $z_0$}.

These non-cuspidal Fourier coefficients have previously been studied  and/or used
  by several authors including
  Petersson \cite{Petersson:1941a}, Mori \cite{Mori:1995,Mori:2011},
  Datskovsky and Guerzhoy \cite{DatskovskyGuerzhoy:2008}, Rodriguez Villegas and
  Zagier \cite{VillegasZagier:1993a}, Imamo{\=g}lu  and
  O'Sullivan \cite{ImamogluOSullivan:2009}, and for non-holomorphic
  analogues by Sarnak \cite{Sarnak:1994a},
  Reznikov \cite{Reznikov:2007a}, and others.

\begin{remark}\label{trivial-vanishing}  {\em We notice that $c_{z_0}(\Delta,n)=0$ unless $n$
is divisible by 2 in \eqref{etau1}, and divisible by 3 in
\eqref{etau2}. This vanishing  is a basic general phenomenon: If $f$ is a form of weight
$k$, and  $z_0$ is a  fixed point of
order $N\gqs 1$ for $\G$ then $c_{z_0}(f,n)=0$ whenever $n\not \equiv -k/2 \bmod  N$. See
\S \ref{basics-fourier-coefficients} for an explanation.
When $n\not \equiv -k/2 \bmod N$ we call the corresponding Fourier coefficients
\emph{trivial}. Note that if $z_0$ is \emph{not} an elliptic fixed point
(i.e. if $N=1$) then there are no trivial Fourier coefficients.}
\end{remark}

\subsection{The main results}
We first investigate Fourier expansions at CM points.
 Let $D$ be a negative fundamental discriminant, i.e. $D<0$
with $D\equiv 1 \bmod 4$ and squarefree, or with $D=4 m$ where $m\equiv
2,3\bmod 4$ and squarefree.  We consider CM points of the form
\begin{equation}\label{zzd}
    \z_{D}:= \begin{cases} \sqrt{D}/2 & \text{for $D$ even} \\ (1+\sqrt{D})/2 & \text{for $D$ odd}, \end{cases}
\end{equation}
such that $D$ is
the discriminant of $\Q(\sqrt{D})$, with ring of integers $\mathcal{O}_{\Q(\sqrt{D})}=\hbox{span}_\Z(1,\z_D)$.

For $\z$ a CM point and $f \in S_k(\G)$ with algebraic Fourier coefficients at $\ci$, it is known (due to the work of  Damerell and others - see for example
\cite{DatskovskyGuerzhoy:2008} for references) that there exist  non-zero complex numbers $\kappa$ and $\lambda$
such that  the quotients
$$
c_{\z}(f,n)/(\kappa \lambda^{n})
$$
are algebraic for all $n$. We show, using a technique of  Rodriguez Villegas and Zagier,  that for $\Delta(z)$ these algebraic numbers are essentially the constant terms
in a recursively defined sequence of polynomials with algebraic
integer coefficients:

\begin{theorem} \label{mainthm}
For every CM point $\z = \z_D$, with $D$ a negative fundamental discriminant, there exists a number field $K$, constants $\kappa_{\z},\lambda_{\z} \in \C$ and $a_i(t)\in \O_K[t]$ (all depending on $\z$ and explicitly given), so that the $m$-th Fourier coefficient of $\Delta(z)$ at $\z$ is
\begin{equation}\label{ty}
c_\z(\Delta,m) = \kappa_{\z}\frac{\lambda_{\z}^m}{m!}q_{m, \z}(0) \qquad(m \gqs 0)
\end{equation}
with  $q_{m, \z}(t) \in \O_K[t]$ defined recursively by
\begin{equation}\label{simplified-writing}
\begin{split}
q_{0, \z}(t)={}& 1, \quad q_{1, \z}(t)=a_{1}(t), \\
q_{n+1,\z}(t)={}&(a_{1}(t)+n \,a_{2}(t))q_{n,\z}(t)+a_{3}(t)q'_{n,\z}(t)+n(n+11)a_{4}(t)q_{n-1,\z}(t) \qquad(n \gqs 1).
\end{split}
\end{equation}
\end{theorem}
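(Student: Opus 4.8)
The plan is to reduce the computation of the Fourier coefficients $c_\z(\Delta,m)$ to the evaluation at $\z$ of iterated Maass--Shimura derivatives of $\Delta$, and then to convert the differential recursion satisfied by those derivatives into the algebraic recursion \eqref{simplified-writing}. First I would use the derivative description of the Fourier coefficients (Proposition~\ref{bim}) together with the raising operator $\partial_\ell=\frac{1}{2\pi i}\frac{d}{d\tau}+\frac{\ell}{2\pi i}\cdot\frac{1}{\tau-\bar\tau}$, which sends nearly holomorphic forms of weight $\ell$ to weight $\ell+2$. Writing $\partial^{(n)}=\partial_{10+2n}\circ\cdots\circ\partial_{12}$, the standard dictionary between the disc model $f|_k\sz$ and these derivatives gives $c_\z(\Delta,n)$ as a constant multiple of $(\partial^{(n)}\Delta)(\z)$, the constant having the shape $\kappa\,\lambda^{n}/n!$ with $\lambda$ absorbing the factor of $\Im\z$ produced at each step. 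This is exactly the point where the two natures of $c_\z(\Delta,n)$ described in the introduction meet.

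Next I would produce the recursion. Each $\partial^{(n)}\Delta$ is nearly holomorphic of weight $12+2n$, so it equals $\Delta$ times a quasimodular polynomial in $E_2^{*}=E_2-3/(\pi\,\Im\tau)$, $E_4$ and $E_6$. Applying $\partial_{12+2n}$ and reducing by Ramanujan's identities $\partial E_2=(E_2^{2}-E_4)/12$, $\partial E_4=(E_2E_4-E_6)/3$, $\partial E_6=(E_2E_6-E_4^{2})/2$ together with $\partial\Delta=E_2\Delta$ expresses $\partial^{(n+1)}\Delta$ through $\partial^{(n)}\Delta$ and $\partial^{(n-1)}\Delta$. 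The three-term shape, and in particular the coefficient $n(n+11)=n(n+k-1)$, is forced by the $\mathfrak{sl}_2$-commutation relation between the raising operator and its lowering partner acting on the weight-$12$ module; the $q_n'(t)$ term records the derivative of the polynomial part in the non-holomorphic variable. Specializing $E_4,E_6$ and the non-holomorphic datum to their values at the fixed point $\z$ collapses everything to one running variable $t$, yielding the polynomials $q_{n,\z}(t)$ and identifying $a_1,a_2,a_3,a_4$ explicitly.

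It remains to establish \eqref{ty} with coefficients in $\O_K[t]$. Here I would invoke the classical CM evaluations (Damerell, Shimura, in the form used by Rodriguez Villegas and Zagier): at $\z=\z_D$ the quantities $E_4(\z),E_6(\z),\Delta(\z)$ and the nearly holomorphic value $E_2^{*}(\z)$ are all algebraic multiples of powers of a single period $\Omega_D$. Dividing by the appropriate power of $\Omega_D$ makes the entries of the recursion algebraic; collecting $\Delta(\z)$ and $\Omega_D$ into $\kappa_\z$ and the per-step factor into $\lambda_\z$ then gives \eqref{ty}, with $q_m(0)$ reading off the normalized value. Integrality---that the $a_i(t)$ lie in $\O_K[t]$ rather than merely $K[t]$---follows by clearing the fixed denominators $12,3,2$ in Ramanujan's equations at the outset.

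The main obstacle is the bookkeeping that makes the recursion land \emph{exactly} in the three-term form \eqref{simplified-writing} with algebraic-\emph{integer} polynomial coefficients, rather than a longer recursion with coefficients only in $K$. Two points need care: controlling how differentiation of the non-holomorphic part feeds back as the single term $a_3(t)q_n'(t)$, and tracking the powers of $\Im\z$, of $\Omega_D$ and the factor $1/m!$ so that the conversion between the Maass--Shimura normalization and the disc Taylor coefficients introduces no spurious denominators and pins down $\kappa_\z,\lambda_\z$ precisely. The transcendence-theoretic input, namely the algebraicity of $E_2^{*}(\z)$ at the CM point, is the essential nontrivial ingredient but is available from the literature.
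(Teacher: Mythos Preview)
Your overall plan---express $c_\z(\Delta,m)$ via $\partial^m\Delta(\z)$ using \eqref{czm}, invoke Ramanujan's identities, and appeal to CM algebraicity for $E_2^*(\z),E_4(\z),E_6(\z)$---matches the paper's starting point. But the crucial reduction to a \emph{one-variable} polynomial recursion is where your outline and the paper diverge, and the mechanism you sketch does not quite work.

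You propose taking $t$ to be the non-holomorphic datum (essentially $E_2^*$) and specializing $E_4,E_6$ to their CM values. The difficulty is that the recursion producing $\partial^{(n+1)}\Delta/\Delta$ from $\partial^{(n)}\Delta/\Delta$ differentiates $E_4$ and $E_6$ as well as $E_2^*$; once you substitute numerical CM values for $E_4,E_6$ you can no longer run the recursion from step $n$ to step $n+1$. So the collapse to one variable cannot happen \emph{before} the recursion, and afterward you are left with a full sum over $r$ (exactly Theorem~\ref{taylor}), not a single three-term relation. A second issue: $\partial^{(n+1)}\Delta=\partial_{12+2n}(\partial^{(n)}\Delta)$ is a first-order recursion in $n$; the $\partial^{(n-1)}\Delta$ term you invoke via ``$\mathfrak{sl}_2$ commutation'' does not enter here. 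The three-term shape in the paper belongs to a different family of iterates that are \emph{defined} by such a recursion.

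The paper's device, due to Rodriguez Villegas and Zagier, runs as follows. One replaces $\partial$ by a modified \emph{holomorphic} Serre-type derivative $\vartheta_\phi$, with $\phi=\tfrac{1}{12}(E_2-\tfrac{m_1}{m_2}R/Q)$ and $m_1,m_2$ chosen so that $(E_2^*-12\phi)(\z)=0$. Two things are gained: (i) the iterates $\vartheta_\phi^{[n]}$ are \emph{defined} by a three-term recursion whose last coefficient is $n(n+11)\w$; and (ii) in the analogue of \eqref{partmf} the prefactor $(E_2^*/12-\phi)^{m-r}$ vanishes at $\z$ unless $r=m$, so $\partial^m\Delta(\z)=\vartheta_\phi^{[m]}\Delta(\z)$. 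The running variable is then the \emph{holomorphic} weight-$0$ quantity $t$ built from $R/Q^{3/2}$ (shifted so that $t=0$ corresponds to $\z$), and Lemma~\ref{recurrence} converts the three-term relation for $\vartheta_\phi^{[n]}\Delta$ into \eqref{simplified-writing} with explicit $a_i(t)\in\O_K[t]$. The choice of $\phi$---engineered to kill the non-holomorphic piece at the single point $\z$---is precisely the idea your outline is missing; without it one cannot simultaneously keep the recursion and reduce to one variable.
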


Theorem \ref{mainthm} is proved in \S \ref{cma} where $K$, $\kappa_{\z}$, $\lambda_{\z}$ and $a_i(t)$ for $i=1,2,3,4$ are precisely defined.
A set $\mathscr D$ of  negative fundamental discriminants we will focus on is
$$
\mathscr D :=\{-3,-4,-7,-8,-11,-15,-19,-20,-24\}.
$$
Studying the polynomials $q_{m, \z}(t)$ in \eqref{simplified-writing} modulo  $l$ we find
that, for certain primes $l$, $q_{m, \z}(0)$ becomes periodic mod $l$ and all
values in a period are non-zero.  This in turn allows us to conclude that all
the Fourier coefficients $c_{\z}(\Delta,n)$ are non-zero:

\begin{theorem} \label{ell lehmer}
 For $D \in \mathscr D$, all non-trivial Fourier coefficients of $\Delta$ at $\z_D$ are  non-zero.
\end{theorem}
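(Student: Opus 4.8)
The plan is to deduce everything from Theorem \ref{mainthm}. Formula \eqref{ty} writes the $m$-th Fourier coefficient as $c_\z(\Delta,m)=\kappa_\z\lambda_\z^m q_{m,\z}(0)/m!$, and since $\kappa_\z,\lambda_\z\in\C$ are nonzero and $m!\neq 0$, we have $c_\z(\Delta,m)\neq 0$ if and only if $q_{m,\z}(0)\neq 0$. Thus the assertion to prove is purely algebraic: for every $D\in\mathscr D$ and every non-trivial index $m$ (in the sense of Remark \ref{trivial-vanishing}), the algebraic integer $q_{m,\z}(0)\in\O_K$ is nonzero. At the trivial indices $q_{m,\z}(0)$ vanishes automatically, in agreement with the congruence $m\not\equiv -6\bmod N$ visible in \eqref{etau1} and \eqref{etau2}, so these require nothing.

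To certify $q_{m,\z}(0)\neq 0$ without evaluating it exactly, I would fix a rational prime $l$ and a prime ideal $\mathfrak{l}\mid l$ of $\O_K$, giving a finite residue field $\mathbb{F}=\O_K/\mathfrak{l}$. If the reduction $\overline{q_{m,\z}(0)}\in\mathbb{F}$ is nonzero then so is $q_{m,\z}(0)$, because reduction mod $\mathfrak{l}$ sends $0$ to $0$. Reducing the recursion \eqref{simplified-writing} modulo $\mathfrak{l}$ gives the identical recursion over $\mathbb{F}[t]$, in which the integers $n$ and $n(n+11)$ enter only through their residues $n\bmod l$. The goal then becomes: show that the sequence $\overline{q_{m,\z}}(t)\in\mathbb{F}[t]$ is eventually periodic, hence so is the scalar sequence $\overline{q_{m,\z}(0)}\in\mathbb{F}$, and then verify by a finite computation that this scalar sequence is nonzero off the trivial residue classes.

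The eventual periodicity is the main obstacle, and it rests on a degree bound. The recursion is second order with coefficients depending on $n$ only through $n\bmod l$, so the state $(\overline q_n,\overline q_{n-1},n\bmod l)$ determines all later terms; if the first two entries ranged over a finite set, the pigeonhole principle would immediately force eventual periodicity. Over $\O_K$, however, the degrees of the $q_{m,\z}$ grow, since the term $a_3(t)q'_{m,\z}(t)$ can raise the degree, so the state set is a priori infinite. The resolution is a characteristic-$l$ phenomenon: because $\tfrac{d}{dt}t^{l}=0$ in $\mathbb{F}[t]$, the leading coefficient of $\overline q_n'$ vanishes whenever $\deg\overline q_n\equiv 0\bmod l$, so differentiation can no longer drive the degree arbitrarily high. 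A careful bookkeeping of the leading terms of the three summands in \eqref{simplified-writing}, using the explicit $a_i(t)$ defined in \S\ref{cma}, should yield a uniform bound $\deg\overline q_{m,\z}\leq B$. Granting this, the state lives in the finite set $\mathbb{F}[t]_{\leq B}\times\mathbb{F}[t]_{\leq B}\times(\Z/l\Z)$, and eventual periodicity follows. Proving the degree bound explicitly is the technical heart of the argument.

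Finally, with eventual periodicity established, I would, for each of the nine discriminants $D\in\mathscr D$, search for a prime $l$ for which the preperiod and period are short, compute the finite list of values $\overline{q_{m,\z}(0)}$ over one preperiod-plus-period, and check that this list vanishes exactly at the trivial indices and is nonzero at every non-trivial index. By the reduction above this gives $q_{m,\z}(0)\neq 0$, hence $c_\z(\Delta,m)\neq 0$, for all non-trivial $m$. The only non-routine aspect of this last stage is that a suitable $l$ need not exist for every choice; some experimentation is required to find, for each $D$, a prime whose periodic pattern avoids $0$ away from the trivial classes.
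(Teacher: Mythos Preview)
Your overall strategy matches the paper's exactly: reduce via Theorem \ref{mainthm} to showing $q_{m,\z}(0)\neq 0$, then work modulo a suitable prime and exploit eventual periodicity, verified by a finite computation for each $D\in\mathscr D$. The one substantive gap is in your finiteness argument. You propose to bound the degree of $\overline{q}_{m,\z}$ in $(\O_K/\mathfrak l)[t]$, arguing that ``differentiation can no longer drive the degree arbitrarily high'' because $(t^l)'=0$. But the term $a_3(t)q'_{n}$ is not the only one raising the degree: the multiplicative term $(a_1(t)+na_2(t))q_n(t)$ (with $a_1,a_2$ linear in $t$) and the term $n(n+11)a_4(t)q_{n-1}(t)$ (with $a_4$ quadratic) do so as well, and nothing in your observation about $\tfrac{d}{dt}t^l$ controls them. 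A uniform degree bound in $(\O_K/\mathfrak l)[t]$ is neither proved in the paper nor obviously true; the ``careful bookkeeping'' you allude to would have to cope with these multiplicative terms and is not just a routine exercise.

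The paper uses the same fact $(t^l)'=0$ in characteristic $l$ in a cleaner way: it implies that the ideal $(t^l)$ is stable under $\tfrac{d}{dt}$, so the entire recursion \eqref{simplified-writing} descends to the finite quotient ring $\mathcal R_l=(\O_K/l\O_K)[t]/(t^l)$. Since only the constant term $q_n(0)$ is needed and constant terms are preserved in this quotient, eventual periodicity in $\mathcal R_l$ is immediate by pigeonhole on the finite state $(\overline q_n,\overline q_{n-1},n\bmod l)\in\mathcal R_l^2\times(\Z/l\Z)$; see Remark \ref{simplification} and Theorem \ref{periodic}. With this correction your argument is complete and coincides with the paper's; the nine finite verifications are carried out in Propositions \ref{mod5}, \ref{mod7} and Examples \ref{d7}--\ref{d20}.
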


%Theorem \ref{ell lehmer}  is a sample of what one may achieve
%using a technique we describe in \S \ref{non-vanishing}.

\begin{remark}\label{do they all vanish}
{\em The nine  points we consider in Theorem
\ref{ell lehmer} are especially simple, but it is relatively straightforward to test
other CM points.  We prove in Theorem \ref{periodic} that for every CM point
$\z$ and every prime $l$ the sequence $q_{n, \z}(0), n \gqs 0$ becomes periodic
mod $l$. To prove non-vanishing  we require a
prime $l$  such that the whole corresponding period is non-zero mod
$l$. It is tempting to speculate that such a prime always exists and
therefore that  the non-trivial Fourier
coefficients of $\Delta$ at \emph{all} CM points are always non-zero.}
\end{remark}

The sequences  $q_{n, \z}(0)$ for $n \gqs 0$ possess many interesting and, as yet, unexplained features. For example, let $q_n$ be the  coefficient of $(r_i z)^n/n!$ in \eqref{etau1} i.e. $q_0=1, q_1=0, q_2=-12$, etc. As a special case of more general results in  \S \ref{arit}  we show the following.
\begin{theorem}
For all primes $l$ with $2<l<100$ we have
$$
q_{n} \rightarrow 0 \bmod l \iff l \equiv 3 \bmod 4.
$$
\end{theorem}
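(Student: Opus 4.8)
The plan is to specialize Theorem \ref{mainthm} to the CM point $\z = i = \z_{-4}$ (note $\z_{-4} = \sqrt{-4}/2 = i$ and $\mathcal O_{\Q(i)}=\Z[i]$) and to read off the integer sequence $q_n$ of \eqref{etau1}. Comparing \eqref{czfn} with \eqref{etau1} gives
\[
c_i(\Delta,n) = -64\,\Delta(i)\,q_n\,\frac{r_i^{\,n}}{n!},
\]
while Theorem \ref{mainthm} gives $c_i(\Delta,n) = \kappa_i\lambda_i^{\,n}q_{n,i}(0)/n!$; hence
\[
q_n = \frac{\kappa_i}{-64\,\Delta(i)}\Bigl(\frac{\lambda_i}{r_i}\Bigr)^{\!n} q_{n,i}(0) \qquad(n\gqs 0),
\]
so $q_n$ is, up to a constant and a geometric factor, the constant term $q_{n,i}(0)\in\O_K$ produced by the recursion \eqref{simplified-writing}. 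Since $i$ is an elliptic point of order $2$ and $-k/2\equiv 0\bmod 2$, Remark \ref{trivial-vanishing} kills the odd-indexed terms, so only the even subsequence is in play. The first task is to make the three constants explicit and to use the two special values $E_2^*(i)=0$ and $E_6(i)=0$ to simplify the coefficients $a_j(t)$ at $\z=i$.

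Next I would fix a prime $\mathfrak l$ of $\O_K$ above $l$ and compute the $\mathfrak l$-adic valuations of $\kappa_i/(-64\Delta(i))$ and $\lambda_i/r_i$. For the primes in range I expect both to be $\mathfrak l$-units, so that $q_n\equiv(\text{unit})^n\,q_{n,i}(0)\bmod\mathfrak l$ and the statement ``$q_n\to 0\bmod l$'' is equivalent to ``$q_{n,i}(0)\to 0\bmod\mathfrak l$''. By Theorem \ref{periodic} the residues $q_{n,i}(0)\bmod\mathfrak l$ are eventually periodic, so this in turn is equivalent to the eventual period being the constant sequence $0$. It therefore suffices to decide, for each $l$, whether the periodic tail is identically zero, and to match the answer to the congruence class of $l$.

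The arithmetic mechanism is that this tail is controlled by the Frobenius at $l$ on the CM elliptic curve attached to $i$, namely $E:y^2=x^3-x$, which has $j$-invariant $1728$ and complex multiplication by $\Z[i]$. Through Katz's description of the $\theta$-operator $q\,d/dq$ and the Hasse invariant modulo $l$, the advance of \eqref{simplified-writing} over one period reduces to (a power of) the Frobenius $\phi$. For odd $l$, the curve $E$ is supersingular exactly when $l$ is inert in $\Q(i)$, i.e.\ when $\bigl(\tfrac{-1}{l}\bigr)=-1$, equivalently $l\equiv 3\bmod 4$; then $a_l=0$, the characteristic polynomial of $\phi$ is $x^2+l$, so $\phi^2\equiv 0\bmod\mathfrak l$ is nilpotent and forces $q_{n,i}(0)\to 0\bmod\mathfrak l$. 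When $l\equiv 1\bmod 4$ the curve is ordinary, $\phi$ has a unit root, the period advance is invertible modulo $\mathfrak l$, and the sequence returns to nonzero values infinitely often, so it does not tend to $0$. This is precisely the claimed dichotomy.

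I expect the main obstacle to be making the identification of the one-period advance with $\phi$ precise enough to conclude nilpotency in the supersingular case and survival (a nonzero projection onto the unit-root line) in the ordinary case; this is where Katz's theory of mod-$l$ modular forms and the Gauss--Manin connection do the real work, and it is exactly the content abstracted in \S\ref{arit}. The restriction $2<l<100$ then serves two purposes: it removes the ramified prime $l=2$, and it confines to a finite list any small or exceptional primes --- those where $\kappa_i/(-64\Delta(i))$ or $\lambda_i/r_i$ fails to be an $\mathfrak l$-unit, or where the initial state could accidentally lie in the kernel of the unit-root projection --- each of which can be settled directly by computing one period of \eqref{simplified-writing} modulo $l$.
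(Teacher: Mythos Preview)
The paper does not prove this theorem conceptually. Its proof is a finite computation: for each prime $l$ with $2<l<100$ one runs the recursion \eqref{prec} (the specialization of \eqref{simplified-writing} at $\z=i$) in the finite ring $\mathcal R_l=(\Z/l\Z)[t]/(t^l)$ until the sequence $\overline q_{n}(t)$ becomes periodic---Theorem \ref{periodic} guarantees this happens in boundedly many steps---and then inspects whether the eventual period of constant terms is identically zero. The outcomes of these checks are exactly what is recorded in \S\ref{arit} (see \eqref{quadratic residue} and Theorem \ref{th2}, whose $D=-4$ case is the present statement). The bound $l<100$ is simply how far the computation was carried, not a theoretical threshold.

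Your proposal is a genuinely different and more ambitious route: identify the $l$-step advance map $\Psi_{l,i}$ of \S\ref{Periodicity2} with (a power of) Frobenius on the reduction of the CM curve $y^2=x^3-x$, and use the ordinary/supersingular dichotomy to decide whether $\Psi_{l,i}$ is nilpotent or has a unit eigenvalue. If this were carried out it would presumably eliminate the bound $100$ altogether and explain the pattern conceptually. But this is not what \S\ref{arit} contains---that section defines $\Psi_{l,\z}$, proves eventual periodicity by pigeonhole, and tabulates computational results; no Frobenius, no Katz theory, no Gauss--Manin connection appears there. So the line ``this is exactly the content abstracted in \S\ref{arit}'' is not accurate.

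Two real gaps remain in your outline. First, the identification of $\Psi_{l,i}$ with Frobenius on a two-dimensional object is asserted, not proved; the recursion lives in the $l$-dimensional space $\mathcal R_l$, and isolating the two-dimensional crystal on which Frobenius acts as $x^2+l$ requires work you have not supplied (the links to the Hasse invariant and to Kaneko--Zagier type results point in this direction but are not made precise). Second, in the ordinary case you need the initial vector $1\in\mathcal R_l$ to have nonzero component on the unit-root line; you flag this yourself and propose to resolve exceptions by direct computation of one period, but at that point you are back to the paper's method. As it stands, your proposal is a plausible programme for a conceptual proof (and a good conjectural explanation of Theorem \ref{th2}), not yet a proof of the stated finite-range result.
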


We next examine the vanishing of $c_{z_0}(f,n)$ for $f\in S_k(\G)$ and $z_0$ a general point in $\H$. Recall from Remark \ref{trivial-vanishing} that for $z_0 \in \H$ with order $N$,  $c_{z_0}(f,n)$  is non-trivial  when $n \equiv -k/2 \bmod N$. (If $z_0$ is a cusp then we say all $c_{z_0}(f,n)$ for $n\gqs 1$ are non-trivial.)  To highlight the points we are interested in,  define
$$
A_f:=\bigl\{z_0 \in \H \cup \{ \text{cusps of }\G\}\ \big| \ c_{z_0}(f,n) = 0 \text{ \ for some non-trivial Fourier coefficient} \bigr\}.
$$
We would like to know the detailed structure of this set.
With this notation, we see for instance that Lehmer's conjecture may be written succinctly as $\ci \not\in A_\Delta$.

\begin{theorem} \label{almost all non-vanishing}
 For $f \in S_k(\G)$, not identically $0$, the set $A_f$ has measure $0$.
\end{theorem}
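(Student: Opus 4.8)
The plan is to fix $n$ and study the map $g_n\colon z_0 \mapsto c_{z_0}(f,n)$ as a function on $\H$. I would show that $g_n$ is real-analytic and not identically zero, and then invoke the standard fact that the zero set of a nonzero real-analytic function on a connected domain has Lebesgue measure zero. Since the cusps of $\G$ form a countable set and every point of $A_f \cap \H$ satisfies $c_{z_0}(f,n)=0$ for at least one $n$, we have the inclusion $A_f \cap \H \subseteq \bigcup_{n\gqs 0} g_n^{-1}(0)$; a countable union of measure-zero sets is measure zero, and adjoining the countable set of cusps then yields the theorem. Note that it suffices to prove $g_n\not\equiv 0$ for \emph{every} $n$, irrespective of whether a given coefficient is trivial at some elliptic point, because the trivial zeros lie among the full zero set anyway.

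For the real-analyticity I would start from the derivative formula of Proposition \ref{bim}, which expresses the coefficient as a finite combination of the derivatives of $f$,
$$
c_{z_0}(f,n) = (2i\beta)^{k/2}\sum_{j=0}^{n} b_{n,j}\,(2i\beta)^{j}\, f^{(j)}(z_0),\qquad b_{n,j}=\frac{1}{j!}\binom{k+n-1}{\,n-j\,},
$$
where $\beta=\Im z_0$. Each summand is a product of the real-analytic function $\beta=\Im z_0$ and the holomorphic — hence real-analytic — function $f^{(j)}(z_0)$, so $g_n$ is real-analytic on $\H$.

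The main obstacle is the non-vanishing statement $g_n\not\equiv 0$, since the non-holomorphic powers $\beta^{j}$ are coupled to the holomorphic derivatives $f^{(j)}(z_0)$ and cannot be separated directly. To decouple them I would argue as follows. Suppose $g_n\equiv 0$; dividing by the nowhere-vanishing factor $(2i\beta)^{k/2}$ and fixing an arbitrary height $\beta_0>0$, the holomorphic function $z\mapsto \sum_{j=0}^{n} b_{n,j}(2i\beta_0)^{j} f^{(j)}(z)$ vanishes along the entire horizontal line $\Im z=\beta_0$, hence vanishes identically on $\H$ by the identity theorem. Reading the resulting identity as a polynomial in the now-free parameter $\beta_0$ and comparing the constant ($j=0$) terms — whose coefficient $b_{n,0}=\binom{k+n-1}{n}$ is nonzero — forces $f\equiv 0$, contradicting the hypothesis on $f$. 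Thus $g_n\not\equiv 0$ for every $n$.

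Finally I would assemble the pieces: each $g_n^{-1}(0)$ has measure zero by the real-analytic zero-set principle, their countable union over $n\gqs 0$ is still a null set, and adjoining the countably many cusps leaves a set of measure zero that contains $A_f$. The only genuinely delicate point is the decoupling trick in the third paragraph; the rest is bookkeeping.
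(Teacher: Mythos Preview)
Your argument is correct, and the overall architecture (real-analyticity from Proposition~\ref{bim}, non-identical vanishing of each $g_n$, countable union of null sets) matches the paper's. The genuine difference is in how you establish $g_n\not\equiv 0$. The paper splits into two cases: when $\G$ has a cusp it uses the asymptotic $f^{(r)}(z)/f(z)\to(2\pi i n_0)^r$ as $\Im z\to\infty$ (with $n_0$ the first nonzero Fourier coefficient at $\infty$) to see directly that the sum in Proposition~\ref{bim} does not vanish near the cusp; in the cocompact case it invokes the raising-operator identity~\eqref{czm} to show that $g_m\equiv 0$ forces $g_n\equiv 0$ for all $n\gqs m$, so $f$ would be a rational function with only finitely many zeros in $\H$, contradicting the divisor formula $\deg(\operatorname{div}(f))=k\operatorname{Vol}(\G\backslash\H)/(4\pi)$.

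Your decoupling trick---freezing $\beta_0$, applying the identity theorem on the horizontal line $\Im z=\beta_0$, and then reading the result as a polynomial identity in $\beta_0$---sidesteps both the cusp asymptotics and the valence argument, and works uniformly regardless of whether $\G$ has cusps. It is cleaner and more self-contained. What the paper's cocompact argument buys in exchange is a structural observation you do not need here but which is of independent interest: via~\eqref{czm}, the identical vanishing of one $g_m$ propagates upward to all $g_n$ with $n\gqs m$.
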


Theorem \ref{almost all non-vanishing} asserts that for a generic
point all Fourier coefficients are non-vanishing. Since the countable set of cusps has measure $0$, it is really a statement about non-vanishing at points in $\H$. The proof we give  in \S \ref{basics-fourier-coefficients} works for general Fuchsian groups $\G$ that may not have cusps.

\begin{theorem} \label{0}
 The set $A_\Delta$ is non-empty. It contains  $z_0 \approx 1.344 i$ for which $c_{z_0}(\Delta,2)=0$.
\end{theorem}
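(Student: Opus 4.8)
The plan is to search directly on the imaginary axis. Write $z_0 = i\beta$ with $\beta > 0$. Since the elliptic fixed points of $\Gamma$ are the $\Gamma$-translates of $i$ and $\omega$, and a short computation with the fixed points $\tfrac{a\pm i}{c}$ (order $2$) and $\tfrac{2a-1\pm i\sqrt3}{2c}$ (order $3$) shows that zero real part forces the point to be $i$, the only elliptic fixed point on the positive imaginary axis is $i$ itself. Hence every $z_0 = i\beta$ with $\beta \neq 1$ has order $N = 1$ and therefore has no trivial Fourier coefficients. It thus suffices to produce a single $\beta_0 \neq 1$ with $c_{i\beta_0}(\Delta,2) = 0$.

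First I would make the coefficient explicit. Differentiating the definition \eqref{czfn} of $\Delta|_{12}\sigma_{z_0}$ twice at $z=0$ (equivalently, applying the derivative formula of Proposition \ref{bim}), and using $\sigma_{z_0}(0) = z_0$ and $\sigma'_{z_0}(0) = 2i\beta$, one obtains for any $z_0 = \alpha + i\beta$
$$
c_{z_0}(\Delta,2) = \frac{(2i\beta)^6}{2}\Big[\,156\,\Delta(z_0) + 52\, i\beta\,\Delta'(z_0) - 4\beta^2\,\Delta''(z_0)\,\Big],\qquad {}'=\tfrac{d}{dz}.
$$
Specializing to $\alpha = 0$ and inserting the $q$-expansions $\Delta^{(j)}(i\beta) = \sum_m (2\pi i m)^j \tau(m) e^{-2\pi m\beta}$ shows that the bracketed quantity, which I call $\Phi(\beta)$, is a real-valued, real-analytic function of $\beta > 0$: the factor $i^j$ from the $j$-th derivative combines with the explicit $i\beta$ to leave a real number in each term. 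Consequently $c_{i\beta}(\Delta,2)$ vanishes precisely when $\Phi(\beta) = 0$.

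Next I would exhibit a sign change of $\Phi$. At $\beta = 1$ the expansion \eqref{etau1} gives $c_i(\Delta,2) = 384\,\Delta(i)\,r_i^2 > 0$, while the prefactor is $(2i)^6/2 = -32$; hence $\Phi(1) < 0$. As $\beta \to \infty$ only the term $q = e^{-2\pi\beta}$ survives and
$$
\Phi(\beta) = e^{-2\pi\beta}\big(16\pi^2\beta^2 - 104\pi\beta + 156\big) + O\big(e^{-4\pi\beta}\big),
$$
whose bracket is dominated by $16\pi^2\beta^2 > 0$, so $\Phi(\beta) > 0$ for all large $\beta$. By the intermediate value theorem there is $\beta_0 \in (1,\infty)$ with $\Phi(\beta_0) = 0$; the point $z_0 = i\beta_0$ is non-elliptic (as $\beta_0 \neq 1$), so its second coefficient is non-trivial and $z_0 \in A_\Delta$, proving $A_\Delta \neq \emptyset$. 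Summing the rapidly convergent series for $\Phi$ and locating the root then yields $\beta_0 \approx 1.344$, the stated value.

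The routine-but-delicate part will be the numerics: certifying both the sign of $\Phi$ at any chosen $\beta$ and the location $\beta_0 \approx 1.344$ requires bounding the tails of the $q$-series for $\Delta,\Delta',\Delta''$, which decay like $e^{-2\pi m\beta}$ and are easily controlled. The conceptual content — that $\Phi$ is real and changes sign between the exactly known value at $\beta = 1$ and its positive large-$\beta$ asymptotics — is what forces a zero; everything else is bookkeeping.
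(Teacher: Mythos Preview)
Your argument is correct and follows the same core strategy as the paper: restrict to the imaginary axis, observe that the relevant quantity is real-valued there, and apply the intermediate value theorem. The paper packages this via the function $\mathcal E_2(z)=12\bigl[13(E_2^*)^2-E_4\bigr]$, which is a constant multiple of your $\Phi(\beta)/\Delta(i\beta)$, and then evaluates at the two CM points $i=\z_{-4}$ and $\sqrt{2}\,i=\z_{-8}$ using the exact Chowla--Selberg values $\mathcal E_2(i)=-144\,\Omega_{-4}^4<0$ and $\mathcal E_2(\sqrt{2}\,i)=72\,\Omega_{-8}^4>0$, thereby trapping the zero in $(1,\sqrt 2)$. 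You instead take the second evaluation at $\beta\to\infty$ via the leading $q$-expansion term. Your route is more elementary in that it avoids the CM machinery at $\z_{-8}$, while the paper's choice gives a tighter explicit bracket $(1,\sqrt 2)$ without any tail estimates. One small simplification of your setup: since the zero you find satisfies $\beta_0>1$, the point $i\beta_0$ lies in the interior of the standard fundamental domain and is therefore automatically non-elliptic, so the separate analysis of elliptic fixed points on the imaginary axis is unnecessary.
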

Theorem \ref{0} is proved in \S \ref{numer}, where we numerically find further elements of $A_\Delta$.
Lastly, in \S \ref{more on poincareseries}, we generalize an identity of Petersson, \eqref{petform}, and exhibit formulas for the averages
\begin{equation*}
\sum_{f\in \mathcal{F}} c_{z_0}(f,m) \overline{c_\infty(f,n)} \quad
\text{ and } \quad \sum_{f\in \mathcal{F}}
c_{z_0}(f,m) \overline{c_{z_0'}(f,n)}
\end{equation*}
 with $z_0$, $z_0' \in \H$ and $\mathcal{F}$ an orthonormal basis  of $S_k(\G)$.

\vskip 3mm
{\bf Acknowledgements.} We thank Gautam Chinta and J\"urg Kramer for helpful discussions.

\section{Applications}
In this section we describe two applications of our non-vanishing result, Theorem \ref{ell lehmer}.
\subsection{Non-vanishing of Poincar\'e series}
The problem of proving non-vanishing for
Poincar\'e series is very interesting and has been investigated by numerous authors
e.g. \cite{Petersson:1944a,
  Gandhi:1961a,Serre:1985a,Gaigalas:1984a,Metzger:1980a,Serre:1981a,Lehner:1980a,Mozzochi:1989a}.

 Let $e_m(z):=e^{2\pi i mz}$. For $m \in \Z_{\geqslant 1}$, the $m$-th Poincar\'e series of weight
 $4 \leqslant k \in 2 \Z$ associated to the cusp at infinity is
\begin{equation}\label{poincare}
P_{m}(z):= \sum_{\g \in \G_\infty \backslash \G} \left( e_m|_k \g \right)(z) = \sum_{\g \in \G_\infty \backslash \G} \frac{e^{2 \pi i m \g z}}{j(\g,z)^k}.
\end{equation}
 These {\em parabolic} Poincar\'e series span the finite dimensional space $S_k(\G)$.
Petersson \cite{Petersson:1949a} proved that, in fact, $P_1, P_2, \cdots , P_{d(k)}$ form a basis for $S_k(\G)$ with $d(k):=\dim(S_k(\G))$. When $d(k)=0$ (which happens precisely when
$k\in\{4,6,8,10,14\}$)  the Poincar\'e series $P_m(z)$ must vanish
identically. In the simplest non-trivial case, with $k=12$ and $d(k)=1$,
the non-vanishing of every $P_m(z)$ is equivalent to Lehmer's conjecture since
\begin{equation}\label{ptau}
P_m=  \tau(m) \left[\frac{10! }{(4\pi m)^{11}}\right]\frac{\Delta}{\norm{\Delta}^2}
\end{equation}
(See e.g. \cite[(3.29)]{Iwaniec:1997a}). Equation \eqref{ptau} follows from the fact that $S_{12}(\G)$ is one-dimensional
containing $\Delta$ and
the next  result, first demonstrated by Petersson in
\cite{Petersson:1941a}. See also \cite[Theorem 3.3]{Iwaniec:1997a}  for example, for its
proof. Here $\norm{\cdot}$ is the norm corresponding to the Petersson inner product $\s{\cdot}{\cdot}$.
\begin{prop}  \label{porth}
For $f \in S_k(\G)$  and $n \geqslant 1$ we have
\begin{equation*}
\s{f}{P_{n}} =    c_{\ci}(f,n)\left[\frac{ (k-2)!}{(4\pi n)^{k-1}}\right].
\end{equation*}
\end{prop}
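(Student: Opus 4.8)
The plan is to carry out Petersson's unfolding argument. First I would write the inner product as an integral over a fundamental domain for $\GH$, using the hyperbolic measure $y^{-2}\,dx\,dy$ with $z=x+iy$, so that
$$
\s{f}{P_{n}}=\int_{\GH} f(z)\,\overline{P_n(z)}\,y^{k}\,\frac{dx\,dy}{y^2}.
$$
Since $k\gqs 4$, the series defining $P_n$ converges absolutely and locally uniformly on $\H$; this is what will justify interchanging the sum over $\G_\ci\backslash\G$ with the integral in the next step.

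The key algebraic observation is that each summand transforms so that the integral unfolds. For $\g\in\G$, the modularity $f=f|_k\g$ gives $f(\g z)=j(\g,z)^k f(z)$, and combined with $\Im(\g z)=y/\abs{j(\g,z)}^2$ one checks directly that
$$
f(z)\,\overline{(e_n|_k\g)(z)}\,y^k=f(\g z)\,\overline{e_n(\g z)}\,(\Im \g z)^k.
$$
Because $y^{-2}\,dx\,dy$ is $\SL(2,\R)$-invariant, summing this identity over $\g\in\G_\ci\backslash\G$ and integrating over $\GH$ collapses to a single integral of $f(z)\,\overline{e_n(z)}\,y^k$ over the strip $\G_\ci\backslash\H$; here the evenness of $k$ removes any ambiguity coming from $-I\in\G_\ci$. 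Taking $\{0\lqs x\lqs 1,\ y>0\}$ as a fundamental domain, I obtain
$$
\s{f}{P_{n}}=\int_0^\infty\!\!\int_0^1 f(z)\,\overline{e_n(z)}\,y^{k-2}\,dx\,dy.
$$

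To finish I would insert the $q$-expansion $f(z)=\sum_{m\gqs 1}c_\ci(f,m)\,e^{2\pi i m z}$ and use $\overline{e_n(z)}=e^{-2\pi i n x}e^{-2\pi n y}$. The inner integral $\int_0^1 e^{2\pi i(m-n)x}\,dx$ vanishes unless $m=n$, so only the $m=n$ term survives, giving
$$
\s{f}{P_{n}}=c_\ci(f,n)\int_0^\infty e^{-4\pi n y}\,y^{k-2}\,dy=c_\ci(f,n)\,\frac{\Gamma(k-1)}{(4\pi n)^{k-1}},
$$
and $\Gamma(k-1)=(k-2)!$ yields the stated formula.

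The main obstacle is purely analytic: justifying the two interchanges of summation and integration — first the unfolding of the $\G_\ci\backslash\G$ sum, then the term-by-term integration of the $q$-expansion. Both are controlled by absolute convergence, the former from $k\gqs 4$ and the latter from the exponential decay of the cusp form $f$ as $y\to\infty$, so a single appeal to Fubini--Tonelli handles both; this is the only point in the argument that genuinely requires care, the rest being the bookkeeping above.
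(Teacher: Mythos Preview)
Your argument is the standard Petersson unfolding proof and is correct. The paper does not supply its own proof of this proposition; it simply cites Petersson \cite{Petersson:1941a} and \cite[Theorem 3.3]{Iwaniec:1997a}, and the argument in those references is exactly the unfolding computation you have written out, so your proposal coincides with what the paper invokes.
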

We also note that if $m \ll k^{2-\ee}$ then, using
Petersson's formula (\ref{petform}), Rankin \cite[Theorem 1]{Rankin:1980a}  proved that $P_m \not\equiv 0$.

\vskip 3mm
 A second family of Poincar\'e series was introduced by Petersson in \cite{Petersson:1941a} as {\em Poincar\'e series of elliptic type}. They are associated not with cusps but with points $z_0$ in $\H$.
Recall $\sz$  from \eqref{scaling} with $\det(\sz) = 1/(2 i \beta)$ and  $\sz^{-1} =(\smallmatrix 1 & -z_0 \\ 1 & -\overline{z_0} \endsmallmatrix)$ mapping the upper half-plane $\H$  biholomorphically to the open unit
disc $\D$.

Let $\mu_m(z)=z^m$. For $4 \leqslant k \in 2 \Z$ and $m \in \Z_{\geqslant 0}$ define
\begin{eqnarray}
    P_{z_0,m}(z)
      & := &  \sum_{\g \in  \G}
    \bigl(\mu_m |_k \sz^{-1}\g \bigr)(z) \nonumber \\
     & = &  (2i \beta)^{k/2}\sum_{\g \in   \G}
    \frac{(\sz^{-1} \g z)^{m}}
    {j(\sz^{-1} \g,z)^{k}} \label{ellpoin} \\
   & = &  (2i \beta)^{k/2}\sum_{\g \in   \G}
    \frac{(\g z-z_0)^{m}}
    {(\g z-\overline{z_0})^{m+k}j(\g,z)^{k}}. \nonumber
\end{eqnarray}
Each Poincar\'e series $P_{z_0,m}$ is in $S_k(\G)$ and, for fixed $z_0$ with $m$ varying, they span $S_k(\G)$.
This spanning result is implied by the following analogue of Proposition \ref{porth}, also from
\cite{Petersson:1941a}. See
\cite[\S 4.3]{ImamogluOSullivan:2009} as well.
\begin{prop}  \label{eorth}
Let $z_0\in \H$. Then for $f \in S_k(\G)$  and $m \geqslant 0$ we have
\begin{equation*}
\s{f}{P_{z_0, m}} =  c_{z_0}(f,m) \left[\frac{ \pi (k-2)!
m!}{ 2^{k-3}  (m+k-1)!}\right].
\end{equation*}
\end{prop}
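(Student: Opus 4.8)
The plan is to evaluate $\s{f}{P_{z_0,m}}$ by the unfolding technique: use the $\G$-invariance to push the Petersson integral from $\GH$ out to all of $\H$, and then transplant the resulting integral to the unit disc $\D$ via $\sz$, where $f|_k\sz$ is governed by its Taylor expansion \eqref{czfn}. Everything reduces to one clean integral over $\D$ that separates in polar coordinates.

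First I would write $\s{f}{P_{z_0,m}}=\int_{\GH}f(z)\overline{P_{z_0,m}(z)}\,(\Im z)^k\,d\mu(z)$ with $d\mu(z)=(\Im z)^{-2}\,dx\,dy$, set $h:=\mu_m|_k\sz^{-1}$ so that $P_{z_0,m}=\sum_{\g\in\G}h|_k\g$ as in \eqref{ellpoin}, and record the pointwise identity $f(z)\overline{(h|_k\g)(z)}\,(\Im z)^k=f(\g z)\overline{h(\g z)}\,(\Im\g z)^k$, which follows from $f|_k\g=f$ together with $\Im(\g z)=\Im(z)/|j(\g,z)|^2$. Because $k$ is even, $\mu_m|_k(-I)=\mu_m$, so the terms for $\g$ and $-\g$ coincide and the sum over $\SL(2,\Z)$ is twice the sum over $\PSL(2,\Z)$; unfolding the latter against a fundamental domain tiles $\H$ and gives $\s{f}{P_{z_0,m}}=2\int_\H f(z)\overline{h(z)}\,(\Im z)^k\,d\mu(z)$.

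The heart of the matter is the substitution $z=\sz w$, $w\in\D$, for which I would assemble three ingredients. From $j(\sz,w)=(1-w)/(2i\beta)$ (where $\beta=\Im z_0$) one computes $\Im(\sz w)=\beta(1-|w|^2)/|1-w|^2$ and the Jacobian $|dz/dw|=2\beta/|1-w|^2$, whence $d\mu(z)=4(1-|w|^2)^{-2}\,du\,dv$. The same automorphy factor yields $h(\sz w)=(2i\beta)^{-k/2}w^m(1-w)^k$, while \eqref{czfn} gives $f(\sz w)=(2i\beta)^{-k/2}(1-w)^k\sum_n c_{z_0}(f,n)w^n$ directly. Substituting, all powers of $(1-w)$, $(1-\bar w)$ and $\beta$ cancel, and the scalar $(2i\beta)^{-k/2}\overline{(2i\beta)}^{-k/2}\beta^k$ collapses to $2^{-k}$, leaving
$$
\s{f}{P_{z_0,m}}=2^{3-k}\int_\D(1-|w|^2)^{k-2}\,\bar w^{\,m}\Big(\sum_n c_{z_0}(f,n)w^n\Big)\,du\,dv.
$$

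Finally I would pass to polar coordinates $w=\rho e^{i\theta}$: the angular integral $\int_0^{2\pi}e^{i(n-m)\theta}\,d\theta=2\pi\delta_{n,m}$ kills every term except $n=m$, and the radial integral is a Beta integral, $\int_0^1(1-\rho^2)^{k-2}\rho^{2m+1}\,d\rho=\tfrac12 B(m+1,k-1)=\tfrac12\,m!(k-2)!/(m+k-1)!$. Collecting the $2\pi$ from the angle, the $\tfrac12$ from the Beta integral, and the prefactor $2^{3-k}$ produces exactly $c_{z_0}(f,m)\,\pi(k-2)!\,m!/\bigl(2^{k-3}(m+k-1)!\bigr)$, as claimed. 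The main obstacle is not conceptual but bookkeeping: one must track the complex scalar $2i\beta$ and its conjugate through the weight-$k$ slash action and retain the factor $2$ arising from $-I\in\G$, since an error in either place corrupts the constant (though not the shape) of the final formula.
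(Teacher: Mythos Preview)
Your argument is correct and is the standard unfolding proof. The paper does not supply its own proof of this proposition; it simply attributes the result to Petersson \cite{Petersson:1941a} and refers to \cite[\S 4.3]{ImamogluOSullivan:2009}, both of which carry out essentially the computation you sketch (unfold to $\H$, transplant to $\D$ via $\sz$, separate in polar coordinates, evaluate the Beta integral). One small point worth making explicit for completeness: the unfolding and the term-by-term integration are justified by absolute convergence, which follows from the boundedness of $|f(z)|y^{k/2}$ on $\H$ (equivalently, of $|(f|_k\sz)(w)|(1-|w|^2)^{k/2}$ on $\D$) together with $k\geqslant 4$.
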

Thus, the  series $P_{z_0, m}$ isolates the $m$-th Fourier coefficient
at $z_0$  in the same manner as $P_{m}$ isolates the
$m$-th Fourier coefficient at the cusp $\ci$. (Note that $P_{z_0, m}(z) = 2\Phi^*_{\text{Ell}}(z,m,z_0)$ in the notation of \cite{ImamogluOSullivan:2009} ).

One of our original motivations  for the work in this paper is the following basic question:
\begin{equation}\label{question}
    \text{ For which $z_0$, $m$ is $P_{z_0,m} \equiv 0$\,?}
\end{equation}
Clearly there is a lot of cancelation in (\ref{ellpoin}) since
$(\sz^{-1} \g z)^m$ circles around $\D$
as $\g$ runs through $\G$.
We prove
the following result about  Poincar\'e series of weight $k=12$ for the modular group:

\begin{theorem} \label{ell lehmer poin}
Let $D \in \mathscr D$, $m \in \Z_{\geqslant 0}$ and assume $2|m$  if $\z_D=\z_{-4}$ and  $3 | m$  if $\z_D=\z_{-3}$. Then each  Poincar\'e series $P_{\z_D,m} \in S_{12}(\G)$ is
not identically zero.
\end{theorem}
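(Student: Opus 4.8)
The plan is to reduce the non-vanishing of $P_{\z_D,m}$ to the non-vanishing of the single Fourier coefficient $c_{\z_D}(\Delta,m)$, which is precisely the content of Theorem \ref{ell lehmer}. The leverage comes from two facts: that $S_{12}(\G)$ is one-dimensional, spanned by $\Delta$, and the reproducing property of Proposition \ref{eorth}.

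First I would note that $P_{\z_D,m}\in S_{12}(\G)=\C\,\Delta$, so $P_{\z_D,m}=c\,\Delta$ for some $c\in\C$, and hence $\s{\Delta}{P_{\z_D,m}}=\overline{c}\,\norm{\Delta}^2$ vanishes if and only if $P_{\z_D,m}\equiv 0$. Applying Proposition \ref{eorth} with $f=\Delta$, $k=12$ and $z_0=\z_D$ gives
\[
\s{\Delta}{P_{\z_D,m}}=c_{\z_D}(\Delta,m)\left[\frac{\pi\,10!\,m!}{2^{9}(m+11)!}\right].
\]
Since the bracketed factor is manifestly non-zero, we obtain the equivalence $P_{\z_D,m}\equiv 0\iff c_{\z_D}(\Delta,m)=0$.

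Next I would check that the divisibility hypotheses make $c_{\z_D}(\Delta,m)$ a \emph{non-trivial} coefficient, so that Theorem \ref{ell lehmer} applies. By Remark \ref{trivial-vanishing}, with $k=12$ one has $-k/2\equiv 0\bmod N$ for both $N=2$ and $N=3$, so $c_{\z_D}(\Delta,n)$ is trivial exactly when $n\not\equiv 0\bmod N$, where $N$ is the order of $\z_D$. Among the points $\z_D$ with $D\in\mathscr D$ the only elliptic ones are $\z_{-4}=i$, of order $N=2$, and $\z_{-3}$, of order $N=3$; the conditions $2\mid m$ for $\z_{-4}$ and $3\mid m$ for $\z_{-3}$ are exactly $m\equiv 0\bmod N$ in those two cases. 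For every remaining $D\in\mathscr D$ the point $\z_D$ is not elliptic ($N=1$), so no coefficient is trivial. Thus the hypotheses always force $c_{\z_D}(\Delta,m)$ to be non-trivial.

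Finally I would invoke Theorem \ref{ell lehmer}, which states that every non-trivial Fourier coefficient of $\Delta$ at $\z_D$ is non-zero for $D\in\mathscr D$; combined with the equivalence above this gives $P_{\z_D,m}\not\equiv 0$. There is no genuine remaining obstacle, since all the arithmetic difficulty is packaged inside Theorem \ref{ell lehmer}; the only steps needing care are verifying that the reproducing constant never vanishes and the bookkeeping of trivial coefficients at the two elliptic fixed points $i$ and $\omega$.
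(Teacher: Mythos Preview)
Your proof is correct and follows essentially the same approach as the paper: use the one-dimensionality of $S_{12}(\G)$ together with Proposition \ref{eorth} to reduce the vanishing of $P_{\z_D,m}$ to the vanishing of $c_{\z_D}(\Delta,m)$, and then invoke Theorem \ref{ell lehmer}. Your treatment is in fact a bit more careful than the paper's, since you explicitly verify that the divisibility hypotheses on $m$ correspond exactly to the non-trivial coefficients at the two elliptic fixed points.
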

\begin{proof}
Proposition \ref{eorth} implies that in $S_{12}(\G)$,
\begin{equation}\label{preln}
P_{z,m} = \overline{c_{z}(\Delta, m)} \left[\frac{  10!
m! \pi}{ 2^{9}  (m+11)!}\right] \frac{\Delta}{\norm{\Delta}^2}.
\end{equation}
Therefore
 $P_{z, m}$ vanishes
identically if and only if $c_{z}(\Delta, m)$ is zero. Hence Theorem \ref{ell lehmer poin} follows from Theorem
\ref{ell lehmer}, and Remark \ref{do they all vanish} also applies to the
vanishing of $P_{z,m}$.
\end{proof}

It is a consequence of Proposition \ref{eorth} and Remark \ref{trivial-vanishing} that $P_{z_0,m}$
necessarily vanishes identically in $S_k(\G)$ when $m \not\equiv -k/2 \bmod N$ if $z_0$ has order $N$. We label such a $P_{z_0,m}$ {\em trivial}.
Petersson proved in \cite{Petersson:1949a} that  the first $d(k)$ non-trivial Poincar\'e series in $\{ P_{z_0,0}, P_{z_0,1}, P_{z_0,2}, \cdots \}$ form a basis for $S_k(\G)$. We see from \eqref{preln} and Theorem \ref{0} that $P_{z_0,2}$ vanishes non-trivially in $S_{12}(\G)$ for $z_0 \approx 1.344 i$.
  We return to  question \eqref{question} in \S \S \ref{numer}, \ref{more on poincareseries}.

\subsection{Non-vanishing of central values of certain
 $L$-functions.}

Lastly, we briefly %describe
speculate on how this work may be applied to another
very interesting question, namely  non-vanishing of central
critical values of certain character twists of $L$-functions.

The  relation between non-vanishing of Fourier coefficients
at CM points and critical values of certain character twists of
$L$-functions follows from  Waldspurger type
formulas (see e.g. \cite{Waldspurger:1985, Jacquet:1987} and for
more recent developments 
\cite{BruinierJamesKohnenOnoSkinnerVatsal:1999, MartinWhitehouse:2009a, Hida:2010a} and the references therein.)

Let $f \in S_k(\G)$. Waldspurger type formulas give, usually through some type of theta
correspondence, an equality between the absolute value squared of an
adelic twisted toric integral involving $f$ and (a non-zero constant
times) the central critical value of the twisted $L$-function of the
base change $\pi_{f,\K}$ of $f$ to $K=\Q(\z_{D})$, i.e. the value $L(\pi_{f,K}\otimes
\chi^n,1/2)$ where $\chi$ is the basic Gr\"ossencharacter of $K$. In
the present case $\pi_{f,K }$  is the classical Doi-Naganuma
lift.

On the other hand, the adelic twisted toric integral can, in certain
situations,  be related to the $n$-th
Fourier
coefficient of $f$ at CM points like $\z_{D}$ (see
e.g. \cite{Mori:2011, Brakocevic:2011}). So in
these situations we expect that Theorem \ref{ell lehmer} should allow us to show
$$L(\pi_{f,K}\otimes
\chi^n,1/2)\neq 0.$$
%%%%
%%%% See Mori (52) an the formula following it. It is also kind of
%%%% implicit in lots of other places, e.g. in Hida, Waldspurger,
%%%% Reznikov, et.al
%%%%
Since it is known (\cite{JacquetChen:2001a}) that this value is real and
non-negative we would obtain
$$L(\pi_{f,K}\otimes
\chi^n,1/2)> 0.$$
Reznikov in \cite{Reznikov:2007a} used the Fourier expansion at CM points to
prove sub-convexity of  $L$-functions like $L(\pi_{f,K}\otimes
\chi^n,1/2)$ in the $n$ aspect when $f$ is a Maass form. We hope to return to this topic in a future work.

\section{Fourier coefficients}
\label{basics-fourier-coefficients}

In this section we allow $\G$ to be, more generally, a discrete, finitely generated subgroup  of $\SL(2,\R)$ with $\operatorname{Vol}(\G\backslash \H) < \ci$.
For every $z_0\in
\H\cup\{\textrm{cusps of }\G\}$, each $f \in S_k(\G)$ has a Fourier expansion that we next
describe briefly.

For any $\G' \subset \G$, denote by $\overline{\G'}$ its image under the map $\SL(2,\R) \to \SL(2,\R)/\pm 1$.
Let $\G_{z_0}\subset \G$ be the stabilizer of
$z_0$.  Then $\overline{\G}_{z_0}$ is cyclic and there exists a generator $\g_{0} \in \G$ such that $\overline{\G}_{z_0} = \overline{\langle \g_{0} \rangle}$. If $z_0$
is a cusp  we may
pick a scaling matrix $\sigma_{z_0}\in  \SL(2,\R)$  such that
$$
\sigma_{z_0}^{-1} z_0 = \ci, \quad \sigma_{z_0}^{-1} \g_0\sigma_{z_0}z = z+1.
$$
If $z_0\in \H$ we may choose $\sigma_{z_0}\in \hbox{Isom}^+(\D,\H )$, given by \eqref{scaling} for example, with
$$
\sigma_{z_0}^{-1} z_0 = 0, \quad \sigma_{z_0}^{-1} \g_0\sigma_{z_0}z = \zeta^2 z
$$
and $\zeta^2=j(\g_0,\overline{z_0})^2$ necessarily a primitive $N$-th root of unity for  $1\lqs N=\abs{\overline{\G}_{z_0}}<\infty$. See, for example, \cite[\S 1.2]{Shimura} and \cite[\S \S 2.1, 4.1]{ImamogluOSullivan:2009} for more details.

For $z_0$ a cusp we therefore have the Fourier expansion
\begin{equation}\label{fourierexp2}
f\vert_k\sigma_{z_0}(x+iy)=\sum_{n=-\infty}^\infty
a_{z_0}(f,n)e^{2\pi i n x}
\end{equation}
with $a_{z_0}(f,n)$ depending on $y$. For $z_0 \in \H$ we consider $z \in \D$ in polar coordinates $(r,\theta)$, normalized
so that moving around $0$ once  on a circle corresponds to
$\theta\to \theta+1$. We have the Fourier expansion
\begin{equation}\label{fourierexp}
f\vert_k\sigma_{z_0}\left(re^{2\pi i \theta}\right)=\sum_{n=-\infty}^\infty
a_{z_0}(f,n)e^{2\pi i n \theta} \quad \qquad (z_0 \in \H)
\end{equation}
with
\begin{equation}\label{coe}
a_{z_0}(f,n)=\int_0^1 f\vert_k\sigma_{z_0}\left(re^{2\pi i \theta}\right) e^{-2\pi i n\theta} \,d\theta
\end{equation}
depending on $r<1$.
Rewrite the
$q$-expansion \eqref{af} and the normalized Taylor
expansion \eqref{czfn} as follows:
\begin{eqnarray}
% \nonumber to remove numbering (before each equation)
  f\vert_k\sigma_{z_0}(x+iy) &=& \sum_{n=1}^\ci \Bigl[c_{z_0}(f,n) e^{-2\pi n y} \Bigr] e^{2\pi i n x} \quad \text{ for } z_0 \text{ a cusp},\\
  f\vert_k\sigma_{z_0}\left(re^{2\pi i \theta}\right) &=&  \sum_{n=0}^\ci \Bigl[c_{z_0}(f,n) r^{n} \Bigr] e^{2\pi i n \theta} \quad \text{ for } z_0 \in \H.
\end{eqnarray}
Comparing these with \eqref{fourierexp2} and \eqref{fourierexp}  yields formulas for each  $a_{z_0}(f,n)$.
Accordingly,
\begin{equation*}
  a_{z_0}(f,n)=
\begin{cases}
 c_{z_0}(f,n) e^{-2\pi n y} &\textrm{ if \ \ }n \gqs 1 \text{ \ \ and \ \ }z_0 \in \{\textrm{cusps of }\G\}\\
c_{z_0}(f,n)r^n&\textrm{ if \ \ }n \gqs 0 \text{ \ \ and \ \ }z_0 \in \H
  \end{cases}
\end{equation*}
and $a_{z_0}(f,n)=0$ otherwise.
In light of this,  we  label $c_{z_0}(f,n)$ the {\em $n$-th Fourier coefficient $f$ at $z_0$} for all $z_0 \in \H \cup \{\textrm{cusps of }\G\}$, as in the introduction.

Since $f$ is  weight $k$  with respect to $\G$ and $\g_0 \in \G$ we
have
\begin{equation} \label{cor}
  \zeta^{k}f\vert_k\sigma_{z_0}(\zeta^2 z)=\frac{f\vert_k\sigma_{z_0}( \sigma_{z_0}^{-1}
  \g_0\sigma_{z_0}z)}{j(\sigma_{z_0}^{-1}
  \g_0\sigma_{z_0},z)^k}
=f\vert_k\sigma_{z_0}\vert_k\sigma_{z_0}^{-1}
  \g_0\sigma_{z_0}(z)=f\vert_k\sigma_{z_0}( z).
\end{equation}
Using that $\zeta^2=e^{2\pi i m/N}$ where $(m,N)=1$ and inserting \eqref{cor} into \eqref{coe} we obtain
$$
a_{z_0}(f,n) = e^{2\pi i\left(\frac{2 m n+m k}{2N}\right)} a_{z_0}(f,n)
$$
and conclude that unless $(n+k/2)\equiv 0\bmod N$ we
 must have $c_{z_0}(f,n)=0$, as  observed before in Remark \ref{trivial-vanishing}.

We have already seen that $c_{z_0}(f,m)$ can  be expressed as an inner product in Proposition \ref{eorth} and as a polar integral in \eqref{coe}. Since it is also a Taylor coefficient it follows, as in
\cite{Petersson:1941a}, (see also for example \cite[Prop. 16]{ImamogluOSullivan:2009}), that it can be expressed in terms of derivatives of $f$:
\begin{prop}  \label{bim} For $f \in S_k$ and $z_0
\in \H$,
\begin{equation}\label{cz0}
c_{z_0}(f,m)= \sum_{r=0}^m \binom{m+k-1}{r+k-1} \frac{(z_0 - \overline{z_0})^{r+k/2}}{r!}
f^{(r)}(z_0).
\end{equation}
\end{prop}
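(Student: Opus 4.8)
The plan is to read off the coefficients in the Taylor expansion \eqref{czfn} directly from the local power series of $f$ at $z_0$. First I would make the slash action completely explicit. From \eqref{scaling} one has $\det(\sz)=1/(2i\beta)$, $j(\sz,z)=(1-z)/(2i\beta)$ and $\sz z=(z_0-\overline{z_0}z)/(1-z)$, so the definition of $|_k$ gives
$$
(f|_k \sz)(z) = (2i\beta)^{k/2}(1-z)^{-k}\, f\!\left(\frac{z_0-\overline{z_0}z}{1-z}\right).
$$
This is holomorphic on $\D$, and by \eqref{czfn} its $m$-th Taylor coefficient at $z=0$ is precisely $c_{z_0}(f,m)$, so everything reduces to expanding the right-hand side.

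The key computational observation is the clean form of the argument shift,
$$
\sz z - z_0 = \frac{z_0-\overline{z_0}z}{1-z}-z_0 = \frac{(z_0-\overline{z_0})\,z}{1-z}.
$$
Substituting the Taylor series $f(w)=\sum_{r\gqs 0} f^{(r)}(z_0)(w-z_0)^r/r!$ at $w=\sz z$, and using $(2i\beta)^{k/2}=(z_0-\overline{z_0})^{k/2}$, I would obtain
$$
(f|_k \sz)(z)=\sum_{r=0}^\infty \frac{f^{(r)}(z_0)}{r!}\,(z_0-\overline{z_0})^{r+k/2}\,\frac{z^r}{(1-z)^{k+r}}.
$$
Expanding each factor $(1-z)^{-(k+r)}$ by the binomial series and picking out the coefficient of $z^m$ is then the final step: the coefficient of $z^m$ in $z^r(1-z)^{-(k+r)}$ equals $\binom{k+m-1}{m-r}=\binom{m+k-1}{r+k-1}$, and summing over $0\lqs r\lqs m$ reproduces exactly \eqref{cz0}.

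The manipulations are entirely routine, so there is no serious obstacle; the only point deserving a remark is the interchange of the sum over $r$ with the binomial expansions. This is legitimate because $f$ is holomorphic in a neighborhood of $z_0$ and $\sz$ carries a small disc about $0$ into that neighborhood, so both series converge absolutely for $\abs{z}$ sufficiently small and the resulting coefficient of $z^m$ is a finite sum. Since two holomorphic functions agreeing near $0$ have the same Taylor coefficients, the identity extends to all $m\gqs 0$.
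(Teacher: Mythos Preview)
Your argument is correct: the explicit formula for $(f|_k\sz)(z)$, the identity $\sz z-z_0=(z_0-\overline{z_0})z/(1-z)$, and the binomial expansion of $(1-z)^{-(k+r)}$ are all right, and the coefficient extraction gives exactly \eqref{cz0}. The paper does not supply its own proof of this proposition --- it simply cites Petersson \cite{Petersson:1941a} and \cite[Prop.~16]{ImamogluOSullivan:2009} --- so your direct Taylor-expansion computation is precisely the kind of proof those references contain.
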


The Maass raising operator is
\begin{equation}\label{part}
\partial_k f := \dd f - \frac{k}{4 \pi y} f \quad \text{ for } \quad \dd f:= \frac 1{2\pi i} \frac{d}{dz} f(z).
\end{equation}
 It raises the weight by $2$ but does not preserve holomorphy. We follow the convention of writing
$$
\partial^m = \partial_{k+2m-2} \circ \cdots \circ \partial_{k+2} \circ \partial_k
$$
and similarly for $\vartheta$ below. (See (\ref{pppp}) and (\ref{partiale}) in \S \ref{analogs} for the computation of $\partial^m P_n$ and $\partial^m P_{z_0,n}$.)
Use induction to verify that
\begin{equation}\label{dmfz}
\partial^m f(z) = \frac{m!}{(-4 \pi y)^m} \sum_{r=0}^m \binom{m+k-1}{r+k-1} \frac{(2iy)^{r}}{r!}
f^{(r)}(z)
\end{equation}
and comparing \eqref{cz0} with \eqref{dmfz} yields
\begin{equation}\label{czm}
c_{z}(f,m)=(2 \pi i)^m (2iy)^{m+k/2} \frac{\partial^m f(z)}{m!}
\end{equation}
so that $c_{z}(f,m)y^{-k/2-m}$ is a non-holomorphic modular form of weight $k+2m$. The identity \eqref{czm} will be the key to finding formulas for the coefficients $c_{z}(f,m)$ in the next two sections and allows us to prove Theorem \ref{almost all non-vanishing}.

\begin{proof}[Proof of Theorem \ref{almost all non-vanishing}.]
Consider $c_{z}(f,m)$ as a function of $z$. From Proposition \ref{bim}
we see that this function is real analytic. We claim that it does not
vanish identically. Once this has been established real analyticity allows
us to conclude that its set of
zeros has measure zero. Then $A_f$,  as the union of
countably many sets of measure zero, is again of measure zero and the theorem  is proven.

To see that $c_{z}(f,m)$ is non-vanishing we argue
as follows. If we have a cusp (at $\ci$, say) and the $n_0$-th term is the first non-vanishing term
in the Fourier expansion of $f$ at $\ci$ then $f^{(r)}/f\to
(2\pi i n_0)^r$ as $y\to\infty$. From this and Proposition
\ref{bim} it easily follows that  $c_{z}(f,m)$ is not identically
zero. If $\G$ does not have any cusps we assume that $c_{z_0}(f,m)$ vanishes identically. Identity \eqref{czm} implies that $c_{z_0}(f,n) \equiv 0$ also for all $n > m$. Thus
$$
f(z)=\frac{(z_0-\overline{z_0})^{k/2}}{(z-\overline{z_0})^{k}} \sum_{n=0}^{m-1} c_{z_0}(f,n)\left( \frac{z-z_0}{z-\overline{z_0}}\right)^n
$$
implying in particular that $f(z)$, as a rational function, has a finite number of zeros in $\H$. But the results \cite[Prop. 2.16, Theorem 2.20]{Shimura} imply that $\deg(\operatorname{div}(f)) = k \operatorname{Vol}(\G\backslash \H)/(4\pi)$ and hence that the holomorphic $f$ has at least one zero in $\G\backslash \H$. Therefore $f$ has infinitely many zeros in $\H$ and we have contradicted our assumption that $c_{z_0}(f,m)$ vanishes identically.
\end{proof}

%Alternatively one could observe that $$c_{z_0}(f,n)\abs{z}^n=\int_0^1f\vert_k\sigma_{z_0}(\abs{z}e(\theta))\overline{e(n\theta)}d\theta$$

\section{Recursive formulas for Fourier coefficients at non-cuspidal
  points} \label{non-vanishing}

We review the theory of Rodriguez Villegas and Zagier which in certain
arithmetic cases allows us to compute the coefficients $c_{z_0}(f,m)$ explicitly. See \cite{VillegasZagier:1993a}, and \cite[p. 50 - 55, p. 88]{BruinierGeerHarderZagierRanestad:2008a} in particular.
To proceed further, we  focus exclusively on the group $\Gamma=\SL(2,\Z)$ in \S \S \ref{non-vanishing},  \ref{cma}, \ref{arit} and \ref{numer}.
 Let
$$
E_2(z):=1-24\sum_{n \in \Z_{\gqs 1}}\sigma_1(n)e^{2\pi i n z}
$$
be the holomorphic quasimodular Eisenstein series and put $E^*_2(z)=E_2(z)-3/(\pi y)$ so that $E^*_2$ has weight $2$ but is not holomorphic.
The weight $k>2$ holomorphic
Eisenstein series is
$$
E_k(z):=\sum_{\g \in \G_\infty \backslash \G} \left( 1|_k \g \right)(z) = \frac{1}{2}\sum_{\substack{c,d\in \Z\\
    (c,d)=1}}\frac{1}{(cz+d)^k}.
    $$
To aid clarity we will often use the alternate notation $Q$ for $E_4$ and $R$ for $E_6$.

A variation of (\ref{part}), defining the Maass raising operator $\partial$, gives the Serre derivative
$$
\vartheta f = \vartheta_k f := \dd f - \frac{k}{12}E_2  f
$$
mapping $S_k(\G) \to S_{k+2}(\G)$. Our next goal is to show:

\begin{theorem} \label{taylor} For $m \gqs 0$ the $m$-th Fourier coefficient of $\Delta$ at any point $z \in \H$ is
$$
c_{z}(\Delta,m)=\Delta(z)\left(\frac{\pi i}{6}\right)^m (z-\overline{z})^{m+6} \sum_{r=0}^m \frac{1}{r!} \binom{m+11}{r+11} \left(E^*_2\right)^{m-r}\mathcal B_{r}
$$
with $\mathcal B_r \in M_{2r}(\G)$  defined recursively as
\begin{equation}\label{pn1}
\mathcal B_0=1, \ \ \mathcal B_1 = 0, \ \ \mathcal B_{n+1}=12\vartheta \mathcal B_n - n(n+11) Q \mathcal B_{n-1} \qquad (n \gqs 1).
\end{equation}
\end{theorem}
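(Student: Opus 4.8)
The whole theorem reduces to a single closed-form identity for $\partial^m\Delta$. Putting $k=12$ and $z-\overline z = 2iy$ in \eqref{czm}, and comparing with the claimed expression (the constant $(2\pi i)^{-m}(\pi i/6)^m$ works out to $12^{-m}$), the statement is equivalent to
\[
\partial^m\Delta = \frac{m!}{12^m}\,\Delta\, P_m,\qquad P_m := \sum_{r=0}^m \frac{1}{r!}\binom{m+11}{r+11}(E_2^*)^{m-r}\mathcal B_r ,
\]
which I would prove by induction on $m$; the cases $m=0,1$ are immediate from $\mathcal B_0=1$ and $\mathcal B_1=0$.

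First I would assemble three operator facts. Comparing \eqref{part} with the definition of $\vartheta$ gives the identity $\partial_k = \vartheta_k + \frac{k}{12}E_2^*$ on weight-$k$ forms; since $S_{14}(\G)=\{0\}$ we have $\vartheta\Delta=0$ and hence $\partial_{12}\Delta = E_2^*\Delta$. A one-line check shows that $\partial$ obeys the Leibniz rule $\partial_{k_1+k_2}(fg)=(\partial_{k_1}f)g+f(\partial_{k_2}g)$. Finally, writing $E_2^* = E_2 - 3/(\pi y)$ and inserting Ramanujan's formula for $\dd E_2$, the non-holomorphic contributions cancel to leave the clean identity $\partial_2 E_2^* = \bigl((E_2^*)^2 - Q\bigr)/12$; the Leibniz rule then gives, by induction on $j$, the power rule $\partial_{2j}(E_2^*)^j = \frac{j}{12}\bigl((E_2^*)^{j+1}-Q(E_2^*)^{j-1}\bigr)$.

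For the inductive step I would apply $\partial_{12+2m}$ to $\partial^m\Delta = \frac{m!}{12^m}\Delta P_m$. Using the Leibniz rule and $\partial_{12}\Delta = E_2^*\Delta$, this collapses to the single claim
\[
E_2^* P_m + \partial_{2m}P_m = \frac{m+1}{12}\,P_{m+1}.
\]
Expanding $\partial_{2m}P_m$ with the Leibniz rule, the power rule above, and $\partial_{2r}\mathcal B_r = \vartheta\mathcal B_r + \frac{r}{6}E_2^*\mathcal B_r$, the left-hand side becomes an explicit combination of the monomials $(E_2^*)^a\mathcal B_r$, $(E_2^*)^a Q\mathcal B_r$ and $(E_2^*)^a\vartheta\mathcal B_r$.

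The bookkeeping in this last identity is the only real obstacle. I would eliminate every $\vartheta\mathcal B_r$ using the defining recursion \eqref{pn1}, rewritten as $\vartheta\mathcal B_r = \frac{1}{12}\bigl(\mathcal B_{r+1}+r(r+11)Q\mathcal B_{r-1}\bigr)$ for $r\gqs 1$ (with $\vartheta\mathcal B_0=0$), so that both sides are expressed purely through $(E_2^*)^a\mathcal B_b$ and $(E_2^*)^a Q\mathcal B_b$. Collecting coefficients then reduces the whole identity to two combinatorial statements after reindexing: the vanishing of every $Q\mathcal B_b$ coefficient, equivalent to $(m-b)\binom{m+11}{b+11}=(b+12)\binom{m+11}{b+12}$, and the matching of every $\mathcal B_b$ coefficient, equivalent to $(m+b+12)\binom{m+11}{b+11}+b\binom{m+11}{b+10}=(m+1)\binom{m+12}{b+11}$ (the value $b=1$ being vacuous since $\mathcal B_1=0$). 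Both are routine factorial identities. I note finally that no algebraic independence of $E_2^*,Q,R$ is required: once the two sides agree as formal combinations of these monomials they agree as functions, which is all the induction needs.
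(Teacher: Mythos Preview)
Your proof is correct; both your binomial identities check out, and the reduction to $\partial^m\Delta=\frac{m!}{12^m}\Delta P_m$ is exactly right. But the route differs from the paper's. The paper first introduces the Rodriguez~Villegas--Zagier modified Serre derivative $\vartheta^{[n]}$, defined by its own recursion $\vartheta^{[n+1]}f=\vartheta(\vartheta^{[n]}f)-n(n+k-1)\frac{E_4}{144}\vartheta^{[n-1]}f$, and establishes the \emph{general} identity
\[
\partial^m f=\sum_{r=0}^m \frac{m!}{r!}\binom{m+k-1}{r+k-1}\Bigl(\frac{E_2^*}{12}\Bigr)^{m-r}\vartheta^{[r]}f
\]
for arbitrary $f\in S_k$; only afterwards does it specialise to $\Delta$, observing that $12^n\vartheta^{[n]}\Delta=\Delta\cdot\mathcal B_n$ because $\vartheta\Delta=0$. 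You instead bypass $\vartheta^{[n]}$ altogether and run the induction directly on the $\Delta$-specific formula, using the recursion \eqref{pn1} to eliminate $\vartheta\mathcal B_r$ and verifying two explicit binomial identities. Your approach is more self-contained (no auxiliary operator, no appeal to the cited formulas) and the bookkeeping is transparent; the paper's approach is more modular and yields a formula valid for any cusp form, which is reused later (e.g.\ in the proof of Theorem~\ref{cmpoint}) with the variant $\vartheta_\phi^{[n]}$.
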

(Computing, we find for example that $\mathcal B_2 = -12Q$, $\mathcal B_3=48R$, $\mathcal B_4=216 Q^2$, $\mathcal B_5=-4608 QR$ and $\mathcal B_6=1152(9Q^3+16R^2)$.)
\begin{proof}
Rodriguez Villegas and Zagier recursively define a modified
Serre derivative that also stays in the
space of holomorphic cusp forms:
\begin{equation}\label{modified_serre}
\vartheta^{[0]}f = f, \ \ \vartheta^{[1]}f = \vartheta f, \ \ \vartheta^{[n+1]}f = \vartheta ( \vartheta^{[n]} f) -n(n+k-1) \frac{E_4}{144} \vartheta^{[n-1]}f.
\end{equation}
We have $\vartheta^{[n]}: S_k(\G) \to S_{k+2n}(\G)$.
Then an induction argument (or combining equalities (56) and (65) in \cite{BruinierGeerHarderZagierRanestad:2008a}) yields
\begin{equation}\label{partmf}
\partial^m f = \sum_{r=0}^m \frac{m!}{r!} \binom{m+k-1}{r+k-1} \left(\frac{E^*_2}{12}\right)^{m-r}
\vartheta^{[r]}f.
\end{equation}
To compute $\vartheta^{[n]} f$ explicitly we write it as a polynomial in $Q$ and $R$. Recall that any element of $S_{k+2n}(\G)$ may be expressed as a linear combination of terms of the form $R^a Q^b$ with $6a+4b=k+2n$. Set
$$
\mathcal A_n(f) := 12^n \vartheta^{[n]} f
$$
with $\mathcal A_n(f)$ a polynomial in $Q$, $R$ as described above. The factor $12^n$ is included to ensure that we obtain integer coefficients.
Then, using results going back to Ramanujan \cite[p. 88]{BruinierGeerHarderZagierRanestad:2008a},
we have
\begin{equation}\label{ramanujan-identities}
\dd E_2=\frac{E_2^2-Q}{12},\quad
\dd Q=\frac{E_2Q-R}{3},\quad
\dd R=\frac{E_2R-Q^2}{2},
\end{equation}
so that
$$
\vartheta \mathcal A_n(f) = \left(\frac{-R}{3}\frac{\partial}{\partial Q} - \frac{Q^2}{2}\frac{\partial}{\partial R} \right) \mathcal A_n(f)
$$
and we obtain the recursion
$$
\mathcal A_0(f)=f, \ \ \mathcal A_1(f) = 12\vartheta f, \ \ \mathcal A_{n+1}(f)=12\vartheta \mathcal A_n(f) - n(n+k-1) Q \mathcal A_{n-1}(f).
$$
We are interested in the special case $f=\Delta = (Q^3-R^2)12^{-3}$. We have $\vartheta \Delta =0$ since it is in $S_{14}(\G)$ and it follows that $\Delta$ is a factor of every $\mathcal A_n(\Delta)$. Write $\mathcal A_n(\Delta)=\Delta \cdot \mathcal B_n$ with $\mathcal B_n$ necessarily satisfying the recursion \eqref{pn1}.
 We obtain the theorem  with (\ref{czm}), (\ref{partmf}) and the identity
 $$
  \vartheta^{[n]}\Delta = 12^{-n} \Delta \cdot \mathcal B_n.
 $$
\end{proof}

To write our results in terms of a polynomial in only one variable, see \cite[p. 88]{BruinierGeerHarderZagierRanestad:2008a},
we observe that
every term in $\mathcal B_n$ contains a product of the form
$$
R^a Q^b = \left(\frac{R}{Q^{3/2}} \right)^a Q^{n/2}.
$$
So, assuming that $Q \neq 0$, we may write
\begin{equation}\label{pn2}
\mathcal B_n = Q^{n/2} p_n(R Q^{-3/2}).
\end{equation}
Substituting (\ref{pn2}) into (\ref{pn1}), we see that $p_n(t) \in \Z[t]$ satisfies the recursion
\begin{equation}\label{prec}
p_0=1, \ \ p_1 = 0, \ \ p_{n+1} = - 2n t p_n + 6(t^2-1) p'_n - n(n+11) p_{n-1} \qquad (n \gqs 1).
\end{equation}

Alternatively, for $R \neq 0$
write each term
$
Q^a R^b = \left(\frac{Q}{R^{2/3}} \right)^a R^{n/3}
$
so that
\begin{equation} \label{pn3}
\mathcal B_n = R^{n/3} q_n(Q R^{-2/3})
\end{equation}
with $q_n(t)\in \Z[t]$ satisfying the recursion
\begin{equation}\label{qrec}
q_0=1, \ \ q_1 = 0, \ \ q_{n+1} = - 2n t^2 q_n + 4(t^3-1) q'_n - n(n+11)t q_{n-1} \qquad (n \gqs 1).
\end{equation}

\section{Fourier developments at CM points} \label{cma}
Let $D$ be a negative fundamental discriminant as before and recall the notation $\mathfrak z_D$ from \eqref{zzd}. Define the Chowla-Selberg period
$$
\Omega_D:=\frac{1}{\sqrt{2\pi |D|}} \left[ \prod_{j=1}^{|D|-1} \G(j/|D|)^{\left(\frac Dj\right)}\right]^{\frac 1{2h(D)}}
$$
for $D<-4$ and also for $D=-3,-4$ but with the class number $h(D)$ replaced by $1/3$, $1/2$ respectively. Clearly $\Omega_D \in \R_{>0}$.

\subsection{Fourier expansion of $\Delta$ at $i$}\label{i}
We are in the case $D=-4$ with $\mathfrak z_{-4} = i$ and
$$
\Omega_{-4}=\frac1{2\sqrt{2\pi}}\left[\frac{\G(1/4)}{\G(3/4)}\right] = \frac{\G(1/4)^2}{4\pi^{3/2}} \approx 0.5902.
$$
Using the table on p. 87 of \cite{BruinierGeerHarderZagierRanestad:2008a}\footnote{The table entry $(3-\sqrt{5})/2$ for $D=-15$ should read $(-3+\sqrt{5})/2$.},
$$
E^*_2(i)=0, \ \ Q(i)= 12 \Omega_{-4}^4, \quad R(i)= 0, \quad \Delta(i)=\Omega_{-4}^{12}.
$$
Therefore, with Theorem \ref{taylor} and (\ref{pn2})
\begin{equation}\label{expr at i}
c_i(\Delta,n)= -|D|^3 \Delta(i)\left(\frac{-2\pi \Omega_{D}^{2}}{\sqrt{3}}\right)^n  \frac{p_n(0)}{n!}  \qquad (n \gqs 0)
\end{equation}
with $p_{n}(0) \in \Z$.
For example, with $0\leqslant n <12$ the numbers $p_{n}(0)$ are
$$
1,\ 0,\  -12,\ 0,\  216,\ 0,\  10368,\ 0,\  -2052864,\ 0,\  47029248,\ 0.
$$

We are already now in a position to prove part of Theorem \ref{ell lehmer}: that $c_i(\Delta,n)\neq 0$ for all  even $n$.
\begin{prop}\label{mod5}
With $p_{n}(t)$ defined as above by (\ref{prec}) we have
\begin{eqnarray*}
  p_{2m}(0) & \equiv & 1\mod 5 \quad \text{ for $m$ even}, \\
  p_{2m}(0) & \equiv & 3\mod 5 \quad \text{ for $m$ odd}.
\end{eqnarray*}
\end{prop}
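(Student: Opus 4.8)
The plan is to reduce the recursion \eqref{prec} modulo $5$ and to exploit the fact that, although $\deg p_n$ grows with $n$, the data we actually need lives in a finite set. The crucial observation is that in characteristic $5$ one has $\frac{d}{dt}t^5 = 5t^4 = 0$, so differentiation $f \mapsto f'$ descends to the $5$-dimensional $\mathbb{F}_5$-algebra $V := \mathbb{F}_5[t]/(t^5)$, while multiplication by the fixed polynomials $t$ and $t^2-1$ descends trivially. Writing $\bar p_n$ for the image of $p_n$ in $V$ and noting that the coefficients $-2n$ and $-n(n+11)\equiv -n(n+1)\pmod 5$ in \eqref{prec} depend only on $n\bmod 5$, I get a recursion for $\bar p_n$ in which the triple $(\bar p_n,\bar p_{n-1},\,n\bmod 5)$ evolves inside the finite set $V\times V\times(\Z/5\Z)$. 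Since $p_n(0)\bmod 5$ is precisely the constant term of $\bar p_n$, the whole problem reduces to understanding this finite dynamical system. (As a sanity check, this matches the reformulation $p_n(0)=\mathcal B_n(1,0)$ coming from \eqref{pn2}, which is natural mod $5$ because $Q=E_4\equiv 1$.)

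Next I would simply iterate. Starting from $\bar p_0 = 1$, $\bar p_1 = 0$ and running \eqref{prec} inside $V$, one computes $\bar p_0,\dots,\bar p_{21}$ and verifies that $(\bar p_{20},\bar p_{21}) = (\bar p_0,\bar p_1) = (1,0)$; since $20\equiv 0\pmod 5$, the coefficients governing the continuation agree as well, so the sequence $(\bar p_n)_{n\gqs 0}$ is \emph{purely} periodic of period $20$. Reading off the constant terms $\bar p_n(0)$ over one period $n=0,1,\dots,19$ produces the pattern $1,0,3,0$ repeated (that is, $p_n(0)\equiv 1,0,3,0\pmod 5$ according to $n\bmod 4$). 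Restricting to even indices $n=2m$ then gives exactly $p_{2m}(0)\equiv 1\pmod 5$ for $m$ even and $p_{2m}(0)\equiv 3\pmod 5$ for $m$ odd, which is the assertion of Proposition \ref{mod5}.

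The one point that genuinely needs care — and which I regard as the main obstacle — is the step from eventual periodicity to \emph{pure} periodicity. A bare pigeonhole argument on the finite state space only yields eventual periodicity, which would leave the small-$m$ behaviour in doubt; worse, the forward map is not invertible, since the coefficient $-n(n+11)$ of $p_{n-1}$ vanishes mod $5$ exactly when $n\equiv 0,4\pmod 5$, so one cannot run the recursion backwards to force pure periodicity abstractly. This is why the explicit return of the full state to $(\bar p_0,\bar p_1)$ at $n=20$ is indispensable: it pins the period to $20$ and guarantees the pattern holds from $n=0$ onward. That the period $20$ is the least common multiple of the period $5$ of the recursion coefficients and the observed period $4$ of the constant-term sequence serves as a useful internal consistency check. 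This computation is the $l=5$, $\mathfrak z=i$ instance of the general periodicity phenomenon recorded in Theorem \ref{periodic} and Remark \ref{do they all vanish}.
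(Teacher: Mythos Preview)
Your proposal is correct and follows essentially the same approach as the paper: reduce the recursion \eqref{prec} modulo $5$, use that the coefficients depend only on $n\bmod 5$, compute until the state returns to $(\bar p_0,\bar p_1)=(1,0)$ at $n=20$, and read off the constant terms. The only organizational difference is that you work in $\mathbb F_5[t]/(t^5)$ from the outset, whereas the paper computes the full polynomials $p_n(t)\bmod 5$ in the proof and relegates the truncation-by-$t^5$ observation to the subsequent Remark~\ref{simplification}; your discussion of why pure (not merely eventual) periodicity requires the explicit check is a point the paper leaves implicit.
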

\begin{proof}
Calculating the polynomials $p_n(t)$ modulo $5$ for $0 \leqslant n <20$, we find:
\begin{equation}\label{arr}
\begin{array}{llll}
1, & 0, & 3, & 3 t, \\
1, & 2 t, & 3 + 2 t^2, \ \ \ & t, \\
1, & 2 t, & 3 +  t^2, & 3 t + 2 t^3, \\
1 + 4 t^2 + 2 t^4, \ \ \ & 2 t^3, & 3 +  t^2, & 4 t + 4 t^3, \\
1 + 2 t^2 + 2 t^4, & t + 4 t^3 + 4 t^5, \ \ \ & 3 + 3 t^2 + 4 t^4 + 4 t^6, & 4 t + 4 t^3.
\end{array}
\end{equation}
The recursion (\ref{prec}) modulo $5$ is
\begin{equation}\label{recl}
p_{n+1}(t) \equiv -2n t p_n(t) +(t^2-1)p'_n(t) -n(n+1)p_{n-1}(t)
\end{equation}
and we see that it only depends on $n \bmod 5$. Computing the next two terms we find
$p_{20}(t) \equiv 1$ and  $p_{21}(t) \equiv 0$. Thus we must have $p_{n+20}(t) \equiv p_n(t) \bmod 5$ and the result follows.
\end{proof}

\begin{remark}\label{simplification} {\em A simplification we will need later is possible. From the recursion \eqref{recl} we see that the constant term of $p_{n+1}(t)$ depends on the constant terms of $p_{n}(t)$ and $p_{n-1}(t)$. It may also depend on the coefficient of $t$ in $p_{n}(t)$ because of the term $-p_n'(t)$ in \eqref{recl}. In turn the coefficient of $t$ may depend on a previous coefficient of $t^2$, and so on. But if we are working mod $5$ the coefficients of $t^5$ and higher powers of $t$ cannot affect the constant term of subsequent $p_{n}(t)$s because $(d/dt) t^5 \equiv 0 \bmod 5$. Thus the terms $4 t^5$ and $4 t^6$ in \eqref{arr} do not play a role and may be ignored.}
\end{remark}
Accordingly, we define the ring
\begin{equation}\label{ring}
 \mathcal R_l:= (\Z/l\Z)[t]/ (t^l)
\end{equation}
 of polynomials of degree at most $l-1$ with coefficients in $\Z/l\Z$. Denote the natural projection $\Z[t] \to \mathcal R_l$ by $p \mapsto \overline p$.
Thus we have proved that if we define a recursion
\begin{equation*}
p^*_0=1, \ \ p^*_1 = 0, \ \ p^*_{n+1} \equiv - 2n t p^*_n + 6(t^2-1) (p^*_n)' - n(n+11) p^*_{n-1}
\end{equation*}
in $\mathcal R_l$ then
$$
\overline{p}_n \equiv p^*_n \text{ \ in \ } \mathcal R_l
$$
and
$$
p_n(0) \equiv \overline{p}_n(0) \equiv p^*_n(0) \mod l.
$$

\subsection{Fourier expansion of $\Delta$ at $\omega$} \label{w}
We have $D=-3$ with $\mathfrak z_{-3} =(1+i \sqrt{3})/2 = \omega+1$ and
$$
\Omega_{-3}=\frac1{\sqrt{6\pi}}\left[\frac{\G(1/3)}{\G(2/3)}\right]^{3/2} = \frac{\root 4 \of 3 \G(1/3)^3}{4\pi^{2}} \approx 0.6409.
$$
Also
$$
E^*_2(\omega)=0, \quad Q(\omega)= 0, \quad R(\omega)= 24\sqrt{3} \Omega_{-3}^6, \quad \Delta(\omega)=-\Omega_{-3}^{12}.
$$
With Theorem \ref{taylor} and (\ref{pn3})
\begin{equation}\label{expr at w}
c_\omega(\Delta,n)= -|D|^3 \Delta(\omega)\Bigl(-\pi \Omega_{D}^{2}\Bigr)^n  \frac{q_n(0)}{n!} \qquad (n \gqs 0)
\end{equation}
with $q_{n}(0) \in \Z$.
For example, with $0\leqslant n <15$ the numbers $q_{n}(0)$ are
$$
1,\ 0,\ 0,\ 48,\ 0,\ 0,\ 18432,\ 0,\ 0,\ 13271040,\ 0,\ 0,\ 1974730752,\ 0,\ 0.
$$
\begin{prop} \label{mod7}
With $q_{n}(t)$ defined as above by  (\ref{qrec}), we have
\begin{eqnarray*}
  q_{3m}(0) & \equiv & 1\mod 7 \quad \text{ for $m$ even}, \\
  q_{3m}(0) & \equiv & 6\mod 7 \quad \text{ for $m$ odd}.
\end{eqnarray*}
\end{prop}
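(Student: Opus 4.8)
The plan is to mirror the proof of Proposition~\ref{mod5} line for line, working modulo $7$ with the recursion \eqref{qrec} in place of \eqref{prec}. Reducing \eqref{qrec} mod $7$ and using $11 \equiv 4 \bmod 7$, so that $n(n+11) \equiv n(n+4)$, gives
\[
q_{n+1}(t) \equiv -2n t^2 q_n(t) + 4(t^3-1)q_n'(t) - n(n+4)\,t\,q_{n-1}(t) \bmod 7,
\]
whose coefficients depend only on $n \bmod 7$.

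Next I reduce to the finite ring $\mathcal R_7 = (\Z/7\Z)[t]/(t^7)$ of \eqref{ring}, exactly as in Remark~\ref{simplification}. Writing $4(t^3-1)q_n' = 4t^3 q_n' - 4 q_n'$, every term of the recursion raises the degree except $-4q_n'$, whose contribution to the coefficient of $t^j$ is $-4(j+1)$ times the coefficient of $t^{j+1}$ in $q_n$. At the critical degree $j=6$ this factor is $-4\cdot 7 \equiv 0$, so coefficients of $t^7$ and higher never feed down to the constant term modulo $7$ and may be discarded. Hence the recursion can be run inside $\mathcal R_7$ and $q_n(0) \equiv q^*_n(0) \bmod 7$, where $q^*_n$ denotes the image of $q_n$.

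The remaining work is a finite iteration: I would compute $q^*_0, q^*_1, q^*_2, \dots$ in $\mathcal R_7$ until the initial data reappears. Because the recursion coefficients have period $7$ in $n$, once I locate an $N$ with $N \equiv 0 \bmod 7$, $q^*_N \equiv 1$ and $q^*_{N+1} \equiv 0$, the recursion realigns and forces $q^*_{n+N} \equiv q^*_n$, hence $q_{n+N}(0) \equiv q_n(0) \bmod 7$, for all $n \gqs 0$. Any such period must be divisible by $7$ (for this realignment) and by $3$ (since $q_N(0) \equiv q_0(0) = 1 \not\equiv 0$ forces $3 \mid N$ by the trivial vanishing of $q_n(0)$ for $3 \nmid n$), and must accommodate the observed alternation of the subsequence $q_{3m}(0)$; the smallest compatible value is $N=42$, which I expect to be the true period. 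Reading the constant terms $q_{3m}(0) \bmod 7$ over $0 \lqs n < 42$ --- matching the tabulated $q_0(0), q_3(0), q_6(0), q_9(0), \dots \equiv 1,6,1,6,\dots$ --- then delivers $q_{3m}(0) \equiv 1$ for even $m$ and $q_{3m}(0) \equiv 6$ for odd $m$.

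I expect the only delicate point to be the truncation: one must verify that the factor attached to the sole degree-lowering term $-4q_n'$ is genuinely divisible by $7$ at $j=6$, legitimising the passage to $\mathcal R_7$. After that the argument is a bounded, mechanical iteration of a polynomial recursion over a finite ring with no conceptual obstruction; the only real risk is an arithmetic slip among the forty-two reductions.
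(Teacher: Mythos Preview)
Your proposal is correct and follows the paper's own proof essentially line for line: reduce the recursion \eqref{qrec} modulo $7$, pass to $\mathcal R_7$ via the truncation argument of Remark~\ref{simplification}, iterate until $\overline q_{42}(t)\equiv\overline q_0(t)$ and $\overline q_{43}(t)\equiv\overline q_1(t)$, and then read off the constant terms. The paper simply tabulates the forty-two polynomials in $\mathcal R_7$ (your ``mechanical iteration'') and observes the period-$42$ repetition directly; your additional remarks on why the period should be divisible by $3$ and $7$ are sound but not needed once the computation is done.
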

\begin{proof}
The polynomials $\overline{q}_{n}(t)$ in $\mathcal R_7$ for $0 \leqslant n < 42$ are
\begin{equation}\label{arr2}
\begin{array}{llllll}
\x 1, & \x 0, & \x 2t, & \x 6, & \x 6t^2, & \x 5 t, \\
\x 1 + t^3, & \x 5 t^2, & \x 2 t + 5 t^4, & \x 6 + 4 t^3, & \x 2 t^2, & \x 5 t + 4 t^4, \\
\x 1 + 6 t^3 + 4 t^6, & \x t^2 + 2 t^5, & \x 2 t + 2 t^4, & \x 6 + 4 t^3 + 4 t^6, & \x 4 t^5 + 4 t^8, & \x 5 t + 5 t^4, \\
\x 1 + t^3 + t^6, & \x 2 t^2 + 2 t^5, & \x 2 t + 2 t^4, & \x 6, & \x 0, & \x 5 t, \\
\x 1, & \x t^2, & \x 2 t, & \x 6 + 6 t^3, & \x 2 t^2, & \x 5 t + 2 t^4, \\
\x 1 + 3 t^3, & \x 5 t^2, & \x 2 t + 3 t^4, & \x 6 + t^3 + 3 t^6, & \x 6 t^2 + 5 t^5, & \x 5 t + 5 t^4, \\
\x 1 + 3 t^3 + 3 t^6, & \x 3 t^5 + 3 t^8, & \x 2 t + 2 t^4, & \x 6 + 6 t^3 + 6 t^6, & \x 5 t^2 + 5 t^5, & \x 5 t + 5 t^4.
\end{array}
\end{equation}
We find that $\overline{q}_{42}(t) \equiv \overline{q}_0(t)$ and $\overline{q}_{43}(t) \equiv \overline{q}_1(t)$ in $\mathcal R_7$. Therefore $\overline{q}_{n+42}(t) \equiv \overline{q}_n(t)$ and the proof is complete.
\end{proof}

It follows from Proposition \ref{mod7} that all non-trivial Fourier coefficients of $\Delta$ at $\omega$ are non-zero. The modulus $l=7$ was the smallest possible to show all $q_{3m}(0) \not \equiv 0 \bmod l$, but we could also have used $l=13$, $19$ and $43$. Similarly $p_{2m}(0) \not \equiv 0 \bmod l$ for $l=13,37,41$  as well as $5$ above. Note that the  $5$  rows in the array \eqref{arr} and the $7$ rows in \eqref{arr2} are examples of the pattern we see in Proposition \ref{qper}.

\subsection{Fourier expansion of $\Delta$ at other CM points}
Let $\z=\z_D$ be a CM point of discriminant $D <-4$. We continue with the method outlined by Zagier in \cite[p. 88]{BruinierGeerHarderZagierRanestad:2008a}.
 Using the
results and normalization from \cite[p. 86 - 87]{BruinierGeerHarderZagierRanestad:2008a}, we may let
$$
E_2^*(\z)=k_1\abs{D}^{-1/2}\Omega_{D}^2, \quad
Q(\z)=k_2\Omega_{D}^4,\quad R(\z)=k_3\abs{D}^{1/2}\Omega_{D}^6
$$
for non-zero $k_1, k_2, k_3$ in some number field $K = K_\z$. There exists $k_0 \in \Z_{>0}$ such that
$$
k_0k_1, \ k_0k_2, \ k_0 k_3 |D| \ \in \
\mathcal{O}_K.
$$
Choose now $m_1, m_2\in \mathcal{O}_K$ such that
\begin{equation}
  \label{should-have-zero}
  \left(E_2^*-\frac{m_1}{m_2}\frac{R}{Q}\right)(\z)=0.
\end{equation}
To make the choice definite, set  $m_1=k_0^2 k_1k_2$ and
$m_2=k_0^2\abs{D}k_3$, but with any common factors in $\Z_{>1}$ removed.
Lastly, define these elements of $\O_K[t]$:
\begin{eqnarray*}
% \nonumber to remove numbering (before each equation)
  a_1(t) &:=& 12 k_0^4 k_2 m_1 \abs{D}(t+k_3), \\
  a_2(t) &:=& 2 k_0^4 k_2 (m_1-m_2) \abs{D}(t+k_3), \\
  a_3(t) &:=& 6 k_0^4  m_2 \bigl( k_2 \abs{D}(t+k_3)^2-k_2^4\bigr), \\
  a_4(t) &:=& -k_0^8 k_2^5 m_2 (m_2-6m_1)\abs{D}-k_0^8 k_2^2 m_1 (4m_2+m_1)\abs{D}^2(t+k_3)^2.
\end{eqnarray*}
We are now ready to state
\begin{theorem} \label{cmpoint}
For a CM point $\z=\z_D$ with $D<-4$, define $K$, $k_i$, $m_i$ and $a_i(t)$ as above. We have
\begin{equation}\label{q-to-c}
c_{\z}(\Delta,n)=-\abs{D}^3 \Delta(\z) \left(\frac{-\pi \Omega_D^{2}}{6k_0^4 k_2^2 m_2 }\right)^n \frac{q_n(0)}{n!} \qquad (n \gqs 0)
\end{equation}
with $q_n \in \O_K[t]$ satisfying the recurrence
\begin{equation}\label{rqe}
\begin{split}
q_{0, \z}(t)={}& 1, \quad q_{1, \z}(t)=a_{1}(t), \\
q_{n+1,\z}(t)={}&(a_{1}(t)+n \,a_{2}(t))q_{n,\z}(t)+a_{3}(t)q'_{n,\z}(t)+n(n+11)a_{4}(t)q_{n-1,\z}(t) \qquad(n \gqs 1).
\end{split}
\end{equation}
\end{theorem}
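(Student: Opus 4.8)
The plan is to specialize the general identity of Theorem~\ref{taylor} to the CM point $\z=\z_D$ and then collapse the resulting weighted-homogeneous expression in $Q,R$ to the one-variable recursion \eqref{rqe}. First I would evaluate Theorem~\ref{taylor} at $z=\z$. Since $\Im\z_D=\sqrt{|D|}/2$ for both parities of $D$, we have $(\z-\overline\z)^{n+6}=(i\sqrt{|D|})^{n+6}$, and inserting the stated CM values of $E_2^*,Q,R,\Delta$ at $\z$ yields
\[
c_\z(\Delta,n)=-|D|^3\Delta(\z)\Bigl(\tfrac{-\pi\sqrt{|D|}}{6}\Bigr)^n S_n,\qquad S_n:=\sum_{r=0}^n\frac{1}{r!}\binom{n+11}{r+11}E_2^*(\z)^{\,n-r}\,\mathcal B_r(\z),
\]
with $\mathcal B_r$ the weight-$2r$ form of \eqref{pn1}. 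One checks this specializes correctly: at $z=i,\omega$ the sum collapses to its $r=n$ term and reproduces \eqref{expr at i}, \eqref{expr at w}. This reduces the theorem to showing that a suitable renormalization $q_n(0)=n!\,(\cdots)^nS_n$ is the constant term of the polynomial defined by \eqref{rqe}, and that the two transcendental periods in the CM values conspire to leave only the single power $\Omega_D^{2n}$.

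Second I would remove $E_2^*$ using \eqref{should-have-zero}: replacing each $E_2^*(\z)$ by $(m_1/m_2)(R/Q)(\z)$ leaves $S_n$ unchanged but makes every summand weighted-homogeneous of weight $2n$ in $Q(\z),R(\z)$. Writing $\mathcal B_r=Q^{r/2}p_r(RQ^{-3/2})$ as in \eqref{pn2} and $(R/Q)^{n-r}=(RQ^{-3/2})^{n-r}Q^{(n-r)/2}$, the factor $Q(\z)^{n/2}$ pulls out of $S_n$ and the remainder becomes a polynomial in the single ratio $v=R(\z)Q(\z)^{-3/2}$. I would then reparametrize $v$ affinely so that $\z$ becomes $t=0$; the shift by $k_3$ present throughout the $a_i$ reflects the fact that $R(\z)Q(\z)^{-3/2}=k_3|D|^{1/2}k_2^{-3/2}$, and the factors of $k_0$ and $|D|$ are inserted to keep the coordinate change defined over $\O_K$.

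The heart of the argument is to prove that these normalized polynomials $q_{n,\z}(t)$ obey \eqref{rqe}. The three-term shape and the factor $n(n+11)$ descend directly from the recursion \eqref{pn1} for $\mathcal B_n$ (equivalently \eqref{modified_serre} for $\vartheta^{[n]}$); the derivative term $a_3(t)q_{n,\z}'(t)$ is the image of the Serre-derivative piece $\vartheta=-\tfrac R3\partial_Q-\tfrac{Q^2}{2}\partial_R$ acting in the single variable, so that $a_3$ generalizes the coefficient $6(t^2-1)$ of \eqref{prec}; and the multiplication term $a_1(t)+n\,a_2(t)$ combines the reduction of the $-2nt$ term of \eqref{prec} (giving the $n$-linear part) with the collapse of the $E_2^*$-power sum after \eqref{should-have-zero} (giving the new constant-in-$n$ part, which is why $a_1$ carries the factor $m_1\propto k_1$). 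I would pin down the exact coefficients by matching against the Ramanujan identities \eqref{ramanujan-identities} and the relations defining $m_1,m_2$, and then conclude integrality: the constants $k_0,m_1,m_2,|D|$ are chosen precisely so that each $a_i(t)\in\O_K[t]$, whence \eqref{rqe}, which only adds, multiplies and differentiates, keeps $q_{n,\z}(t)\in\O_K[t]$ for all $n$ by induction.

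I expect this third step to be the main obstacle: verifying that the reduction closes up with \emph{exactly} the coefficients $a_1,\dots,a_4$ rather than up to some stray $n$-dependent factor. The delicate bookkeeping is to confirm that the half-integer powers of $Q(\z)$ (and of $k_2$) produced by \eqref{pn2} cancel against the chosen normalization, that the differentiation arising from $\vartheta$ lands on $t$ with the stated coefficient $a_3(t)=6k_0^4m_2(k_2|D|(t+k_3)^2-k_2^4)$, and that the resulting coefficients are genuine algebraic integers; getting these constants right is where the real work lies.
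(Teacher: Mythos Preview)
Your plan is coherent and would work, but it takes a different and more laborious route than the paper. The paper does \emph{not} start from Theorem~\ref{taylor} and attempt to collapse the full sum $S_n=\sum_{r}\frac{1}{r!}\binom{n+11}{r+11}(E_2^*)^{n-r}\mathcal B_r$ at $\z$. Instead it introduces a second modified Serre derivative $\vartheta_\phi$, with $\phi=\frac1{12}(E_2-\frac{m_1}{m_2}\frac{R}{Q})$ chosen precisely so that the quasi-modular defect $E_2^*/12-\phi$ vanishes at $\z$ by \eqref{should-have-zero}. The generalization of \eqref{partmf} then reads $\partial^m f=\sum_r\binom{\cdots}{\cdots}(E_2^*/12-\phi)^{m-r}\vartheta_\phi^{[r]}f$, and at $z=\z$ this collapses to the \emph{single} term $(\partial^n\Delta)(\z)=(\vartheta_\phi^{[n]}\Delta)(\z)$. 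A short induction (Lemma~\ref{recurrence}) then shows $(12m_2)^n\vartheta_\phi^{[n]}\Delta=\Delta\,Q^{n/2}p_n(RQ^{-3/2})$ with $p_n$ already satisfying the desired three-term recursion; the change to $q_n$ is then a mere affine rescaling.

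The main difference is that your ``the recursion descends directly from \eqref{pn1}'' is not quite accurate: the recursion \eqref{pn1} is in the summation index $r$, whereas you need a recursion in the outer index $n$ of $S_n$. Converting one into the other is exactly the content of the identity $\vartheta_\phi^{[n]}=\sum_r\frac{n!}{r!}\binom{n+k-1}{r+k-1}(\phi-E_2/12)^{n-r}\vartheta^{[r]}$, which is implicit in the paper's two versions of the $\partial^m$-expansion. Your route would have to prove this combinatorial identity (or an equivalent three-term relation for the sum $S_n$) by hand. The paper's route avoids that entirely: by building $\phi$ into the operator from the start, the sum never appears and the recursion falls out of one induction computing $\vartheta_\phi Q$, $\vartheta_\phi(RQ^{-3/2})$ and $\vartheta_\phi\Delta$ via \eqref{ramanujan-identities}. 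So the step you flag as the main obstacle is precisely the one the paper sidesteps with the $\vartheta_\phi$ device.
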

\begin{proof}
We let
$\phi:=\frac{1}{12}\left(E_2-\frac{m_1}{m_2}\frac{R}{Q}\right)$,
which we note is holomorphic except for a pole at the  zeros of $Q$,
i.e. at $\omega$, the third root of unity. Define
$\w:=\dd \phi-\phi^2$.
Using (\ref{ramanujan-identities}) it is
straightforward to verify that in terms of $E_2, Q$, and  $R$ the
function $\w$ is
given by
\begin{equation}\label{omega}
  \w=\frac{-Q}{144m_2^2}\left(m_2(m_2-6m_1)+(4m_1m_2+m_1^2)(R/Q^{3/2})^2\right).
\end{equation}

We next define a
derivative $\vartheta_{k,\phi}$ from \cite[p. 88]{BruinierGeerHarderZagierRanestad:2008a} by
 \begin{equation*}
  \vartheta_{k,\phi}f=\vartheta_{\phi}f:=\dd f-k\phi f.
\end{equation*}
%\label{eq:4}
 It is a further modification of Rodriguez Villegas and Zagier's \eqref{modified_serre} giving a mapping from the set of meromorphic modular forms of
weight $k$ to the space of meromorphic modular forms of weight $k+2$.

We note that $\vartheta_{k,\phi}f=\vartheta f +Vf$ where $V$ is
multiplication by $\frac{km_1}{12m_2}\frac{R}{Q}$. We then define
 a meromorphic modular form $\vartheta_\phi^{[n+1]}f$   of weight
 $k+2n$ from the recursion
\begin{equation*}
\vartheta_\phi^{[0]}f = f, \ \ \vartheta_\phi^{[1]}f = \vartheta_\phi f, \ \ \vartheta_\phi^{[n+1]}f = \vartheta_\phi ( \vartheta_\phi^{[n]} f) +n(n+k-1) \w \ \vartheta_\phi^{[n-1]}f.
\end{equation*}

\begin{lemma} \label{recurrence} For $z$ not a zero of $Q$ the following holds:
  \begin{equation*}
    (12m_2)^n\vartheta_\phi^{[n]} \Delta(z)=\Delta(z) Q^{n/2}(z) p_n\bigl(R(z)/Q(z)^{3/2}\bigr),
  \end{equation*}
where the polynomials $p_n(t)\in \O_K[t]$ are given by the recursion
\begin{align*}
p_0(t)&=1, \ \ p_1(t) =12 m_1 t , \\ p_{n+1}(t) &= (2(m_1-m_2)n+12m_1)tp_n(t)+6m_2(t^2-1)p_n'(t)\\& \quad -n(n+11)\Bigl[m_2(m_2-6m_1)+(4m_1m_2+m_1^2)t^2\Bigr] p_{n-1}(t).
\end{align*}
\end{lemma}
\begin{proof}
This is a straightforward (if long) induction using that $\vartheta_\phi$ is a
derivation, (\ref{omega}), and the identities
\begin{equation*}
  \vartheta_\phi(\Delta)=(\vartheta+V)(\Delta)=V\Delta=\frac{m_1}{m_2}Q^{1/2}\frac{R}{Q^{3/2}}\Delta,
\end{equation*}
\begin{equation*}
  \vartheta_\phi(Q)=Q^{3/2}\frac{m_1-m_2}{3m_2}\frac{R}{Q^{3/2}},
\end{equation*}
\begin{equation*}
  \vartheta_\phi(R/Q^{3/2})=\frac{Q^{1/2}}{2}\left(-1+\left(\frac{R}{Q^{3/2}}\right)^2\right).
\end{equation*}
\end{proof}
The generalization of \eqref{partmf} is then
\begin{equation*}\label{partmf2}
\partial^m f(z) = \sum_{r=0}^m \frac{m!}{r!} \binom{m+k-1}{r+k-1} \left(\frac{E_2^*}{12}-\frac{m_1 R}{12 m_2 Q}\right)^{m-r}
\vartheta^{[r]}_\phi f(z)
\end{equation*}
and at $z=\z$ we
obtain, by the requirement (\ref{should-have-zero}),
\begin{equation}\label{only-one-term}(\partial^n f)(\z)= (\vartheta_\phi ^{[n]}f )(\z).
\end{equation}

We have $R(\z)/Q(\z)^{3/2}=\frac{\sqrt{k_2 \abs{D}}}{k_2^2}k_3$.
It
turns out to be convenient to change to a different set of polynomials,
namely
\begin{equation}\label{k00}
q_n(t):=\Bigl(k_0^4 k_2^2 \sqrt{k_2\abs{D}}\Bigr)^n p_n\left(\frac{\sqrt{k_2\abs{D}}}{k_2^2}(t+k_3)\right)
\end{equation}
so that
\begin{equation}
\label{translated-to-zero}q_n(0)=\Bigl(k_0^4 k_2^2 \sqrt{k_2\abs{D}}\Bigr)^n p_n(R(\z)/Q(\z)^{3/2}).
\end{equation}
Since
$p_n(t)$ has degree at most $n$ and contains only even powers of $t$ if $n$ is even and only odd
powers if $n$ is odd, we see (also using the definition of  $k_0$) that $q_n(t) \in \O_K[t]$. (In fact $k_0^4$ may be replaced by $k_0^3$ in \eqref{k00} and we still have $q_n(t) \in \O_K[t]$, but it complicates the exposition slightly.)

The recurrence relations of $p_n$ from Lemma \ref{recurrence} translate into relations for $q_n$ in \eqref{rqe}.
From (\ref{czm}), (\ref{only-one-term}), Lemma \ref{recurrence},
and (\ref{translated-to-zero}) we deduce \eqref{q-to-c}.
\end{proof}

As a consistency check, we may  verify that Theorem \ref{cmpoint} agrees with Proposition \ref{bim}.  We obtain
\begin{equation}\label{test}
c_{z}(\Delta,m)= (2i y)^6\sum_{n=1}^\infty \tau(n) q^n \sum_{r=0}^m \binom{m+11}{r+11} \frac{(-4\pi n y)^{r}}{r!}
\end{equation}
easily from Proposition \ref{bim}. Then with $D=-20$, for example, \eqref{q-to-c} and \eqref{test} both give numerically
$$
\Delta|_{12}\sigma_{\z_{-20}}(z) = -0.0063+0.1019z -0.6803z^2+ 2.3012z^3 -3.4187z^4+O(z^5).
$$
See Example \ref{d20a} below.

We may now prove our main result.
\begin{proof}[Proof of Theorem \ref{mainthm}]
Assembling our formulas from this section we see that in the case $D=-4$ we have $\z=\z_D=i$ and
\begin{equation}\label{tyb}
c_\z(\Delta,m) = \kappa_{\z}\frac{\lambda_{\z}^m}{m!}q_{m, \z}(0) \qquad(m \gqs 0)
\end{equation}
for  $\kappa_{\z}=-\abs{D}^3 \Delta(i)$, $\lambda_{\z}=-2\pi \Omega_D^2/\sqrt{3}$, by \eqref{expr at i}, and $q_{m, \z}(t) \in \O_K[t]$ with $K=\Q$ given by the recursion \eqref{simplified-writing} for $a_1=0$, $a_2=-2t$, $a_3=6(t^2-1)$ and $a_4=-1$ using \eqref{prec}.

In the case $D=-3$ we have $\z=\z_D=\omega+1$ and \eqref{tyb} holds
for  $\kappa_{\z}=-\abs{D}^3 \Delta(\omega)$, $\lambda_{\z}=-\pi \Omega_D^2$, by \eqref{expr at w}, and $q_{m, \z}(t) \in \O_K[t]$ with $K=\Q$ given by the recursion \eqref{simplified-writing}  with $a_1=0$, $a_2=-2t^2$, $a_3=4(t^3-1)$ and $a_4=-t$ using \eqref{qrec}.

Finally, for $D$ a negative fundamental discriminant $< -4$, (and hence $\z_D$ not a zero of $Q$ or $R$), \eqref{tyb} holds for $\z=\z_D$ by Theorem \ref{cmpoint} with $\kappa_{\z}=-\abs{D}^3 \Delta(\z)$, $\lambda_{\z}= -\pi \Omega_D^{2}/(6k_0^4 k_2^2 m_2)$ and the other constants as defined there.
\end{proof}

\subsection{Examples}
We demonstrate how Theorem \ref{cmpoint} may be used to prove non-vanishing of $c_{\z_{D}}(\Delta,n)$, in the same manner as Propositions \ref{mod5} and \ref{mod7},  for the seven remaining fundamental discriminants $D \in \mathscr D$.
Generalizing \eqref{ring}, let
\begin{equation}\label{rlz}
\mathcal R_l = \mathcal R_{l,\z}:=(\O_K/l \O_K)[t]/(t^l)
\end{equation}
 and denote the natural projection from  $\O_K[t] \to\mathcal R_l$ by $p \mapsto \overline p$.

\begin{example}\label{d7}
{\em
$D=-7$: In this case we
 have $k_1=3$, $k_2=15$ and $k_3=27$. Then $m_1=5$,
$m_2=21$, $k_0=1$ and $K=\Q$.  The
recursion  for the  $q_n$  polynomials becomes
%\begin{align*}
%p_0(t)&=1, \ \ p_1(t) =60 t , \\ p_{n+1}(t) &= (-32n+60)tp_n(t)+126(t^2-1)p_n'(t)-n(n+11)(-189+445t^2)p_{n-1}(t)
%\end{align*}
% and
%$q_n(t)=\bigl(\sqrt{105}(15)^2\bigr)^np_n\left(\frac{\sqrt{105}}{15^2}(t+27)\right)$ satisfies the recursion
$q_0=1$, $q_1(t)=60\cdot 105\cdot (t+27)$,
\begin{align*}
q_{n+1}(t)=&105(-32n+60)(t+27)q_n(t)\\ & +126\cdot
(105(t+27)^2-15^4)q'_n(t)\\ &-n(n+11)105(-19845\cdot 15^4+445\cdot105^2\cdot 105\cdot(t+27)^2)q_{n-1}(t).
\end{align*}
Computing a few values we suspect that $q_n(0)\not\equiv 0 \bmod l$ when
$l=23$.
 Further calculations reveal that
\begin{align*}
\overline q_{265}(t) = \ & 8 \overline  q_{12}(t)=17+20t+3t^2+3t^3+16t^4\\
\overline q_{266}(t) = \ & 8 \overline  q_{13}(t)=13+6t+7t^2+9t^3+12t^4+19t^5.
\end{align*}
 This implies, since $265\equiv 12 \bmod l$,  that
\begin{equation}\label{265}
\overline
q_{n+(265-12)}(t)=8\overline q_n(t) \quad \text{for all} \quad n\gqs 12
\end{equation}
 and, since $8
\bmod 23$ is of order $11$,  that
$\overline{q}_{n+11(265-12)}(t)=\overline q_{n}(t)$
for $n\gqs 12$. It follows that $q_{n+2783}(0) \equiv q_n(0) \bmod 23$ for $n\gqs 12$ and a
computation shows that it holds also for $n=1,\ldots,11$.  On inspection we find
that   $q_n(0)\not\equiv 0 \bmod 23$ for $n<265$, and by \eqref{265}
this holds for all $n$, so we have proved that for every $n \in \Z_{\gqs 0}$
$$c_{\z_{-7}}(\Delta,n)\neq 0.$$
Working modulo $l=43$, $67$ or $79$ gives a similar proof of non-vanishing.
}
\end{example}

\begin{example}
{\em
$D=-8$: We have $k_1=4$,
$k_2=20$, $k_3=28$, with $m_1=5$, $m_2=14$, $k_0=1$, and $K=\Q$.  After some experimenting we guess that we should
look at $l=17$. We find that
\begin{align*}\overline q_{550}(t) =&2\overline  q_{278}(t)=6+11t+9t^2\\ \overline q_{551}(t) =&2\overline  q_{279}(t)=
 15+3t+5t^2+12t^3 \end{align*} which implies, since $550\equiv 278 \bmod l$, that
$\overline q_{n+(550-278)}(t)=2\overline q_n(t)$ for $n\gqs 278$. And since 2
mod 17 is of order 8 we have
$$\overline{q}_{n+8(550-278)}(t)=\overline q_{n}(t), $$
for $n\gqs 278$. It follows that $q_{n+2176}(0) \equiv q_n(0) \bmod 17$ for $n\gqs 278$, and by
inspection we see that it holds indeed for all $n>0$. By computing
$q_n(0) \bmod 17$ for every $n < 550$ and observing that these
values are all different from zero we conclude that $q_n(0)\not \equiv 0 \bmod 17$ for
all $n\gqs 0$. It follows that $c_{\z_{-8}}(\Delta,n)\neq 0$ for all $n$.
}
\end{example}

\begin{example}
{\em
$D=-11$: We have $k_1=8$,
$k_2=32$, $k_3=56$, $m_1=32$, $m_2=77$, $k_0=1$, and $K=\Q$. Computing mod $l=23$ we obtain
$\overline q_{n+253}(t) =14\overline
  q_{n}(t)$ for $n \gqs 12$ which implies, since $14
\bmod 23$ is of order $22$,   that $q_{n+5566}(0) \equiv q_n(0) \bmod 23$ for $n\gqs 12
$, and as before we may verify that $q_n(0)\not \equiv 0 \bmod 23$ and $c_{\z_{-11}}(\Delta,n)\neq 0$ for all $n$.
}
\end{example}

\begin{example}
{\em
$D=-15$: We have $k_1=6+3\sqrt{5}$,
$k_2=15+12\sqrt{5}$, $k_3=42+63/\sqrt{5}$ with
$m_1=13+30\sqrt{5}$, $m_2=70+21\sqrt{5}$, $k_0=1$, and
$K=\Q(\sqrt{5})$. Reducing mod $17 \O_K$ we find
$\overline{q}_{278}(t)=(13+10\sqrt{5})\overline{q}_{6}(t)$,
$\overline{q}_{279}(t)=(13+10\sqrt{5})\overline{q}_{7}(t),$ and
$278\equiv 6 \bmod 17$.   The
number $(13+10\sqrt{5})$ has order 144 mod $17 \O_K$
so $\overline{q}_n(t)=\overline q_{n+39168}(t)$  for $n \gqs 6$. We have $q_n(0) \not \equiv 0
\bmod 17 \O_K$ for all $n < 278$ so it follows  that $q_n(0) \not \equiv 0
\bmod 17 \O_K$ for all $n\gqs 0$.  Hence $c_{\z_{-15}}(\Delta,n)\neq 0$ for all $n$.
}
\end{example}

Recall that for $K=\Q(\sqrt{5})$ we have $\O_K = \Z\left[(1+\sqrt{5})/2\right]$. In the above example we are implicitly using the isomorphism
$$
\Z/l\Z \left[\sqrt{5}\right] \to \O_K/l\O_K \quad \text{given by} \quad a+b \sqrt{5} \mapsto a+b \sqrt{5} +l\O_K \qquad \text{($l$ odd)}.
$$

\begin{example}
{\em
$D=-19$: We have $k_1=8$,
$k_2=32$, $k_3=56$ with $m_1=32$, $m_2=57$, $k_0=1$, and $K=\Q$.
   If we reduce mod 7 we
find $\overline{q}_{21}(t)=4\overline{q}_{0}(t)=4$,
$\overline{q}_{22}(t)=4\overline{q}_{1}(t)=2+5t$, $21\equiv 0 \bmod l$, and 4 has order 3,
so $\overline{q}_n(t)=\overline q_{n+63}(t)$. Arguing as before, $c_{\z_{-19}}(\Delta,n)\neq 0$ for all $n$.
}
\end{example}

\begin{example} \label{d20a}
{\em
$D=-20$: We have $k_1=12+4\sqrt{5}$,
$k_2=40+12\sqrt{5}$, $k_3=72+63/\sqrt{5}$ so that
$m_1=45+19\sqrt{5}$, $m_2=90+28\sqrt{5}$, $k_0=1$, and
$K=\Q(\sqrt{5})$. Reducing mod $7 \O_K$ we find
$\overline{q}_{21}(t)=(4+6\sqrt{5})\overline{q}_{0}(t)$,
$\overline{q}_{22}(t)=(4+6\sqrt{5})\overline{q}_{1}(t)$, and $21\equiv 0 \bmod 7$.  The
number $(4+6\sqrt{5})$ has order 24 mod $7 \O_K$
so $\overline{q}_n(t)=\overline q_{n+504}(t)$. We have $q_n(0) \not \equiv 0
\bmod 7 \O_K$ for all $n < 21$ so it follows from the periodicity that $q_n(0) \not \equiv 0
\bmod 7 \O_K$ and $c_{\z_{-20}}(\Delta,n)\neq 0$ for all $n$.
}
\end{example}

\begin{example} \label{d20}
{\em
$D=-24$: We have $k_1=12+12\sqrt{2}$,
$k_2=60+24\sqrt{2}$, $k_3=84+72\sqrt{2}$ with
$m_1=9+7\sqrt{2}$, $m_2=14+12\sqrt{2}$, $k_0=1$, and
$K=\Q(\sqrt{2})$. Reducing mod $5 \O_K$ yields
$\overline{q}_{n+48}(t)=\overline{q}_{n}(t)$ for all $n \gqs 0$. We have $q_n(0) \not \equiv 0
\bmod 5 \O_K$ for all $n < 48$  and hence $c_{\z_{-24}}(\Delta,n)\neq 0$ for all $n$.
}
\end{example}

Note that in the above examples we have cases with
  class number $h(\Q(\z_{D}))=2$ (namely $D=-15, -20, -24$) and class number
  $h(\Q(\z_{D}))=1$ (the remaining cases).
With Propositions \ref{mod5}, \ref{mod7} and Examples \ref{d7} - \ref{d20} we have completed the proof of Theorem \ref{ell lehmer}.

\section{Arithmetic of Fourier coefficients at CM points}\label{arit}

\subsection{Periodicity} \label{Periodicity}
 As proved in  Theorem \ref{mainthm}, recall our main formula
\begin{equation*}
c_\z(\Delta,m) = \kappa_{\z}\frac{\lambda_{\z}^m}{m!}q_{m, \z}(0) \qquad (m \gqs 0)
\end{equation*}
for the $m$-th Fourier coefficient of $\Delta(z)$ at a CM point $\z \in \H$.
In
this section we examine the integers $q_{m,\z}(0)$ modulo $l \O_K$ in greater detail. For example,
when $l=5$ we have seen in Proposition \ref{mod5} that  $q_{m,\z_{-4}}(0) \bmod 5$ is
\begin{equation}\label{list1}
1, 0, 3, 0, 1, 0, 3, 0, 1, 0, 3, 0, 1, 0, 3, 0, 1, 0, 3, 0,  \cdots
\end{equation}
for $m \gqs 0$ with period $4$. For $l=7$ we have $q_{m,\z_{-4}}(0) \bmod 7$ equaling
\begin{equation}\label{list2}
1, 0, 2, 0, 6, 0, 1, 0, 5, 0, 0, 0, 4, 0, 0, 0, 4, 0, 0, 0, 2, 0, 0,
0, 0, 0, 0, 0, \cdots
\end{equation}
for $m \gqs 0$ with all further terms $\equiv 0 \bmod 7$. Hence $q_{m,\z_{-4}}(0) \bmod 7$ has period $1$ for $m \gqs 21$. We next prove that \eqref{list1}, \eqref{list2} are typical. (In what follows, by {\em period} we always understand the least eventual period of a sequence.)

\begin{theorem} \label{periodic}
Let $l$ be in $\Z_{\gqs 1}$ and $\z=\z_D$ a CM point.
\begin{enumerate}
\item The sequence $q_{m,\z}(0) \bmod l \O_K$ becomes periodic.
\item If $q_{m,\z}(0) \bmod l \O_K$ is periodic from $m=\alpha$ with
 period $\beta$ then $\alpha+\beta \lqs l\abs{\O_K/l \O_K}^{2l}$.
\end{enumerate}
\end{theorem}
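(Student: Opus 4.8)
The plan is to convert the polynomial recursion \eqref{rqe} into the forward orbit of a single point under a self-map of a \emph{finite} set, after which both assertions become pigeonhole statements.

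\emph{Step 1: descend to $\mathcal R_l$.} First I would reduce \eqref{rqe} modulo $l\O_K$ and then modulo $t^l$, landing in the finite ring $\mathcal R_l$ of \eqref{rlz}. The key point is that the recursion \emph{closes} in $\mathcal R_l$, i.e. that truncation at $t^l$ commutes with one step of \eqref{rqe}; this is the natural generalization of Remark \ref{simplification}. The only term that can transport information down from degrees $\gqs l$ is $a_3(t)\,q'_{n,\z}(t)$, but modulo $l$ the coefficient of $t^{l-1}$ in $q'_{n,\z}$ equals $l\cdot(\text{coeff.\ of }t^l)\equiv 0$, while every lower coefficient of $q'_{n,\z}$ depends only on coefficients of $q_{n,\z}$ of degree $\lqs l-1$. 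Hence the image $\overline q_{n,\z}\in\mathcal R_l$ satisfies the \emph{same} recursion, and crucially $q_{n,\z}(0)\equiv\overline q_{n,\z}(0)\pmod{l\O_K}$. From here on I work with the sequence $\overline q_{n,\z}$.

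\emph{Step 2: a finite dynamical system.} Since \eqref{rqe} is second order and its coefficients $a_1+n\,a_2$ and $n(n+11)a_4$ depend on $n$ only through $n\bmod l$ (the ring $\O_K/l\O_K$ kills $l$), the correct object to track is the triple
$$
s_n:=\bigl(\overline q_{n,\z},\ \overline q_{n-1,\z},\ n\bmod l\bigr)\in S:=\mathcal R_l\times\mathcal R_l\times(\Z/l\Z)\qquad(n\gqs 1).
$$
Reading \eqref{rqe} off gives a single deterministic map $F\colon S\to S$ with $s_{n+1}=F(s_n)$ for all $n\gqs 1$, so $\{s_n\}_{n\gqs 1}$ is the forward orbit of $s_1$. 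As an element of $\mathcal R_l$ is a polynomial of degree $\lqs l-1$ with coefficients in $\O_K/l\O_K$, the set $S$ is finite with
$$
\abs{S}=l\,\abs{\O_K/l\O_K}^{2l}=:N.
$$

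\emph{Step 3: pigeonhole.} Part (i) is then immediate: the forward orbit of $s_1$ under a self-map of the finite set $S$ is eventually periodic, and $q_{m,\z}(0)\bmod l\O_K$ is the constant term of the first coordinate of $s_m$, hence eventually periodic too. For part (ii) I would run the standard functional-graph count: the states $s_1,s_2,\dots$ up to the first repetition are distinct, so at most $N$ of them occur, which forces both the onset of periodicity and a full period to fit inside the $N$ available states; writing this out yields $\alpha+\beta\lqs N=l\,\abs{\O_K/l\O_K}^{2l}$. Finally, because $q_{m,\z}(0)\bmod l\O_K$ is a function of $s_m$, its pre-period is at most that of the state sequence and its least period divides that of the state sequence, so the same bound transfers to the sequence in the statement. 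The step I expect to be most delicate is Step 1, namely justifying that truncation at $t^l$ is harmless modulo $l$ despite the derivative $q'_{n,\z}$ appearing in \eqref{rqe}, so that no feedback from degrees $\gqs l$ ever reaches the low-degree coefficients along the iteration; once that is secured, identifying the finite state (remembering to carry $n\bmod l$) and the pigeonhole count are routine, and the explicit constant is exactly $\abs{S}$.
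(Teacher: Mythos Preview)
Your proposal is correct and follows essentially the same approach as the paper: reduce to $\mathcal R_l$ using $\frac{d}{dt}t^l\equiv 0\bmod l$, track the triple $(\overline q_{n,\z},\overline q_{n-1,\z},n\bmod l)\in\mathcal R_l^2\times(\Z/l\Z)$, and apply pigeonhole on this finite set of size $l\abs{\O_K/l\O_K}^{2l}$. Your packaging as a forward orbit under a self-map $F$ is slightly more explicit than the paper's, but the argument and the bound are identical.
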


\begin{proof}
Recall \eqref{rlz} and the map $\O_K[t] \to\mathcal R_l$ given by $p \mapsto \overline p$. Since $\frac{d}{dt}t^l\equiv 0 \bmod l$ we see that
$\overline{q}_n(t)$ also satisfies the  recursion
(\ref{simplified-writing}) which depends only on $n \bmod l$. See Remark \ref{simplification}. We can
therefore conclude that if, for some rational integers $i_0<j_0$,
\begin{equation} \label{ov}
  \overline{q}_{i_0}(t)=\overline{q}_{j_0}(t), \qquad
  \overline{q}_{i_0+1}(t)=\overline{q}_{j_0+1}(t),\qquad i_0\equiv j_0 \bmod   l,
\end{equation}
then $\overline{q}_{i_0+n}(t)=\overline{q}_{j_0+n}(t)$ for all
$n\in\Z_{\gqs 0}$. But since
$$
(\overline{q}_{i_0},\overline{q}_{i_0+1}, \overline{i_0}), \ \ (\overline{q}_{j_0},\overline{q}_{j_0+1}, \overline{j_0}) \ \in \
\mathcal R_l^2\times (\Z/l\Z),
$$ the box principle implies that
\eqref{ov} is true
for some $0 \lqs i_0<j_0\lqs \abs{\mathcal R_l^2\times
  (\Z/l\Z)}=l\abs{\O_K/l\O_K}^{2l}$. Therefore $q_m(0)\bmod l$ is
periodic from at most $m=i_0$ with period dividing $j_0-i_0$. \end{proof}

\subsection{A simpler recurrence} \label{Periodicity2}
Part of the complication of  recursion \eqref{simplified-writing} is that each polynomial $q_{n+1, \z}(t)$ in the sequence depends on the two before: $q_{n, \z}(t)$ and $q_{n-1, \z}(t)$. Working in $\mathcal R_l$, we see next that each element  in the subsequence
$$
1, \ \overline{q}_{l, \z}(t), \ \overline{q}_{2l, \z}(t), \ \overline{q}_{3l, \z}(t), \ \cdots
$$
depends only on its immediate predecessor.

Let $X= \overline{q}_{nl, \z}(t) \in \mathcal R_l$ for some $n$. Then $\overline{q}_{nl+1, \z}(t)$ is given by  \eqref{simplified-writing}. Since $nl(nl+11)a_4(t)\overline{q}_{nl-1}(t) \equiv 0$ in $\mathcal R_l$ we do not need to know $\overline{q}_{nl-1}(t)$. Continuing inductively, we get
\begin{eqnarray*}
% \nonumber to remove numbering (before each equation)
\overline{q}_{nl, \z}(t) &\equiv& X \\
  \overline{q}_{nl+1, \z}(t) &\equiv& a_{2}(t)X+a_{3}(t)X' \\
  \overline{q}_{nl+2, \z}(t) &\equiv& \bigl(a_{1}(t)+ a_{2}(t)\bigr) \bigl(a_{2}(t)X+a_{3}(t)X' \bigr) + {a_{3}}(t)\bigl(a_{2}(t)X+a_{3}(t)X' \bigr)'+ 12 a_{4}(t)X\\
  &\vdots & \\
  \overline{q}_{nl+l, \z}(t) &\equiv& f^{l,0}_\z(t) X + f^{l,1}_\z(t)  a_{3}(t) X'+ \cdots +f^{l,l}_\z(t)  a_{3}(t)^l X^{(l)}
\end{eqnarray*}
with polynomials $f^{l,i}_\z(t) \in \mathcal R_l$ independent of $X$ and $n$.
Let $\Psi_{l,\z}: \mathcal R_l \to \mathcal R_l$ be the linear map
\begin{equation}\label{psi1}
\Psi_{l,\z}(X):= f^{l,0}_\z(t) X + f^{l,1}_\z(t)  a_{3}(t) X'+ \cdots +f^{l,l}_\z(t)  a_{3}(t)^l X^{(l)}.
\end{equation}
We have
\begin{equation}\label{seq}
    \overline{q}_{l, \z}= \Psi_{l,\z}(1), \quad \overline{q}_{2l, \z}= \Psi_{l,\z}(\Psi_{l,\z}(1)), \quad \cdots ,\quad \overline{q}_{ml, \z}= \Psi_{l,\z}^m(1) \text{ \ in \ }\mathcal R_l
\end{equation}
so that the sequence $\overline{q}_{ml, \z}$ for $m=0,1,2, \dots$ satisfies the simple recurrence
$$
\overline{q}_{0, \z}=1, \quad \overline{q}_{(n+1)l, \z} = \Psi_{l,\z}(\overline{q}_{nl, \z}).
$$
The sequence $\Psi_{l,\z}^m(1)$ must eventually repeat for large enough $m$ since
$
\mathcal R_l
$
is finite. It follows that Theorem \ref{periodic}, part (ii) may be improved to:
$$
\text{If $q_{m,\z}(0) \bmod l\O_K$ is periodic from $m=\alpha$ with
   period $\beta$ then $\alpha+\beta \lqs l\abs{\O_K/l\O_K}^{l}$.}
$$

\subsection{Examples and conjectures}
In this part we systematically consider the periodicity of all sequences
\begin{equation}\label{lpq}
\overline{q}_{0, \z_D}(t), \ \overline{q}_{1, \z_D}(t), \ \overline{q}_{2, \z_D}(t), \ \cdots \ \text{ in } \  \mathcal R_l
\end{equation}
for  $D \in \mathscr D$ and $l$ prime  with $2 \lqs l< 100$. For each such pair $(D,l)$ we
use the techniques of \S \S \ref{Periodicity}, \ref{Periodicity2} to
find the least eventual period of \eqref{lpq}. Clearly, the  period of the corresponding sequence of constant terms, $\overline{q}_{m, \z_D}(0)$, will be a divisor. In the simplest case
$$
\overline{q}_{n, \z_D}(t) = \overline{q}_{n+1, \z_D}(t) =0
$$
for some $n$. Then all subsequent terms in the sequence will vanish,
the  period is $1$, and we have $q_{m,\z_D}(0) \to 0 \bmod l \O_K$. This
always happens for $l=2$, $3$, or  $l$ dividing $|D|$, as in these
cases we have $\overline{q}_{1, \z_D}(t) = \overline{q}_{2, \z_D}(t)
=0$. For primes $l$ with $3<l<100$ and  $l$ not dividing $|D|$, our computations show that whether $q_{m,\z_D}(0) \rightarrow 0 \mod l$ depends on the value of $l$ modulo $D$:
If $D \in \{-3,-4,-7,-11, -19\}$  we have
\begin{equation}\label{quadratic residue}
q_{m,\z_D}(0) \not\rightarrow 0 \mod l \iff l \text{ \ \ is a quadratic residue}\mod D
\end{equation}
and if $D \in \{-8,-15, -20, -24\}$  we have
\begin{equation}\label{not quadratic residue}
q_{m,\z_D}(0) \not\rightarrow 0 \mod l \O_K \iff \begin{cases} l \equiv 1,3 & \mod D=-8 \\
l \equiv 1,2,4,8 & \mod D=-15 \\
l \equiv 1,3,7,9 & \mod D=-20 \\
l \equiv 1,5,7,11 & \mod D=-24.
\end{cases}
\end{equation}

\begin{example}
{\em
Let $\z=\z_{-19}$. Modulo $l=41$ we compute that $\overline{q}_{m,\z}(t) = 0$ in $\mathcal R_{41}$ for $m \gqs 1219$ implying $q_{m,\z}(0) \rightarrow 0 \bmod 41$. This agrees with \eqref{quadratic residue} since $41 \equiv 3$ is a non-residue mod $19$.
 Modulo $l=43$ we have $\overline{q}_{m,\z}(t) =  \overline{q}_{m+43\cdot42,\z}(t)$ in $\mathcal R_{43}$ for $m \gqs 32$. We also see that in the first period there exist $\overline{q}_{m,\z}(0) \neq  0$. Therefore $q_{m,\z}(0) \not\rightarrow 0 \bmod 43$. This also agrees with \eqref{quadratic residue} because  $43 \equiv 5$ is a quadratic residue mod $19$.
}
\end{example}

On the above evidence, it is natural to conjecture that  \eqref{quadratic residue} and \eqref{not quadratic residue}  are in fact true for all primes $l>3$. Thus for each $D \in \mathscr D$ we obtain simple criteria for the primes $l$ for which the sequence $q_{m,\z_D}(0) \mod l \O_K$ vanishes for large $m$.
Changing the point of view to consider $D$ modulo $l$, Gautam Chinta gave a succinct reformulation of \eqref{quadratic residue} and \eqref{not quadratic residue} that we state as a Theorem:

\begin{theorem}\label{th2}
Let $D \in \mathscr D$ and $l$ any prime  with $3<l<100$ and  $l$ not dividing $|D|$.
Then
\begin{equation}\label{residue2}
q_{m,\z_D}(0) \not\rightarrow 0 \mod l \O_K \iff D \text{ \ \ is a quadratic residue}\mod l.
\end{equation}
\end{theorem}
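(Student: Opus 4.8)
The plan is to treat Theorem \ref{th2} purely as a reformulation: I take the statements \eqref{quadratic residue} and \eqref{not quadratic residue} as already given (these are the computational facts, valid in the stated range $3<l<100$, $l\nmid|D|$), and show that for each $D \in \mathscr D$ the residue-class condition appearing on the right of \eqref{quadratic residue} or \eqref{not quadratic residue} is \emph{identical} to the condition ``$D$ is a quadratic residue $\bmod\ l$''. Since $l$ is an odd prime with $l \nmid D$, ``$D$ is a quadratic residue mod $l$'' is exactly the assertion $\left(\frac{D}{l}\right)=1$ for the Legendre symbol, so the whole theorem reduces to understanding the symbol $\left(\frac{D}{l}\right)$ as a function of $l$.

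The key structural tool I would invoke is that, because $D$ is a fundamental discriminant, the Kronecker symbol $\chi_D(\cdot) := \left(\frac{D}{\cdot}\right)$ is a primitive real Dirichlet character modulo $|D|$. Consequently $\left(\frac{D}{l}\right)$ depends only on $l \bmod |D|$, and $\{\,l \bmod |D| : \chi_D(l)=1\,\}$ is the unique index-$2$ subgroup $\ker \chi_D$ of $(\Z/|D|\Z)^{*}$. For the second family $D \in \{-8,-15,-20,-24\}$ I would simply compute $\ker\chi_D$ directly and check that it coincides with the lists in \eqref{not quadratic residue}; for instance $\left(\frac{-8}{l}\right)=\left(\frac{-2}{l}\right)=1$ precisely for $l \equiv 1,3 \bmod 8$, and the remaining three cases are analogous finite verifications. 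This already matches \eqref{not quadratic residue} to the clean criterion $\chi_D(l)=1$.

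For the first family $D \in \{-3,-4,-7,-11,-19\}$ there is the extra wrinkle that \eqref{quadratic residue} is phrased in terms of ``$l$ a quadratic residue mod $|D|$'', i.e. $\left(\frac{l}{|D|}\right)=1$, so I must additionally show $\left(\frac{D}{l}\right)=\left(\frac{l}{|D|}\right)$. For the odd discriminants $|D|\in\{3,7,11,19\}$ the congruence $D\equiv 1 \bmod 4$ forces $|D|\equiv 3 \bmod 4$, so in quadratic reciprocity $\left(\frac{|D|}{l}\right)\left(\frac{l}{|D|}\right)=(-1)^{\frac{l-1}{2}\frac{|D|-1}{2}}=\left(\frac{-1}{l}\right)$; feeding this into $\left(\frac{D}{l}\right)=\left(\frac{-1}{l}\right)\left(\frac{|D|}{l}\right)$ collapses the sign and yields exactly $\left(\frac{D}{l}\right)=\left(\frac{l}{|D|}\right)$. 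The case $D=-4$ is just the first supplementary law, $\left(\frac{-4}{l}\right)=\left(\frac{-1}{l}\right)$, which is $1$ iff $l\equiv 1 \bmod 4$, i.e. iff $l$ is a square mod $4$, matching \eqref{quadratic residue}.

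I do not expect a genuine obstacle here: the argument is a finite piece of reciprocity bookkeeping, and the only place requiring care is the tracking of the supplementary factors $\left(\frac{-1}{l}\right)$ and $\left(\frac{2}{l}\right)$ for the even discriminants $-4,-8,-20,-24$, where $2 \mid D$. The substantive mathematical input is not in Theorem \ref{th2} at all but in the underlying observations \eqref{quadratic residue} and \eqref{not quadratic residue}, which are established only computationally for $3<l<100$; this is precisely why the theorem is stated with that restriction on $l$, and a proof valid for all primes would require understanding the eventual period of $q_{m,\z_D}(0)\bmod l\O_K$ intrinsically rather than by the case-by-case periodicity computations of \S\S\ref{Periodicity}, \ref{Periodicity2}.
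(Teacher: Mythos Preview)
Your proposal is correct and follows exactly the paper's approach: the paper explicitly presents Theorem \ref{th2} as a reformulation of the computational facts \eqref{quadratic residue} and \eqref{not quadratic residue}, stating only that ``proving that \eqref{quadratic residue} and \eqref{not quadratic residue} are equivalent to Theorem \ref{th2} is an exercise in quadratic reciprocity.'' You have simply carried out that exercise in detail, correctly handling the supplementary laws for the even discriminants and the reciprocity sign for the odd prime discriminants.
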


Proving that \eqref{quadratic residue} and \eqref{not quadratic residue} are equivalent to Theorem \ref{th2}  is an exercise in quadratic reciprocity.
Note  that \eqref{quadratic residue}, \eqref{not quadratic residue} and \eqref{residue2} are essentially independent of the normalization \eqref{ty}; if $\kappa_\z$ and $\lambda_\z$ are changed by factors in $\O_K$ (so that $q_{m,\z}$ remains in $\O_K[t]$) then \eqref{quadratic residue}, \eqref{not quadratic residue} and \eqref{residue2} will only be affected for finitely many primes $l$. We may further ask if Theorem \ref{th2} is also true for all negative fundamental discriminants $D$.

Next, we look at  the maps $\Psi_{l,\z}$ of \S \ref{Periodicity2} in greater depth. For $l$ prime, they are strikingly simple. In every case  examined, the coefficients $f^{l,i}_\z$ in \eqref{psi1}  are identically zero for $2\lqs i \lqs l-1$. Since $X^{(l)} \equiv 0$ in $\mathcal R_l$ our computations have shown the following.
\begin{prop} \label{px}
 For $D \in \mathscr D$  and all primes $l<100$ there exist $a$, $b \in \mathcal R_l$ such that
$$
\Psi_{l,\z}(X) = a X + b X'.
$$
\end{prop}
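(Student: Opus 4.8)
The plan is to read Proposition \ref{px} as the finite statement it is and to prove it by computing the maps $\Psi_{l,\z}$ of \eqref{psi1} directly over the nine discriminants $D\in\mathscr D$ and the primes $l<100$. Recall that every $X\in\mathcal R_l$ has degree at most $l-1$, so $X^{(l)}\equiv 0$ and the top term $f^{l,l}_\z a_3(t)^l X^{(l)}$ drops out automatically. The entire content of the Proposition is therefore that the intermediate coefficients $f^{l,i}_\z$ vanish for $2\lqs i\lqs l-1$; once this is shown only $i=0,1$ survive, and $\Psi_{l,\z}(X)=aX+bX'$ with $a=f^{l,0}_\z$ and $b=f^{l,1}_\z a_3(t)$.

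First I would fix the bookkeeping following \S \ref{Periodicity2}. Starting from the formal symbol $P_0:=X$ at a block boundary, where $nl\,a_2$ and the coupling coefficient $nl(nl+11)a_4$ both vanish modulo $l$, I would generate $P_{j+1}=(a_1+j a_2)P_j+a_3 P_j'+j(j+11)a_4 P_{j-1}$ in $\mathcal R_l$ for $0\lqs j\lqs l-1$, so that $\Psi_{l,\z}(X)=P_l$. Each $P_j$ is a differential operator $\sum_i c_{j,i}(t)X^{(i)}$ of order at most $j$, and one simply propagates the coefficient vectors $(c_{j,i})$ symbolically, reducing modulo $l\O_K$ throughout — which for $D=-15,-20$ is carried out in $(\Z/l\Z)[\sqrt 5\,]$ and for $D=-24$ in $(\Z/l\Z)[\sqrt 2\,]$, via the residue isomorphisms of \S \ref{cma}. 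At the final step one reads off $c_{l,i}$ for $2\lqs i\lqs l-1$ and checks each is zero. An equivalent and cheaper test, which is literally the assertion that $\Psi_{l,\z}$ is first order, is to set $a:=\Psi_{l,\z}(1)$ and $b:=\Psi_{l,\z}(t)-t\,\Psi_{l,\z}(1)$ and then confirm $\Psi_{l,\z}(t^m)=a\,t^m+m\,b\,t^{m-1}$ for $m=2,\dots,l-1$; since $\Psi_{l,\z}$ is linear and $1,t,\dots,t^{l-1}$ is a basis of $\mathcal R_l$, this suffices.

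The honest difficulty is that no a priori reason for the vanishing is supplied, so over the stated range the proof is a case-by-case verification and the real obstacle is uniformity in $l$ rather than any single delicate step. The natural structural explanation lives in characteristic $l$: the sequence $\mathcal B_n$ of Theorem \ref{taylor} is produced by iterating a Serre-type operator, and the operator $\dd$ of \eqref{part} satisfies $\dd^{\,l}\equiv\dd\pmod l$ on integral $q$-expansions, since $\dd$ multiplies $q^n$ by $n^l\equiv n$. Morally, this collapse of the $l$-th iterate to the first is what forces the $l$-step map $\Psi_{l,\z}$ down to order one. What blocks me from turning this heuristic into a clean uniform proof is the passage from the $q$-expansion picture to the single-variable recursion \eqref{simplified-writing}: the coordinate change to $t=RQ^{-3/2}$ (resp.\ $QR^{-2/3}$), together with the step-dependent coefficients $a_1+j a_2$ and the genuinely second-order coupling $j(j+11)a_4 P_{j-1}$, obscures the Frobenius structure, so that $\Psi_{l,\z}$ is a product of shifted copies rather than a pure power of a single derivation. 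I would therefore present the finite computation as the proof for $l<100$ and record the $\dd^{\,l}\equiv\dd$ phenomenon as the route one would pursue toward a statement valid for all $l$.
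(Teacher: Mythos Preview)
Your proposal is correct and matches the paper's approach exactly: the paper states the proposition with the preface ``our computations have shown the following'' and offers no further argument, so the proof is precisely the finite case-by-case verification you describe. If anything, you have been more explicit than the paper about how the computation is organized and about the $\dd^{\,l}\equiv\dd$ heuristic for a possible uniform statement.
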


\begin{example}
{\em
Let $\z=\z_{-4}$. Then $a_3(t)=6(t^2-1)$ and we have
\begin{eqnarray*}
% \nonumber to remove numbering (before each equation)
  \Psi_{5,\z}(X) &=& (2t) X + a_3(t) X' \\
  \Psi_{7,\z}(X) &=& (5t) X + t^2 \cdot a_3(t) X' \\
  \Psi_{11,\z}(X) &=& (7t^3+5t) X + t^2 \cdot a_3(t) X' \\
  \Psi_{13,\z}(X) &=& (5t^3+7t) X + (12 t^4+ 5 t^2 +10) a_3(t) X' .
\end{eqnarray*}
% \Psi_{23}(\rho) &=& (  22 t^7 + 10 t^5 + 4 t^3 + 22 t  ) \rho + (12 t^6 + 4 t^4 +8 t^2 ) \t6 \rho' +  \t6^{23} \rho^{(23)}.
}
\end{example}

\begin{example}
{\em
 In the calculations proving Theorem \ref{th2}, the
 largest  period we found for \eqref{lpq} was $23439864$ when $\z=\z_{-15}$ and $l=83$. To see this, we iterate   $\Psi_{l,\z}$ starting at $1$ and find  $$\Psi_{l,\z}^{83}(1)=\left(11+57\sqrt{5}\right)\Psi_{l,\z}^1(1)
 $$ with no two powers $< 83$ being equal up to a factor.
 The number $11+57\sqrt{5}$ has order $3444 \bmod l \O_K$ and so
 $\Psi_{l,\z}^m(1)$ has  period $82\cdot 3444$. It follows that
 $\overline{q}_{l(82\cdot 3444+1)+m, \z_D}(t)=\overline{q}_{l+m, \z_D}(t)$
 in $\mathcal R_l$  for $m\gqs 0$. Therefore $\overline{q}_{m, \z_D}(t)$ must have
  period $\beta$ dividing  $l \cdot 82 \cdot 3444$. If $l\vert
 \beta$ then $\beta/l$ is the period of $\Psi_{m,\z}(1)$ so $\beta=l\cdot
 82\cdot 3444$.  If $l\not \vert
 \beta$ then $\beta$ is the period of $\Psi_{m,\z}(1)$ so $\beta=82 \cdot 3444$. Checking
 this, we find $\beta$ is $l \cdot 82 \cdot 3444$. The period of the constant term sequence $\overline{q}_{m, \z_D}(0)$ must  be a factor and a final check shows it is $82\cdot 3444$.
 }
\end{example}

Thus we notice  interesting relationships between the  periods of the sequences $\Psi_{l,\z}^m(1)$, $\overline{q}_{m,\z}(t)$ and $\overline{q}_{m,\z}(0)$.
The first, between $\Psi_{l,\z}^m(1)$ and $\overline{q}_{m,\z}(t)$, can be shown in general. Recall the polynomials $a_i(t)$, depending on $\z$, defined in \eqref{simplified-writing}.

\begin{lemma}\label{onemorelemma}
Let $l>2$ be a prime and $\z=\z_D$ any CM point. If $\overline{q}_{m,\z}(t)$ eventually has  period $\beta$, not divisible by $l$, then
\begin{equation}\label{maybethelast}
\overline{a}_2(t) \overline{q}_{n,\z}(t) \to 0\quad \textrm{ and } \quad \overline{a}_4(t) \overline{q}_{n,\z}(t) \to 0 \textrm{ \ in \ } \mathcal
R_l.
\end{equation}
\end{lemma}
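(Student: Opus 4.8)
The plan is to exploit the very specific way in which the index $n$ enters the recursion \eqref{simplified-writing}: it appears only through the linear coefficient $n\,a_2(t)$ and the quadratic coefficient $n(n+11)a_4(t)$. The idea is to cancel the genuinely periodic parts of the sequence by differencing the recursion across one full period $\beta$, after which the surviving $n$-dependence pins down exactly the products in \eqref{maybethelast}.

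First I would pass to $\mathcal R_l$ and recall (as in the proof of Theorem \ref{periodic}) that formal differentiation descends to $\mathcal R_l$, since $\tfrac{d}{dt}t^l\equiv 0$; hence $\overline{q}_m$ and $\overline{q}_m'$ are both eventually periodic with period $\beta$. Writing \eqref{simplified-writing} at index $n$ and at index $n+\beta$, for $n$ large enough that all relevant indices lie in the periodic range, and substituting $\overline{q}_{n+\beta}=\overline{q}_n$, $\overline{q}_{n+\beta}'=\overline{q}_n'$ and $\overline{q}_{n+\beta\pm 1}=\overline{q}_{n\pm 1}$, the $\overline{a}_1$- and $\overline{a}_3$-contributions drop out on subtraction. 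Using the elementary identity $(n+\beta)(n+\beta+11)-n(n+11)=\beta(2n+\beta+11)$ one is left with
$$
0=\beta\,\overline{a}_2\,\overline{q}_n+\beta(2n+\beta+11)\,\overline{a}_4\,\overline{q}_{n-1}.
$$
Since $l$ is prime and $l\nmid\beta$, the integer $\beta$ is a unit in $\O_K/l\O_K$, so dividing by it yields the single relation
$$
\overline{a}_2\,\overline{q}_n=-(2n+\beta+11)\,\overline{a}_4\,\overline{q}_{n-1}\qquad(\star)
$$
valid for all large $n$.

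Next I would difference $(\star)$ against itself. Replacing $n$ by $n+\beta$ in $(\star)$ and invoking periodicity of $\overline{q}_n$, $\overline{q}_{n-1}$ gives $\overline{a}_2\,\overline{q}_n=-(2n+3\beta+11)\,\overline{a}_4\,\overline{q}_{n-1}$; subtracting this from $(\star)$ the $n$ cancels cleanly and leaves $0=2\beta\,\overline{a}_4\,\overline{q}_{n-1}$. Here both hypotheses are used: $l>2$ makes $2$ invertible and $l\nmid\beta$ makes $\beta$ invertible, so $2\beta$ is a unit and $\overline{a}_4\,\overline{q}_{n-1}=0$ for all large $n$, i.e. $\overline{a}_4\,\overline{q}_n\to 0$. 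Feeding this back into $(\star)$ forces $\overline{a}_2\,\overline{q}_n\to 0$ as well, which is precisely \eqref{maybethelast}.

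The only genuinely delicate point — and the reason the hypotheses are stated as they are — is the invertibility of $\beta$ and of $2\beta$ in $\O_K/l\O_K$: the argument collapses if $l\mid\beta$ (first differencing) or if $l=2$ (second differencing), so I would be careful to invoke $l\nmid\beta$ at the first step and both $l>2$ and $l\nmid\beta$ at the second. Everything else is routine bookkeeping: keeping the index range safely inside the eventual period, and the observation that reduction mod $(l,t^l)$ commutes with differentiation.
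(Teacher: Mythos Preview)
Your proof is correct and follows essentially the same route as the paper: both difference the recursion across a period to strip away the $a_1$- and $a_3$-terms, leaving a one-parameter family of linear relations in $\overline{a}_2\,\overline{q}_n$ and $\overline{a}_4\,\overline{q}_{n-1}$, from which two instances are subtracted to isolate $\overline{a}_4\,\overline{q}_{n-1}$. The only cosmetic difference is that the paper shifts by $m\beta$ for general $m$ and then specializes to two convenient values $v=m\beta\equiv 1,2$, whereas you shift by $\beta$ once to obtain $(\star)$ and then shift $(\star)$ itself by $\beta$; the invertibility of $2\beta$ (using $l>2$ and $l\nmid\beta$) plays the identical role in both arguments.
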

\begin{proof}
Suppose $\overline{q}_{n+\beta,\z}(t)=\overline{q}_{n,\z}(t)$  for $n\gqs \alpha$. Let $n$ be any integer $>\alpha$.
Then for every positive $m$  we have
\begin{equation}\label{rio}
\overline{q}_{n+m\beta+1,\z}(t) = \overline{q}_{n+1,\z}(t).
\end{equation}
Applying the recursion \eqref{simplified-writing} to both sides of \eqref{rio} and equating corresponding parts  yields
$$
m \beta\Bigl(\overline{a}_2(t) \overline{q}_{n,\z}(t)+(2n+11+m\beta)\overline{a}_4(t)\overline{q}_{n-1,\z}(t)\Bigr) = 0 \quad \text{in \ } \mathcal R_l.
$$
Since $\beta$ is coprime to $l$ we may choose $m$ so that  $m\beta$
takes any value mod $l$. If this value is non-zero, $m\beta$ is a unit
in $\mathcal R_l$.   Therefore
$$
\overline{a}_2(t) \overline{q}_{n,\z}(t)+(2n+11+v)\overline{a}_4(t)\overline{q}_{n-1,\z}(t) = 0 \quad \text{in \ } \mathcal R_l
$$
for any non-zero $v \bmod l$. Using this with $v=2$ and $v=1$ and
subtracting these two equalities gives
$\overline{a}_4(t)\overline{q}_{n-1}(t)=0$ in $\mathcal R_l$ when $n>\alpha$. From this it
 further follows that $\overline{a}_2(t)\overline{q}_{n,\z}(t)=0$ in $\mathcal R_l$
when $n>\alpha$.
\end{proof}

Label the ideal in $\O_K/l\O_K$ generated by $\overline{a}_2(0)$ and $\overline{a}_4(0)$ as $(\overline{a}_2(0),\overline{a}_4(0))$ .

%From Lemma \ref{onemorelemma} we see that if either $\overline{a_4}(t)$ or $\overline{a_2}(t)$ is \emph{not} a zero divisor in $R_l$ then under the
%assumptions of the lemma we can conclude that  $\overline {q_n(t)}=0$ for $n$ sufficiently large. We note that
%$$\sum_{i=0}^{l-1}a_lt^l\in \mathcal R_l$$ is a \emph{unit} if (and only if) $a_0$
%is a unit in $\O_K/l\O_K$. If this is the case the inverse is given by $\sum_{i=0}^{l-1}b_lt^l$,
%$b_0=a_0^{-1}$, $b_i=-a_0^{-1}\sum_{j=0}^{i-1}b_ja_{i-j}$ for $i=1,\ldots l-1$.  We note that $a_2(t)$, $a_4(t)$ are polynomials
%with \emph{rational} integer coefficients so their projections to $\mathcal R_l$ are units precisely if $l$ does not divide $a_2(0)$
%resp. $a_4(0)$. Since $a_2(0)$ and $a_4(0)$ are rational integers this is equivalent to
%$\overline{a_2}(0)\neq 0$ or  $\overline{a_4}(0)\neq$ in $\mathcal R_l$. This allows us to prove the following proposition:

\begin{prop}\label{pwp} Let $l>2$ be any prime and $\z=\z_D$ any CM point. If
  $(\overline{a}_2(0),\overline{a}_4(0)) = \O_K/l\O_K$ and $\overline{q}_{m,\z}(0) \not\rightarrow 0$ in $\mathcal R_l$ then
\begin{equation}\label{cj1}
 \operatorname{period}\left(\overline{q}_{m,\z}(t)\right) = l \cdot \operatorname{period}\left(\Psi_{l,\z}^m(1)\right).
\end{equation}
\end{prop}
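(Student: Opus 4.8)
The plan is to write $P := \operatorname{period}\bigl(\overline q_{m,\z}(t)\bigr)$ and $P' := \operatorname{period}\bigl(\Psi_{l,\z}^m(1)\bigr)$, recall from \eqref{seq} that $\Psi_{l,\z}^m(1) = \overline q_{ml,\z}(t)$, and then establish the three divisibility relations $l\mid P$, $\;P'\mid P/l$, and $P\mid lP'$. An elementary manipulation of these forces $P = lP'$, which is \eqref{cj1}. Throughout I work with least eventual periods, as stipulated.

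First I would show $l\mid P$. Suppose not; since $l$ is prime, $P$ is then coprime to $l$, so Lemma \ref{onemorelemma} applies with $\beta = P$ and yields $\overline a_2(t)\overline q_{n,\z}(t)\to 0$ and $\overline a_4(t)\overline q_{n,\z}(t)\to 0$ in $\mathcal R_l$. Passing to constant terms (the constant term of a product in $\mathcal R_l$ is the product of the constant terms), we get $\overline a_2(0)\overline q_{n,\z}(0) = 0$ and $\overline a_4(0)\overline q_{n,\z}(0) = 0$ in $\O_K/l\O_K$ for all large $n$. Writing $1 = u\,\overline a_2(0) + v\,\overline a_4(0)$ via the hypothesis $(\overline a_2(0),\overline a_4(0)) = \O_K/l\O_K$, we conclude $\overline q_{n,\z}(0) = 0$ for all large $n$, contradicting the hypothesis $\overline q_{m,\z}(0)\not\to 0$. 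Hence $l\mid P$; writing $P = lP_0$, the subsequence satisfies $\overline q_{(m+P_0)l,\z}(t) = \overline q_{ml+P,\z}(t) = \overline q_{ml,\z}(t)$ for large $m$, so $P'\mid P_0 = P/l$.

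Next I would prove $P\mid lP'$, using the structural fact underlying the very construction of $\Psi_{l,\z}$: at an index $n = ml$ divisible by $l$ the coefficient $n(n+11)a_4(t)$ in \eqref{simplified-writing} vanishes mod $l$, so $\overline q_{ml+1,\z}$ does not involve $\overline q_{ml-1,\z}$. Consequently the entire block $\overline q_{ml,\z},\,\overline q_{ml+1,\z},\ldots,\overline q_{ml+l-1,\z}$ is produced by iterating \eqref{simplified-writing} from the single value $X = \overline q_{ml,\z}$; and since the recurrence coefficients depend only on the index mod $l$, this block is the image of $X$ under one fixed map $\mathcal R_l\to\mathcal R_l^{\,l}$ independent of $m$. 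Because the subsequence has period $P'$ we have $\overline q_{(m+P')l,\z} = \overline q_{ml,\z}$ for large $m$; applying the fixed block-map to both sides gives $\overline q_{ml+lP'+j,\z} = \overline q_{ml+j,\z}$ for $0\le j\le l-1$, so that $lP'$ is an eventual period of the full sequence and $P\mid lP'$.

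Finally, combining $P'\mid P/l$ with $P\mid lP'$: writing $P/l = cP'$ gives $P = lcP'$, and then $P\mid lP'$ forces $c = 1$, whence $P = lP'$. The main obstacle is the first step, where both hypotheses are genuinely needed: the delicate point is to convert the polynomial relations $\overline a_i(t)\overline q_{n,\z}(t)\to 0$ supplied by Lemma \ref{onemorelemma} into the scalar statement about constant terms, at which point the coprimality $(\overline a_2(0),\overline a_4(0)) = \O_K/l\O_K$ lets us cancel $\overline a_2(0)$ and $\overline a_4(0)$ and contradict the assumed non-vanishing of $\overline q_{m,\z}(0)$.
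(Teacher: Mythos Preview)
Your argument is correct and follows essentially the same route as the paper's proof: both use Lemma~\ref{onemorelemma} together with the hypothesis $(\overline{a}_2(0),\overline{a}_4(0)) = \O_K/l\O_K$ to force $l\mid P$ by contradiction, and then identify the period of $\Psi_{l,\z}^m(1)$ with $P/l$. The paper compresses your steps~2 and~3 into the single sentence ``Hence $\Psi_{l,\z}^m(1)$ \ldots\ has period $\beta/l$'', whereas you spell out the block-map argument showing $P\mid lP'$; your version makes explicit what the paper leaves to the reader.
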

\begin{proof}
We claim that the period of $\overline{q}_{m,\z}(t)$, call it $\beta$,
is divisible by
$l$. Suppose not. Then it follows from Lemma \ref{onemorelemma} that
$\overline{a}_2(t) \overline{q}_{n,\z}(t) \to 0$ and $\overline{a}_4(t) \overline{q}_{n,\z}(t) \to 0$ in $\mathcal
R_l$. But since $(\overline{a}_2(0),\overline{a}_4(0)) = \O_K/l\O_K$, there exist $x, y \in \O_K/l\O_K$ such that $x \overline{a}_2(0)+ y \overline{a}_4(0)= 1$ in $\O_K/l\O_K$.
We can conclude that $\overline{q}_{n,\z}(0) \to 0$. But this contradicts our assumptions. Therefore $l | \beta$. Hence $\Psi_{l,\z}^m(1)$ (which equals $\overline{q}_{ml,\z}(t)$) has period $\beta/l$.
\end{proof}
The  conditions of Proposition \ref{pwp} are usually satisfied. For $D\in \mathscr D$, $l$ prime with $2<l<100$ and $\overline{q}_{m,\z_D}(0) \not\rightarrow 0$, we have $(\overline{a}_2(0),\overline{a}_4(0)) \neq \O_K/l\O_K$ only for the three pairs $(D,l)$ with $D=-11,-19,-24$ and $l=5$.

The next proposition, proved by computation, shows how the periods of $\overline{q}_{m,\z}(t)$ and
$\overline{q}_{m,\z}(0)$ are related.
\begin{prop}\label{qper}
Let $D \in \mathscr D$ and $l$ any prime $<100$. Suppose $(\overline{a}_2(0),\overline{a}_4(0)) = \O_K/l\O_K$ and $\overline{q}_{m,\z_D}(t) \not\rightarrow 0$. Then
\begin{equation}\label{cj}
 \operatorname{period}(\overline{q}_{m,\z_D}(t)) = l \cdot \operatorname{period}(\overline{q}_{m,\z_D}(0)).
\end{equation}
\end{prop}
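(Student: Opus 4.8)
The plan is to deduce \eqref{cj} from Proposition \ref{pwp} by an essentially computational comparison, after first isolating the one nontrivial arithmetic fact that remains. Write $P=\operatorname{period}(\overline{q}_{m,\z_D}(t))$, $P_0=\operatorname{period}(\overline{q}_{m,\z_D}(0))$, and $S=\operatorname{period}(\Psi_{l,\z}^m(1))$. Under the stated hypotheses Proposition \ref{pwp} already gives $P=l\cdot S$, and by \eqref{seq} we have $\Psi_{l,\z}^m(1)=\overline{q}_{ml,\z_D}(t)$, so $S$ is the period of the subsequence of polynomials sampled at multiples of $l$. Thus \eqref{cj} is equivalent to the single assertion
\begin{equation*}
S=P_0,
\end{equation*}
that the period of this sampled polynomial subsequence equals the period of the full sequence of constant terms.

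First I would record the two automatic divisibilities. Since the constant terms are obtained by evaluating the polynomials at $t=0$, the number $P$ is a period of $\overline{q}_{m,\z_D}(0)$, whence $P_0\mid P=lS$. On the other hand $\overline{q}_{ml,\z_D}(0)$ is the constant term of $\Psi_{l,\z}^m(1)$, so the subsequence $\overline{q}_{ml,\z_D}(0)$ has period dividing $S$; but sampling the full constant-term sequence at every $l$-th index divides its period by $\gcd(P_0,l)$, so $P_0/\gcd(P_0,l)$ divides $S$. Hence the whole question reduces to controlling the power of $l$ dividing $P_0$: if I can show $l\nmid P_0$ then $\gcd(P_0,l)=1$ forces $P_0\mid S$, and the matching bound $S\mid P_0$ will follow once I know that the constant functional realizes the full period of the $\Psi_{l,\z}$-orbit of $1$, which is exactly the content of the hypothesis $\overline{q}_{m,\z_D}(t)\not\to 0$ (equivalently, under $(\overline{a}_2(0),\overline{a}_4(0))=\O_K/l\O_K$, that $\overline{q}_{m,\z_D}(0)\not\to 0$, so the constant term does not collapse along the orbit and Proposition \ref{pwp} applies).

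The hard part is proving $l\nmid P_0$, and this is where a purely formal argument resists closing. The natural attempt is to mimic Lemma \ref{onemorelemma}: assuming $l\mid P_0$, evaluate the recursion \eqref{simplified-writing} at $t=0$ over a full period and let the coprime multiplier range over the units of $\Z/l\Z$, hoping to force $\overline{a}_2(0)\overline{q}_{n,\z}(0)\to 0$ and $\overline{a}_4(0)\overline{q}_{n-1,\z}(0)\to 0$ and hence, via $(\overline{a}_2(0),\overline{a}_4(0))=\O_K/l\O_K$, the contradiction $\overline{q}_{m,\z_D}(0)\to 0$. The obstruction is that the constant-term recursion is not self-contained: the term $a_3(t)q'_{n,\z}(t)$ contributes $a_3(0)q'_{n,\z}(0)$, so the evolution of $\overline{q}_{m,\z_D}(0)$ genuinely depends on higher Taylor coefficients, and the clean cancellation available at the polynomial level in Lemma \ref{onemorelemma} is unavailable here. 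For this reason I would finish, as the statement indicates, by direct computation: for each of the finitely many pairs $(D,l)$ with $D\in\mathscr D$ and $l<100$ prime satisfying the hypotheses, use the efficient single-map iteration of \S \ref{Periodicity2} to compute $S$ and the recursion of \S \ref{Periodicity} to compute $P_0$, and verify $S=P_0$ (equivalently $l\nmid P_0$) in every case.
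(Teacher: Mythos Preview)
Your ultimate conclusion --- verify the identity case by case for the finitely many pairs $(D,l)$ --- is exactly what the paper does: the sentence immediately preceding the proposition states it is ``proved by computation,'' and no further argument is given. So your final step is correct and matches the paper.

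However, the theoretical scaffolding you build before falling back to computation has unjustified steps that you should not present as settled. In particular:
\begin{itemize}
\item You invoke Proposition \ref{pwp} to get $P=lS$, but that proposition assumes $\overline{q}_{m,\z}(0)\not\to 0$, whereas the hypothesis of Proposition \ref{qper} is only $\overline{q}_{m,\z_D}(t)\not\to 0$. You assert these are equivalent under the ideal condition, but this is neither proved in the paper nor obvious: nothing a priori prevents the polynomial sequence from being eventually periodic with nonzero polynomials whose constant terms all vanish.
\item Your claim that $S\mid P_0$ because ``the constant functional realizes the full period of the $\Psi_{l,\z}$-orbit of $1$'' is precisely the statement in question and cannot be used as an input.
\end{itemize}
Since you end up computing both $S$ and $P_0$ anyway, the cleanest write-up is simply to compute the two periods $\operatorname{period}(\overline{q}_{m,\z_D}(t))$ and $\operatorname{period}(\overline{q}_{m,\z_D}(0))$ directly (as in the examples of \S\ref{cma} and the last example of \S\ref{arit}) and record the equality \eqref{cj} for each admissible $(D,l)$, dropping the intermediate reductions.
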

We expect that Propositions \ref{px} and \ref{qper} hold for all negative fundamental discriminants $D$ and all primes $l$.

\section{Vanishing of $c_z(\Delta,m)$ and $P_{z,m}$ for $z \in \H$} \label{numer}
We have seen, with the proof of Theorem \ref{ell lehmer} in \S \ref{cma}, that $A_\Delta$ does not contain CM points $\z_D$ of small discriminant. To test the inclusion of general points in $\H$ we
define
$$
\e6_m(z):=\sum_{r=0}^m \frac{m!}{r!} \binom{m+11}{r+11} \left(E^*_2\right)^{m-r}\mathcal B_{r}
$$
with $\mathcal B_{r}$ given by \eqref{pn1}.
From Theorem \ref{taylor} we see that
$$c_z(\Delta,m) =0 \iff \e6_m(z)=0$$ and from (\ref{preln})
this happens precisely when $P_{z,m}$ vanishes identically.
%In this section we investigate  for which points $z \in \H$ this happens.
The first few $\e6_m$ are
\begin{eqnarray*}
% \nonumber to remove numbering (before each equation)
  \e6_0 &=& 1 \\
  \e6_1 &=& 12\left[E_2^*\right] \\
  \e6_2 &=& 12\left[13(E_2^*)^2-E_4\right] \\
  \e6_3 &=& 24\left[91(E_2^*)^3- 21E_2^*E_4+2E_6\right]\\
  \e6_4 &=& 72\left[455 (E_2^*)^4 - 210 (E_2^*)^2 E_4 + 40 E_2^* E_6 +3 E_4^2\right].
\end{eqnarray*}
%\e6_5 &=& 1152\left[455 (E_2^*)^5 - 350 (E_2^*)^3 E_4 + 100 (E_2^*)^2 E_6 + 15 E_2^* E_4^2 - 4 E_4 E_6\right].
Let $\F$ be the usual fundamental domain for $\G$, as shown twice in Figure 1. For all $m$, we wish to locate the zeros of $\e6_m$ in $\F$.

%*************************************
% Graphics

%zeros

\SpecialCoor
\psset{griddots=5,subgriddiv=0,gridlabels=0pt}

\begin{figure}[h]
\begin{center}
\begin{pspicture}(-4.8,-0.5)(12,6.5) %\psgrid

\psset{linewidth=1pt}

\pscustom[fillstyle=gradient,linecolor=white,gradmidpoint=1,gradbegin=byellow,gradend=lightblu2,gradlines=50]{%
  \psline[linewidth=1pt](-1,6.5)(-1,1.732)
  \psarcn[linewidth=1pt](0,0){2}{120}{90}
  \psarcn[linestyle=dashed,linecolor=gray](0,0){2}{90}{60}
  \psline[linestyle=dashed,linecolor=gray](1,1.732)(1,6.5)
  \psline[linecolor=white](1,6.5)(-1,6.5)
  }

\psline[linewidth=1pt](-1,6.5)(-1,1.732)
  \psarcn[linewidth=1pt](0,0){2}{120}{90}
  \psarcn[linestyle=dashed,linecolor=gray](0,0){2}{90}{60}
  \psline[linestyle=dashed,linecolor=gray](1,1.732)(1,6.5)

\psline[linecolor=gray](0,0)(0,0.15)
\psline[linecolor=gray](-1,0)(-1,0.15)
\psline[linecolor=gray](1,0)(1,0.15)

\rput(-3.3,2){$1$}
\rput(-3.3,4){$2$}
\rput(-3.3,6){$3$}

  \psline[linewidth=1pt,linecolor=gray](-3.5,0)(3,0)

\psline[linecolor=gray](-3,2)(-2.85,2)
\psline[linecolor=gray](-3,4)(-2.85,4)
\psline[linecolor=gray](-3,6)(-2.85,6)

  \psline[linewidth=1pt,linecolor=gray](-3,-0.5)(-3,6.5)

  \rput(0,-0.3){$0$}
  \rput(1,-0.3){$1/2$}
  \rput(-1,-0.3){$-1/2$}
  \rput(2.3,4){zeros of $\e6_6$}
  \rput(-1.5,5){$\F$}

  \rput(2.5,1){$\H$}

\psdot*(0,2.32)
\psdot*(-0.579,1.914)
  %\psdot*(-0.946,1.761)
\psdot*(0,3)
\psdot*(0,3.7)
\psdot*(0,5.2)
\psdot*(-1,2.14)
\psdot*(-1,2.76)
\psdot*(-1,3.9)
\psdot*(-1,5.2)

%%%%%%%%%%%%%%%%%%%%%%%%

 \psline[linewidth=1pt,linecolor=gray](4.5,0)(11,0)
 \psline[linewidth=1pt,linecolor=gray](5,-0.5)(5,6.5)

\pscustom[fillstyle=gradient,linecolor=white,gradmidpoint=1,gradbegin=byellow,gradend=lightblu2,gradlines=50]{%
  \psline[linewidth=1pt](7,6.5)(7,1.732)
  \psarcn[linewidth=1pt](8,0){2}{120}{90}
  \psarcn[linestyle=dashed,linecolor=gray](8,0){2}{90}{60}
  \psline[linestyle=dashed,linecolor=gray](9,1.732)(9,6.5)
  \psline[linecolor=white](9,6.5)(7,6.5)
  }

\psline[linewidth=1pt](7,6.5)(7,1.732)
  \psarcn[linewidth=1pt](8,0){2}{120}{90}
  \psarcn[linestyle=dashed,linecolor=gray](8,0){2}{90}{60}
  \psline[linestyle=dashed,linecolor=gray](9,1.732)(9,6.5)

\psline[linecolor=gray](8,0)(8,0.15)
\psline[linecolor=gray](7,0)(7,0.15)
\psline[linecolor=gray](9,0)(9,0.15)

\rput(4.7,2){$1$}
\rput(4.7,4){$2$}
\rput(4.7,6){$3$}

\psline[linecolor=gray](5,2)(5.15,2)
\psline[linecolor=gray](5,4)(5.15,4)
\psline[linecolor=gray](5,6)(5.15,6)

  \rput(8,-0.3){$0$}
  \rput(9,-0.3){$1/2$}
  \rput(7,-0.3){$-1/2$}
  \rput(10.3,4){zeros of $\e6_8$}
  \rput(6.5,5){$\F$}

\rput(10.5,1){$\H$}

\psdot*(8,2.32)
\psdot*(7.598,1.959)
\psdot*(8,3.1)
\psdot*(8,4.04)
\psdot*(8,4.9)
\psdot*(8,6.4)
\psdot*(7,1.732)
\psdot*(7,2.02)
\psdot*(7,3.1)
\psdot*(7,3.69)
\psdot*(7,5)
\psdot*(7,6.4)

\psdot*(7.282, 2.928)
\psdot*(7.486, 2.456)
\psdot*(7.806, 2.32)
\psdot*(8.718, 2.928)
\psdot*(8.514, 2.456)
\psdot*(8.194, 2.32)

\end{pspicture}
\caption{The zeros of $\e6_6$ and $\e6_8$\label{pfig}}
\end{center}
\end{figure}

%*************************************

\begin{lemma} \label{l8}
We have $\lim_{y \to \ci} \e6_m(z)=12^m$, uniformly in $x$.
\end{lemma}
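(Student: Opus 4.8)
The plan is to avoid analyzing $\e6_m$ summand by summand and instead to recognize it as a normalized iterated derivative of $\Delta$. Comparing the formula of Theorem \ref{taylor} (whose inner sum is exactly $\e6_m/m!$) with the identity \eqref{czm}, both valid for every $z\in\H$ and both carrying the common factor $(2iy)^{m+6}/m!$ since $z-\overline z=2iy$ with $y=\Im z$, I would cancel that factor to obtain
\begin{equation*}
\e6_m(z)=12^m\,\frac{\partial^m\Delta(z)}{\Delta(z)}\qquad(z\in\H),
\end{equation*}
using $(2\pi i)/(\pi i/6)=12$. As $\Delta$ never vanishes on $\H$ this is an honest identity of real-analytic functions, and the lemma is reduced to proving that $\partial^m\Delta(z)/\Delta(z)\to1$ uniformly in $x$ as $y\to\infty$.

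First I would expand the numerator by \eqref{dmfz} with $f=\Delta$, $k=12$, and divide by $\Delta$:
\begin{equation*}
\frac{\partial^m\Delta(z)}{\Delta(z)}=\frac{m!}{(-4\pi y)^m}\sum_{r=0}^m\binom{m+11}{r+11}\frac{(2iy)^r}{r!}\,\frac{\Delta^{(r)}(z)}{\Delta(z)}.
\end{equation*}
The one genuine input is the asymptotic $\Delta^{(r)}(z)/\Delta(z)\to(2\pi i)^r$ as $y\to\infty$, uniformly in $x$. This drops out of the $q$-expansion: writing $\Delta^{(r)}=(2\pi i)^r\sum_{n\gqs1}n^r\tau(n)q^n$ and dividing the leading term $q$ out of numerator and denominator gives
\[
\frac{\Delta^{(r)}(z)}{\Delta(z)}=(2\pi i)^r\,\frac{1+\sum_{n\gqs2}n^r\tau(n)q^{n-1}}{1+\sum_{n\gqs2}\tau(n)q^{n-1}},
\]
and since $|q^{n-1}|=e^{-2\pi(n-1)y}$, the polynomial growth of $\tau(n)$ forces both tail series to $0$ uniformly in $x$ once $y$ is bounded below.

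It then remains to take the termwise limit. For $r<m$ the $r$-th summand contains the factor $y^{r-m}\to0$ while $\Delta^{(r)}/\Delta$ stays bounded, so these contributions vanish uniformly in $x$. The single surviving term is $r=m$, which equals
\[
\frac{m!}{(-4\pi y)^m}\cdot\frac{(2iy)^m}{m!}\cdot\frac{\Delta^{(m)}(z)}{\Delta(z)}\longrightarrow\left(\frac{(2iy)(2\pi i)}{-4\pi y}\right)^m=1,
\]
an exact cancellation. Thus $\partial^m\Delta/\Delta\to1$, and hence $\e6_m\to12^m$, uniformly in $x$. I expect the only subtle point to be the uniformity in $x$, which is precisely why I isolate the normalized ratio $\Delta^{(r)}/\Delta$ and bound its tails independently of $x$; the surrounding manipulations are routine bookkeeping.
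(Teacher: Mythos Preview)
Your argument is correct and follows essentially the same route as the paper: derive $\e6_m=12^m\,\partial^m\Delta/\Delta$ by comparing Theorem~\ref{taylor} with \eqref{czm}, then combine \eqref{dmfz} with the limit $\Delta^{(r)}/\Delta\to(2\pi i)^r$. The paper's proof is terser (it simply cites \eqref{dmfz}, \eqref{dem} and \eqref{dem2}), while you spell out the termwise limit and the uniformity-in-$x$ coming from the $q$-expansion, but the ideas are the same.
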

\begin{proof}
Combine \eqref{czm}, Theorem \ref{taylor} and the definition of $\e6_m$ to see that
\begin{equation}\label{dem}
\e6_m=12^m \partial^m \Delta/ \Delta
\end{equation}
so that $\e6_m$ is a non-holomorphic modular form of weight $2m$ satisfying the recursion $\e6_{m+1}=12 \partial \e6_m+\e6_1 \e6_m$. Note also that
\begin{equation}\label{dem2}
    \lim_{y \to \ci} \Delta^{(r)}(z)/\Delta(z) = (2\pi i)^r.
\end{equation}
The lemma now follows from \eqref{dmfz}, \eqref{dem} and \eqref{dem2}.
\end{proof}

\begin{lemma} For each $m$, the zeros of $\e6_m$
\begin{enumerate}
\item form a set of measure $0$,
\item are contained in a bounded region of $\F$,
\item are symmetric about the line $\Re(z)=0$.
\end{enumerate}
\end{lemma}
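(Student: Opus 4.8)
The plan is to read off all three properties from the explicit description of $\e6_m$ given by Theorem \ref{taylor}, namely as a polynomial with rational coefficients in $E^*_2$, $E_4=Q$ and $E_6=R$, together with the asymptotic behaviour recorded in Lemma \ref{l8}. For part (i) I would first note that $\e6_m$ is real analytic on $\H$: the functions $E_4$, $E_6$ are holomorphic, and $E^*_2=E_2-3/(\pi y)$ is the sum of a holomorphic function and a real analytic one, so any polynomial in them is real analytic. Lemma \ref{l8} gives $\lim_{y\to\ci}\e6_m(z)=12^m\neq 0$, whence $\e6_m$ does not vanish identically. The zero set of a real analytic function on the connected domain $\H$ that is not identically zero has measure zero; this is exactly the mechanism already exploited in the proof of Theorem \ref{almost all non-vanishing}, so part (i) follows at once.

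For part (ii) I would again invoke Lemma \ref{l8}. Since $\e6_m(z)\to 12^m$ uniformly in $x$, there is a height $Y$ such that $\abs{\e6_m(z)-12^m}<12^m/2$, and hence $\e6_m(z)\neq 0$, whenever $\Im z>Y$. Every $z\in\F$ satisfies $\abs{\Re z}\lqs 1/2$, so the zeros of $\e6_m$ inside $\F$ are confined to the set $\{z\in\F:\Im z\lqs Y\}$, which is bounded.

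Part (iii) is where the only genuine observation enters. I would establish the symmetry identity
$$
\e6_m(-\overline{z})=\overline{\e6_m(z)},
$$
and deduce the statement from the fact that $z\mapsto -\overline{z}$ fixes $\Im z$ while negating $\Re z$, i.e. it is reflection in the line $\Re(z)=0$. Because $\Delta$, $E_2$, $E_4$, $E_6$ all have real Fourier coefficients, each holomorphic $f$ among them satisfies $f(-\overline{z})=\overline{f(z)}$; for $E^*_2$ one checks in addition that the correction term $-3/(\pi\Im z)$ is real and that $\Im(-\overline{z})=\Im z$, so that $E^*_2(-\overline{z})=\overline{E^*_2(z)}$ as well. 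Since $\e6_m$ is a polynomial in these three functions with real coefficients, the identity follows by conjugating term by term. Consequently $\e6_m(z)=0$ if and only if $\e6_m(-\overline{z})=0$, which is the asserted symmetry of the zero set.

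I expect no serious obstacle here: parts (i) and (ii) are immediate from real analyticity and from Lemma \ref{l8} respectively, and the entire content of (iii) lies in isolating the correct reflection $z\mapsto -\overline{z}$ and verifying that the non-holomorphic $E^*_2$ still obeys $E^*_2(-\overline{z})=\overline{E^*_2(z)}$ despite its $1/y$ term.
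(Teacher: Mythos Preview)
Your proposal is correct and follows essentially the same route as the paper: Lemma \ref{l8} gives non-vanishing and hence (i) via real analyticity, the uniform limit in Lemma \ref{l8} gives (ii), and the reflection identity $\e6_m(-\overline{z})=\overline{\e6_m(z)}$ coming from the reality of the Fourier coefficients of $E_2^*$, $E_4$, $E_6$ gives (iii). Your treatment of $E_2^*$---separating the holomorphic $E_2$ from the real correction $-3/(\pi y)$---is a slightly more careful way of saying what the paper compresses into the phrase ``the Fourier coefficients at $\ci$ of $E_2^*$ \dots\ are real.''
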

\begin{proof}
As a consequence of Lemma \ref{l8},  $\e6_m \not \equiv 0$. Therefore, since $\e6_m$ is real analytic, we obtain (i).
Part (ii) also follows from Lemma \ref{l8}.
 The Fourier coefficients at $\ci$ of $E_2^*$, $E_4$ and $E_6$ are real, implying that
 \begin{equation}\label{chi}
 \e6_m(-\overline{z})=\overline{\e6_m(z)}
\end{equation}
and hence (iii).
\end{proof}

\begin{lemma} \label{lemh}
We have $\e6_m(z) \in \R$ on the vertical lines $\Re(z)=-1/2$ and $\Re(z)=0$. We have $e^{im\theta} \e6_m(e^{i\theta}) \in \R$ for $0<\theta<\pi$.
\end{lemma}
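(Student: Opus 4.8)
The plan is to derive all three assertions from the reality symmetry \eqref{chi}, namely $\e6_m(-\overline z)=\overline{\e6_m(z)}$, combined with the weight-$2m$ modularity of $\e6_m$ recorded in the proof of Lemma \ref{l8}. Since $\e6_m=12^m\partial^m\Delta/\Delta$ transforms as $\e6_m(\g z)=j(\g,z)^{2m}\e6_m(z)$ for every $\g\in\G$, I will invoke just two special cases: translation invariance $\e6_m(z+1)=\e6_m(z)$ (where $j=1$), and the inversion rule $\e6_m(-1/z)=z^{2m}\e6_m(z)$ (where $j(\g,z)=z$). Each of the three claims then reduces to pairing \eqref{chi} with one of these symmetries.

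On the line $\Re(z)=0$ we have $z=iy$, so $-\overline z=z$ and \eqref{chi} immediately gives $\e6_m(z)=\overline{\e6_m(z)}$, whence $\e6_m(z)\in\R$. On the line $\Re(z)=-1/2$ we have $z=-1/2+iy$, so $-\overline z=z+1$; combining translation invariance with \eqref{chi} yields $\e6_m(z)=\e6_m(z+1)=\e6_m(-\overline z)=\overline{\e6_m(z)}$, so again $\e6_m(z)\in\R$.

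For the unit arc, I would put $z=e^{i\theta}$ and $g(\theta):=e^{im\theta}\e6_m(e^{i\theta})$. The crucial identity is $-\overline z=-e^{-i\theta}=e^{i(\pi-\theta)}=-1/z$, which lets me chain both symmetries: by \eqref{chi} followed by the inversion rule,
$$
\overline{\e6_m(e^{i\theta})}=\e6_m(-\overline z)=\e6_m(-1/z)=z^{2m}\e6_m(z)=e^{2im\theta}\e6_m(e^{i\theta}).
$$
Multiplying through by $e^{-im\theta}$ shows $\overline{g(\theta)}=g(\theta)$, so $g(\theta)\in\R$ for $0<\theta<\pi$, which is the assertion.

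I expect no serious obstacle, since all three cases are short reductions to \eqref{chi} and modularity. The one point requiring care is the circle: one must notice that the point $-\overline z$ produced by the reality symmetry \eqref{chi} is exactly the point $-1/z$ produced by the modular inversion, so the two transformation laws compose; the resulting factor $z^{2m}=e^{2im\theta}$ is then precisely what the prefactor $e^{im\theta}$ in $g(\theta)$ is designed to cancel. No analytic input beyond the modularity already established for $\e6_m$ and the symmetry \eqref{chi} is needed.
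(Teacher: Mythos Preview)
Your argument is correct. For the two vertical lines your proof coincides with the paper's. For the unit arc your route differs: the paper does not use the weight-$2m$ inversion law for $\e6_m$ directly, but instead quotes the Rankin--Swinnerton-Dyer fact that $e^{ik\theta/2}E_k(e^{i\theta})\in\R$ for $k>2$, supplements this with Hecke's limiting argument to handle the non-holomorphic $E_2^*$, and then appeals to $\e6_m$ being a weight-homogeneous polynomial in $E_2^*,E_4,E_6$. Your approach is more self-contained: by observing that $-\overline z=-1/z$ on the unit circle, you combine \eqref{chi} with the inversion rule $\e6_m(-1/z)=z^{2m}\e6_m(z)$ (available since $\e6_m=12^m\partial^m\Delta/\Delta$ has weight $2m$) and read off the reality of $e^{im\theta}\e6_m(e^{i\theta})$ directly. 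This avoids both the external citation and the separate treatment of $E_2^*$, at the cost of relying on the weight-$2m$ modularity of $\e6_m$ already recorded in the proof of Lemma \ref{l8}.
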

\begin{proof}
For $\Re(z)=-1/2$ and $\Re(z)=0$ we have $\e6_m(z) \in \R$ using (\ref{chi}) and that $\e6_m(z+1)=\e6_m(z)$. As noted in \cite{RankinSwinnerton-Dyer:1970a} we have $e^{i k \theta/2}E_k(e^{i\theta}) \in \R$ for $0<\theta<\pi$ and $k>2$. Employing Hecke's limiting argument, as in \cite[p. 19]{BruinierGeerHarderZagierRanestad:2008a} for example, we also find that $e^{i  \theta}E^*_2(e^{i\theta}) \in \R$. As $\e6_m$ is a polynomial in $E_2^*$, $E_4$ and $E_6$, homogeneous in the weight, we obtain $e^{im\theta} \e6_m(e^{i\theta}) \in \R$ as required.
\end{proof}

Thus we may expect zeros for $\e6_m$ along the boundary of the left
half of $\F$ as the real-valued functions in Lemma \ref{lemh} change
sign. At the corners of this boundary we have $\e6_m(i)=0$ for $m
\equiv 1 \bmod 2$ and $\e6_m(\omega)=0$ for $m \equiv 1,2 \bmod 3$ by
Remark \ref{trivial-vanishing}.
In what follows we locate numerically the zeros of $\e6_m$ in  $\F$, using the first 25 terms in the Fourier expansions at $\ci$ of $E_2^*$, $E_4$ and $E_6$  to get approximations to $\e6_m$.

\vskip 3mm
\noindent
{\bf Zeros of $\e6_1$.} We have $\e6_1(z)=0$  for $z=i, \omega$. Numerically there seem to be no further zeros.

\vskip 3mm
\noindent
{\bf Zeros of $\e6_2$.} We have $\e6_2(z)=0$  for $z= \omega$ and  the points
$$
1.344 i, \quad -1/2+1.29i.
$$
\begin{proof}[Proof of Theorem \ref{0}.]
The  value $1.344 i$ lies between the CM points $i=\z_{-4}$ and $\sqrt{2} i=\z_{-8}$, where we may explicitly compute $\e6_2$. With  the table on p. 87 of \cite{BruinierGeerHarderZagierRanestad:2008a} we compute $\e6_2(\z_{-4})=-144 \Omega_{-4}^4 <0$ and $\e6_2(\z_{-8})=72 \Omega_{-8}^4 >0$. Therefore the real-valued $\e6_2(z)$  changes sign between $i$ and $\sqrt{2} i$, proving that there is a zero in the vicinity of $1.344 i$.
\end{proof}

\vskip 3mm
\noindent
{\bf Zeros of $\e6_3$.} We have $\e6_3(z)=0$  for $z= i$ and  the points
$$
1.666i, \quad -1/2+1.642 i, \quad -1/2+1.155i.
$$

\vskip 3mm
\noindent
{\bf Zeros of $\e6_6$.} The 9 inequivalent zeros of $\e6_6(z)$  are
shown on the left of Figure 1. They all have real part $0$ or $-1/2$ or absolute value $1$.

\vskip 3mm
\noindent
{\bf Zeros of $\e6_7$.}  This is the smallest $m$ for which $\e6_m(z)$ appears to have a pair of zeros not on the boundary of the left half of $\F$. The pair is $\pm 0.302 +1.18i$.
The remaining 11 zeros of $\e6_7(z)$ are on the boundary.

\vskip 3mm
\noindent
{\bf Zeros of $\e6_8$.} The 18 zeros of $\e6_8(z)$  are
shown on the right of Figure 1. Three pairs do not have real part $0$ or $-1/2$ or absolute value $1$.

\section{Analogues of Petersson's formula} \label{analogs}\label{more on poincareseries}
We return to the case of a general, discrete, finitely generated, finite volume group $\G \subseteq \SL(2,\R)$, and assume for simplicity that $\G$ has a cusp of width 1 at $\infty$.
Let $\mathcal{F}$ be an orthonormal basis for
  $S_k(\Gamma)$. Petersson first proved the following
formula:
\begin{equation}\label{petform}
    \sum_{f\in
  \mathcal{F}} c_\infty(f,n) \overline{c_\infty(f,m)}  = \frac{(4\pi \sqrt{mn})^{k-1}}{(k-2)!}\left( \delta_{mn} + 2\pi (i^{-k}) \sum_{c >o} c^{-1} S(m,n;c)J_{k-1}\left(\frac{4\pi \sqrt{mn}}{c}\right)\right).
\end{equation}
Here the sum on the right runs over positive lower right entries of the group $\G$, $S(m,n;c)$ is the Kloosterman sum related to $\G$, and $J$ is a
Bessel function - see for example
\cite[Prop. 14.5]{IwaniecKowalski:2004a}  or \cite[Theorem
9.6]{Iwaniec:2002a} for further details.
In this section we find explicit expressions for
\begin{equation}\label{newsums}
\sum_{f\in \mathcal{F}}c_{z_0}(f,m) \overline{c_\infty(f,n)}  \quad
\text{ and } \quad \sum_{f\in \mathcal{F}} c_{z_0}(f,m) \overline{c_{z_0'}(f,n)}
\end{equation}
with $z_0$, $z_0' \in \H$.
Since the non-cuspidal Fourier coefficients $c_{z_0}(f,m)$ are essentially non-holomorphic  weight $k+2m$ forms
(recall (\ref{czm})), we can relate the two sums in (\ref{newsums}) to special
values of  non-holomorphic objects of weight $k+2m$ in $z_0$ and weight $k+2n$ in $z_0'$, which we define below.

\subsection{Averages of products of cuspidal and non-cuspidal Fourier coefficients}
Recall our notation $\beta=\Im(z_0)$.
Consider the Fourier expansion  at $z_0$ of the  Poincar\'e
series associated to $\ci$ and the Fourier expansion at infinity of the Poincar\'e series associated to $z_0$:
\begin{eqnarray*}
% \nonumber to remove numbering (before each equation)
  P_n|_k \sz (z) &=& \sum_{l \in \Z_{\geqslant 0}} c_{z_0}(P_n, l)z^l, \\
  P_{z_0,m}(z) &=& \sum_{l \in \Z_{\geqslant 1}} c_\infty(P_{z_0,m},l) q^l.
\end{eqnarray*}
Computing $\s{P_n}{P_{z_0,m}}$ with Propositions \ref{porth} and \ref{eorth} and equating the results we obtain
$$
\overline{c_\infty(P_{z_0,m},n)} = \frac{\pi (4 \pi n)^{k-1}}{2^{k-3} (m+k-1)!} c_{z_0}(P_n, m).
$$
Using (\ref{czm}) to rewrite the right-hand side gives
\begin{equation}\label{ppp}
\overline{c_\infty(P_{z_0,m},n)} = \frac{\pi (4 \pi n)^{k-1}(-4\pi \beta)^m (2i \beta)^{k/2}}{2^{k-3} (m+k-1)!} \partial^m P_n(z_0).
\end{equation}
To find $\partial^m P_n(z_0)$ more explicitly set
\begin{equation}\label{hk}
F_k(z,n,s):=\sum_{\g \in \Gamma_\infty\backslash\Gamma} \Im(\g z)^{s-k/2} j(\g,z)^{-k} e^{2\pi i n \g z}.
\end{equation}
For $\Re(s)>1$, the series defining $F_k(z,n,s)$ converges absolutely and uniformly to a non-holomorphic, weight $k$ Poincar\'e series. Verify that
$$
\partial_k F_k(z,n,s) = n F_{k+2}(z,n,s+1) -\frac{s+k/2}{4\pi} F_{k+2}(z,n,s)
$$
and by induction
\begin{equation*}
\partial^m F_k(z,n,s) = (-4\pi )^{-m} \sum_{j=0}^m (-4\pi n)^j \binom{m}{j} \frac{\G(s+k/2+m)}{\G(s+k/2+j)} F_{k+2m}(z,n,s+j).
\end{equation*}
Since $F_k(z,n,k/2)=P_n$ we obtain
\begin{equation}\label{pppp}
\partial^m P_n = (-4\pi )^{-m} \sum_{j=0}^m (-4\pi n)^j \binom{m}{j} \frac{(m+k-1)!}{(j+k-1)!} F_{k+2m}(z,n,k/2+j)
\end{equation}
 and combining (\ref{ppp}) with (\ref{pppp}) proves
\begin{prop} \label{ellparexpansion} With the above notation
$$
\overline{c_\infty(P_{z_0,m},n)} = \frac{2 \beta^{m+k/2}}{n(-2i)^{k/2}} \sum_{j=0}^m  \binom{m}{j} \frac{(-4\pi n)^{k+j}}{(j+k-1)!} F_{k+2m}(z_0,n,k/2+j).
$$
\end{prop}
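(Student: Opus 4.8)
The plan is to obtain the identity by substituting the explicit expansion \eqref{pppp} of $\partial^m P_n$ into the formula \eqref{ppp} for $\overline{c_\infty(P_{z_0,m},n)}$, and then to reconcile the resulting scalar with the one claimed. Both input formulas are already in hand: \eqref{ppp} arises from evaluating $\s{P_n}{P_{z_0,m}}$ in two ways, via Propositions \ref{porth} and \ref{eorth}, and rewriting the right-hand side with \eqref{czm}; while \eqref{pppp} arises from the raising-operator recursion for the non-holomorphic Poincar\'e series $F_k(z,n,s)$ together with the specialization $F_k(z,n,k/2)=P_n$. So no new analytic input is needed, and the proof is purely a matter of collecting constants.

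First I would substitute \eqref{pppp} into \eqref{ppp}. The factorials $(m+k-1)!$ cancel immediately, leaving the sum over $j$ with $(j+k-1)!$ in the denominator and the Poincar\'e series $F_{k+2m}(z_0,n,k/2+j)$ intact. Next I would merge the two powers of $-4\pi$: the factor $(-4\pi\beta)^m$ from \eqref{ppp} against the overall $(-4\pi)^{-m}$ from \eqref{pppp} collapses to $\beta^m$, which combines with the $\beta^{k/2}$ hidden inside $(2i\beta)^{k/2}$ to give the advertised $\beta^{m+k/2}$. At this stage the scalar in front of $\binom{m}{j}(-4\pi n)^j F_{k+2m}(z_0,n,k/2+j)/(j+k-1)!$ reads $\pi(4\pi n)^{k-1}(2i)^{k/2}/2^{k-3}$.

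The remaining, and only delicate, step is to check that this equals the per-term scalar implicit in the statement. Writing $(-4\pi n)^{k+j}=(-4\pi n)^k(-4\pi n)^j$ and using that $k$ is even to replace $(-4\pi n)^k$ by $(4\pi n)^k$, the claimed scalar becomes $8\pi(4\pi n)^{k-1}/(-2i)^{k/2}$ after extracting one factor of $4\pi$ against the $1/n$. Thus the whole verification reduces to the identity
\begin{equation*}
8\cdot 2^{k-3} = (2i)^{k/2}(-2i)^{k/2} = \bigl(-4i^2\bigr)^{k/2} = 4^{k/2},
\end{equation*}
i.e. $2^k=2^k$, which holds. The main obstacle is precisely this constant and sign bookkeeping---keeping track of $(2i)^{k/2}$ versus $(-2i)^{k/2}$ and the stray powers of $2$ coming from Proposition \ref{eorth}---and the evenness of $k$ is exactly what makes the signs cooperate.
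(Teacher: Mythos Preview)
Your proposal is correct and follows exactly the paper's own argument: the paper derives \eqref{ppp} and \eqref{pppp} in the lines immediately preceding the proposition and then states that ``combining (\ref{ppp}) with (\ref{pppp}) proves'' it. Your explicit verification of the scalar identity $(2i)^{k/2}(-2i)^{k/2}=2^k$ is the constant bookkeeping that the paper leaves to the reader.
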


\begin{theorem}\label{petterson} For any $m \in \Z_{\geqslant 0}$, $n \in \Z_{\geqslant 1}$ we have
 $$
\sum_{f\in \mathcal{F}} c_{z_0}(f,m) \overline{c_\infty(f,n)} =
\frac{-(2i)^{k/2}\beta^{m+k/2}}{(k-2)!} \sum_{j=0}^m  \binom{m+k-1}{m-j} \frac{(-4\pi n)^{k+j-1}}{j!}  F_{k+2m}(z_0,n,k/2+j).
$$
\end{theorem}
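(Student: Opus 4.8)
The plan is to expand the elliptic Poincar\'e series $P_{z_0,m}$ in the orthonormal basis $\mathcal{F}$ and then read off its Fourier coefficient at the cusp $\infty$. Since $P_{z_0,m}\in S_k(\G)$ and $\mathcal{F}$ is an orthonormal basis, one has $P_{z_0,m}=\sum_{f\in\mathcal{F}}\s{P_{z_0,m}}{f}\,f$. Proposition \ref{eorth} identifies $\s{f}{P_{z_0,m}}$, up to the \emph{real} constant $\pi(k-2)!\,m!/\bigl(2^{k-3}(m+k-1)!\bigr)$, with $c_{z_0}(f,m)$; conjugating the inner product gives $\s{P_{z_0,m}}{f}=\frac{\pi(k-2)!\,m!}{2^{k-3}(m+k-1)!}\,\overline{c_{z_0}(f,m)}$.

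First I would take the $n$-th Fourier coefficient at $\infty$ of both sides of the basis expansion. The left-hand side becomes $c_\infty(P_{z_0,m},n)$ and the right-hand side becomes $\frac{\pi(k-2)!\,m!}{2^{k-3}(m+k-1)!}\sum_{f}\overline{c_{z_0}(f,m)}\,c_\infty(f,n)$. Taking complex conjugates then isolates exactly the average we want:
\[
\sum_{f\in\mathcal{F}} c_{z_0}(f,m)\overline{c_\infty(f,n)}=\frac{2^{k-3}(m+k-1)!}{\pi(k-2)!\,m!}\;\overline{c_\infty(P_{z_0,m},n)}.
\]
Equivalently, one can reach the same relation by expressing the product $c_{z_0}(f,m)\overline{c_\infty(f,n)}$ through Propositions \ref{porth} and \ref{eorth}, summing over $f$ by Parseval to produce $\s{P_n}{P_{z_0,m}}$, and applying Proposition \ref{porth} once more; this is a matter of taste.

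The remaining step is to insert Proposition \ref{ellparexpansion}, which already gives $\overline{c_\infty(P_{z_0,m},n)}$ as an explicit sum over $j$ of the Poincar\'e series $F_{k+2m}(z_0,n,k/2+j)$, and to collect the accumulated constants into the exact shape of the statement. The hard part is purely this bookkeeping of factors, and two small identities do all the work. The first is the binomial rearrangement
\[
\binom{m}{j}\frac{(m+k-1)!}{m!\,(j+k-1)!}=\frac{1}{j!}\binom{m+k-1}{m-j},
\]
which converts the binomial factor coming from Proposition \ref{ellparexpansion} into the one appearing in the theorem. The second is the phase identity $2^{k}/(-2i)^{k/2}=(2i)^{k/2}$, immediate from $(2i)(-2i)=4$. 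Finally, cancelling one factor $-4\pi n$ in $(-4\pi n)^{k+j}$ against the $\pi n$ in the denominator supplies the overall $-4$ (hence the minus sign and the factor $2^{k}$ after combining with $2^{k-2}$) and lowers the exponent to $k+j-1$. Assembling these yields precisely
\[
\frac{-(2i)^{k/2}\beta^{m+k/2}}{(k-2)!}\sum_{j=0}^m \binom{m+k-1}{m-j}\frac{(-4\pi n)^{k+j-1}}{j!}\,F_{k+2m}(z_0,n,k/2+j),
\]
which completes the proof.
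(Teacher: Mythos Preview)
Your proof is correct and follows essentially the same route as the paper: relate the average to $\overline{c_\infty(P_{z_0,m},n)}$ via the inner product of $P_n$ and $P_{z_0,m}$ (your alternative formulation is exactly what the paper does), then insert Proposition~\ref{ellparexpansion} and simplify. Your bookkeeping identities, $\binom{m}{j}\frac{(m+k-1)!}{m!\,(j+k-1)!}=\frac{1}{j!}\binom{m+k-1}{m-j}$ and $2^{k}/(-2i)^{k/2}=(2i)^{k/2}$, are precisely the ones needed.
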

\begin{proof}
We expand from the basis $\mathcal{F}$ and use Propositions \ref{porth}, \ref{eorth} to find
\begin{align}
\s{P_n}{P_{z_0, m}}&=\sum_{f\in
  \mathcal{F}}\s{P_n}{f}\s{f}{P_{z_0, m}} \label{nsu}\\
& =\sum_{f\in
  \mathcal{F}} \overline{c_\infty(f,n)} \frac{(k-2)!}{(4\pi n)^{k-1}}\frac{\pi (k-2)! m!}{2^{k-3} (m+k-1)!} c_{z_0}(f,m) \nonumber
\end{align}
On the other hand
the left  side of \eqref{nsu} equals
$(k-2)! (4\pi n)^{1-k}\overline{c_\infty(P_{z_0,m},n)}$
and using the expression from Proposition \ref{ellparexpansion}  completes the proof.
\end{proof}

\subsection{Averages of products of non-cuspidal Fourier coefficients}
To get a similar result for the second sum in (\ref{newsums}) we put $$
Q_{m,l}(z):=z^m \left(\frac{\overline{z}}{|z|^2-1} \right)^l
$$
and
\begin{eqnarray} \nonumber
G_k(z,z_0;m,l) & := & \sum_{\g \in \G} \left( Q_{m,l} |_k \sz^{-1} \g \right)(z)\\
& = & (2i\beta)^{k/2} \sum_{\g \in \G} \frac{\left(\sz^{-1} \g z \right)^{m-l}}{j\left(\sz^{-1} \g, z \right)^{k}}
\left( \frac{\left| \sz^{-1} \g z \right|^2}{\left| \sz^{-1} \g z \right|^2-1}\right)^l. \label{gk}
\end{eqnarray}
Then $G_k(z,z_0;m,0) = P_{z_0,m}(z)$ and for all $m \gqs 0$ and $0 \lqs l < k/2-2$ we have (using the methods of \cite[\S \S 4, 5]{ImamogluOSullivan:2009}) that $G_k(z,z_0;m,l)$ converges absolutely and uniformly to a non-holomorphic weight $k$ modular form. Note also that
we may use
$\sigma_{z}0=z$  to express \eqref{gk} in the different form:
\begin{equation}\label{sum2}
G_k(z,z_0;m,l)=\frac{\left(z_0-\overline{z_0}\right)^{k/2}}{\left(z-\overline{z}\right)^{k}} \sum_{\left(\smallmatrix a
& b \\ c & d \endsmallmatrix\right)\in\sz^{-1}\G \sigma_{z}} d^{-k} \left(\frac bd \right)^{m-l}
\left( \frac{\left| \frac bd \right|^2}{\left| \frac bd \right|^2-1}\right)^l.
\end{equation}

\begin{theorem}\label{elpeter} For  all $m,n  \in \Z_{\geqslant 0}$ we have
 \begin{multline}
\sum_{f\in \mathcal{F}}\overline{c_{z_0}(f,m)} c_{z_0'}(f,n)=\frac{2^{k-3} (m+k-1)!\left(z_0'-\overline{z_0'}\right)^{n+k/2}}{\pi m! (k-2)!}\\
\times \sum_{j=0}^n \binom{m}{j} \binom{n+k-1}{n-j} G_{k+2n}(z_0',z_0;m-j,n-j).
\end{multline}
\end{theorem}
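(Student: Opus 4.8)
The plan is to follow the strategy behind Theorem \ref{petterson}, evaluating the Petersson product $\s{P_{z_0, m}}{P_{z_0', n}}$ in two ways.

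\emph{Spectral evaluation.} By completeness of the orthonormal basis $\mathcal{F}$,
\[
\s{P_{z_0, m}}{P_{z_0', n}} = \sum_{f\in\mathcal{F}} \s{P_{z_0, m}}{f}\,\s{f}{P_{z_0', n}}.
\]
The bracketed constant in Proposition \ref{eorth} is a positive real, so conjugating gives $\s{P_{z_0, m}}{f} = \overline{c_{z_0}(f,m)}\bigl[\pi(k-2)!\,m!/(2^{k-3}(m+k-1)!)\bigr]$, while $\s{f}{P_{z_0', n}} = c_{z_0'}(f,n)\bigl[\pi(k-2)!\,n!/(2^{k-3}(n+k-1)!)\bigr]$. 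Hence the spectral side equals the target sum $\sum_f \overline{c_{z_0}(f,m)}\, c_{z_0'}(f,n)$ times the product of these two explicit factorial factors.

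\emph{Direct evaluation and reduction.} Applying Proposition \ref{eorth} once more, now with $f = P_{z_0, m} \in S_k(\G)$ and the elliptic series $P_{z_0', n}$, gives $\s{P_{z_0, m}}{P_{z_0', n}} = c_{z_0'}(P_{z_0, m}, n)\bigl[\pi(k-2)!\,n!/(2^{k-3}(n+k-1)!)\bigr]$. Equating the two evaluations and cancelling the common factor leaves
\[
\sum_{f\in\mathcal{F}}\overline{c_{z_0}(f,m)}\,c_{z_0'}(f,n) = c_{z_0'}(P_{z_0, m}, n)\,\frac{2^{k-3}(m+k-1)!}{\pi (k-2)!\,m!},
\]
so everything reduces to the $n$-th Fourier coefficient of $P_{z_0,m}$ at $z_0'$. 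For this I would invoke \eqref{czm} with $2i\,\Im(z_0') = z_0'-\overline{z_0'}$, giving $c_{z_0'}(P_{z_0, m}, n) = (2\pi i)^n (z_0'-\overline{z_0'})^{n+k/2}\,\partial^n P_{z_0, m}(z_0')/n!$, and then substitute the elliptic analogue of \eqref{pppp}, namely
\[
\partial^n P_{z_0, m}(z) = \frac{n!}{(2\pi i)^n}\sum_{j=0}^n \binom{m}{j}\binom{n+k-1}{n-j} G_{k+2n}(z,z_0;m-j,n-j),
\]
whose insertion reproduces the claimed formula exactly (note $(z_0'-\overline{z_0'})^{n+k/2}$ is precisely the power appearing in the statement).

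The heart of the matter, and the main obstacle, is this last identity for $\partial^n P_{z_0,m}$, which I would establish by induction on $n$ from the base case $P_{z_0,m}=G_k(z,z_0;m,0)$. Since the Maass raising operator is equivariant under the stroke action (and under the biholomorphism $\sz$), one has $\partial_k\bigl(Q_{m,l}|_k\sz^{-1}\g\bigr) = (\partial_k^{\D} Q_{m,l})|_{k+2}\sz^{-1}\g$, where $\partial_k^{\D}$ denotes the raising operator transported to the disc; thus $\partial_k G_k(z,z_0;m,l)$ reduces to applying $\partial_k^{\D}$ to the single monomial $Q_{m,l}(w)=w^{m-l}\bigl(|w|^2/(|w|^2-1)\bigr)^l$. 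The computation should show that $\partial_k^{\D}Q_{m,l}$ is a linear combination of $Q_{m-1,l}$ and $Q_{m,l+1}$, so that $\partial_k$ carries $G_{\kappa}(\cdot;a,b)$ into a combination of $G_{\kappa+2}(\cdot;a-1,b)$ and $G_{\kappa+2}(\cdot;a,b+1)$; each step lowers the holomorphic exponent $a-b$ by one, and iterating $n$ times over the two choices produces exactly the index pattern $(m-j,n-j)$. The delicate part is the bookkeeping: tracking the weight-dependent coefficients through the recursion so that the two binomial factors $\binom{m}{j}$ and $\binom{n+k-1}{n-j}$ emerge with the correct normalization, and justifying term-by-term differentiation via the absolute and uniform convergence of the $G_{k+2n}$ established by the methods of \cite{ImamogluOSullivan:2009}.
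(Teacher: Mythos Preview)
Your proposal is correct and follows essentially the same route as the paper: expand $\s{P_{z_0,m}}{P_{z_0',n}}$ spectrally via Proposition~\ref{eorth}, compare with the direct evaluation $c_{z_0'}(P_{z_0,m},n)$, convert via \eqref{czm}, and then establish the key identity \eqref{partiale} for $\partial^n P_{z_0,m}$ by induction. The only cosmetic difference is that where you invoke equivariance of $\partial_k$ and a ``transported'' raising operator $\partial_k^{\D}$ on the disc, the paper instead computes $\partial_k(h|_k\tau)$ directly for $\tau\in\GL(2,\C)$ (equation \eqref{partt}) and then uses the identity $\tau\overline{z}=1/\overline{\tau z}$, valid for $\tau=\sz^{-1}\g$ with $\g\in\SL(2,\R)$, to obtain the explicit recursion $\partial_k(Q_{m,l}|_k\tau)=\tfrac{1}{2\pi i}\bigl((k-l)Q_{m,l+1}|_{k+2}\tau+mQ_{m-1,l}|_{k+2}\tau\bigr)$; this makes the weight-dependent coefficient $(k-l)$ (responsible for the factor $\binom{n+k-1}{n-j}$) transparent without having to define $\partial_k^{\D}$ separately.
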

\begin{proof}
As in Proposition \ref{petterson},
\begin{align*}
\s{P_{z_0, m}}{P_{z_0', n}}&=\sum_{f\in
  \mathcal{F}}\s{P_{z_0, m}}{f}\s{f}{P_{z_0', n}}\\
& =\sum_{f\in
  \mathcal{F}} \overline{c_{z_0}(f,m)} \frac{\pi (k-2)! m!}{2^{k-3} (m+k-1)!} c_{z_0'}(f,n) \frac{\pi (k-2)! n!}{2^{k-3} (n+k-1)!}
\end{align*}
where the left-hand side is
\begin{equation*}
\frac{\pi (k-2)! n!}{2^{k-3} (n+k-1)!} c_{z_0'}(P_{z_0,m},n).
\end{equation*}
Thus, with (\ref{czm}), it remains to compute $\partial^n P_{z_0,m}(z)|_{z=z_0'}$. Let $\tau$ be any element of $\GL(2,\C)$. Then for any differentiable function $h$ we have $\frac{d}{dz} h(\tau z)= \det \tau \cdot h'(\tau z) j(\tau, z)^{-2}$. Also
$$
\frac{d}{dz} j(\tau, z)^{-k} = \frac{-k}{2iy} j(\tau, z)^{-k} + \frac{k}{2iy} j(\tau, z)^{-k-2} j(\tau,z)j(\tau, \overline{z}).
$$
Hence
\begin{equation} \label{partt}
\partial_k (h|_k \tau) = \frac{1}{2\pi i} \left(h'|_{k+2} \tau +k\left[ \frac{j(\tau,z)j(\tau, \overline{z})}{2iy \det \tau}\right] h|_{k+2} \tau \right).
\end{equation}
Now $j(\tau,z)j(\tau, \overline{z})/(2iy \det \tau) = 1/(\tau z -\tau \overline{z})$. In the case where $\tau = \sz^{-1} \g$ with $\g \in \SL(2,\R)$ we have $\tau \overline{z} = 1/ \overline{\tau z}$ and then it follows from (\ref{partt}) that
$$
\partial_k (Q_{l,m}|_k \tau) = \frac{1}{2\pi i} \left((k-l)Q_{l+1,m}|_{k+2} \tau +m Q_{l,m-1}|_{k+2} \tau \right).
$$
Therefore
$$
\partial_k G_k(z,z_0;m,l) = \frac{1}{2\pi i} \left((k-l)G_{k+2}(z,z_0;m,l+1)  +m G_{k+2}(z,z_0;m-1,l) \right)
$$
and by induction
$$
\partial^n G_k(z,z_0;m,l) = \frac{n!}{(2\pi i)^n} \sum_{j=0}^n \binom{m}{j} \binom{n-l+k-1}{n-j} G_{k+2n}(z,z_0;m-j,l+n-j).
$$
With $l=0$ we find
\begin{equation}\label{partiale}
\partial^n P_{z_0,m} = \frac{n!}{(2\pi i)^n} \sum_{j=0}^n \binom{m}{j} \binom{n+k-1}{n-j} G_{k+2n}(z,z_0;m-j,n-j)
\end{equation}
and the proof is complete.
\end{proof}

\subsection{A second proof of Proposition \ref{ellparexpansion}}
We give another demonstration of the key  Proposition \ref{ellparexpansion}. In this argument the Poincar\'e series $F_k(z,n,s)$ emerges very naturally.

\begin{lemma}\label{ellparexpansion2} We have
  \begin{equation} \label{eq:1}
    c_\ci(P_{z_0,m},n)=
    \frac{2}{n(2i)^{k/2}} \sum_{j=0}^m \beta^{k/2+j} \binom{m}{j} \frac{(-4\pi n)^{k+j}}{(k+j-1)!} \sum_{\genfrac{}{}{0pt}{0}{\gamma\in \Gamma\slash\Gamma_\infty}{\left(\smallmatrix a
& b \\ c & d \endsmallmatrix\right)=\sz^{-1}\gamma}} \frac{a^{m-j}}{c^{k+m+j}} e^{2\pi i nd/c}.
  \end{equation}
\end{lemma}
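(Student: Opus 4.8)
The plan is to compute $c_\ci(P_{z_0,m},n)$ directly as a Fourier coefficient, by integrating $P_{z_0,m}$ against $e^{-2\pi i n x}$ over a period and unfolding the $\G$-sum, exactly as in the standard treatment of cuspidal Poincar\'e series. Since $P_{z_0,m}\in S_k(\G)$ is holomorphic, comparing with its $q$-expansion gives, for any fixed $y>0$,
\[
c_\ci(P_{z_0,m},n)=e^{2\pi n y}\int_0^1 P_{z_0,m}(x+iy)\,e^{-2\pi i n x}\,dx.
\]
Writing $\tau=\sz^{-1}\g=\left(\begin{smallmatrix} a & b \\ c & d\end{smallmatrix}\right)$ and $g_\g(z):=(\tau z)^m/j(\tau,z)^k$, so that $P_{z_0,m}=(2i\beta)^{k/2}\sum_{\g\in\G}g_\g$ by \eqref{ellpoin}, the crucial identity is that right translation behaves as $g_{\g T_\ell}(z)=g_\g(z+\ell)$ for $T_\ell=\left(\begin{smallmatrix}1 & \ell\\0&1\end{smallmatrix}\right)$, while $g_{-\g}=g_\g$ because $k$ is even.

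These two facts let me group $\G$ into right cosets $\g\,\G_\infty$ with $\G_\infty=\{\pm T_\ell\}$: each coset folds into $2\sum_{\ell\in\Z}g_\g(\,\cdot\,+\ell)$, the factor $2$ arising from $\pm I\in\G_\infty$. Summing over $\ell$ then unfolds the integral over $[0,1]$ into one over all of $\R$, giving
\[
c_\ci(P_{z_0,m},n)=2(2i\beta)^{k/2}e^{2\pi n y}\sum_{\g\in\G/\G_\infty}\int_{-\infty}^\infty g_\g(u+iy)\,e^{-2\pi i n u}\,du.
\]
A feature special to the elliptic case is that $c\neq0$ in every coset, since the first entry of the bottom row $(1,-\overline{z_0})\g$ of $\tau$ cannot vanish when $z_0\notin\R$ and $\g$ has real entries; hence there is no degenerate ``diagonal'' term, and all cosets are handled uniformly. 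Using $\det\tau=2i\beta$, i.e.\ $ad-bc=2i\beta$, I rewrite $az+b=\bigl(a(cz+d)-2i\beta\bigr)/c$ and expand binomially,
\[
g_\g(z)=\frac{(az+b)^m}{(cz+d)^{m+k}}=\frac1{c^m}\sum_{j=0}^m\binom{m}{j}a^{m-j}(-2i\beta)^j(cz+d)^{-(k+j)},
\]
reducing the problem to integrals of a single power of $cz+d$.

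Each such integral is evaluated by the substitution $v=u+d/c$, which extracts the phase $e^{2\pi i nd/c}$ and a factor $c^{-(k+j)}$, followed by the classical contour formula
\[
\int_{-\infty}^\infty\frac{e^{-2\pi i n v}}{(v+iy)^{\kappa}}\,dv=\frac{(-2\pi i)^{\kappa}\,n^{\kappa-1}}{(\kappa-1)!}\,e^{-2\pi n y}\qquad(n\gqs1,\ \kappa\gqs2),
\]
obtained by closing the contour in the lower half-plane and taking the order-$\kappa$ residue at $v=-iy$; the hypotheses $k\gqs4$, $j\gqs0$ ensure $\kappa=k+j\gqs2$ and absolute convergence. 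Putting $\kappa=k+j$ and reassembling, the $e^{-2\pi n y}$ cancels the outer $e^{2\pi n y}$ and every coset contributes $c^{-(m+k+j)}a^{m-j}e^{2\pi i nd/c}$ with the $j$-dependent constants of \eqref{eq:1}. A short check using that $k$ is even (so $i^{2k}=1$ and $(-1)^k=1$) collapses the accumulated powers of $2$, $i$, $\pi$, $\beta$ into $\tfrac{2}{n(2i)^{k/2}}\beta^{k/2+j}(-4\pi n)^{k+j}/(k+j-1)!$, which is exactly \eqref{eq:1}.

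The analytic content is light; the main obstacle is bookkeeping. One must pin down the factor of $2$ coming from $\pm I\in\G_\infty$, confirm that the coset invariants $a$, $c$ and $e^{2\pi i nd/c}$ are genuinely well defined on $\G/\G_\infty$ (invariance under both $T_\ell$ and $-I$, using $k$ even), and track the sign in the residue computation carefully. The interchange of summation and integration is justified by the absolute and uniform convergence of $P_{z_0,m}$ for $k\gqs4$ noted after \eqref{ellpoin}.
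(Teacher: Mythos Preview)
Your argument is correct and is essentially the paper's own proof. Both proceed by grouping $\G$ into right $\G_\infty$-cosets (picking up the factor $2$ from $\pm I$), using $\tau z=a/c-2i\beta/(c(cz+d))$ to expand $(\tau z)^m$ binomially, and evaluating the resulting one-variable integrals by a contour/residue computation; the only cosmetic difference is that the paper phrases the unfolding step as ``Poisson summation on the inner $\Z$-sum'' while you phrase it as ``unfold $[0,1]$ to $\R$'', which are the same thing.
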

\begin{proof}
Write
\begin{equation*}
  P_{z_0,m}(z) = 2(2i\beta)^{k/2}\sum_{\gamma\in \Gamma\slash\Gamma_\infty}\sum_{n=-\infty}^\infty \frac{(\sz^{-1} \g (z+n))^{m}}
    {j(\sz^{-1} \g,(z+n))^{k}}.
\end{equation*}
Using Poisson summation on the inner sum we obtain
\begin{equation*}
P_{z_0,m}(z) =  2(2i\beta)^{k/2}\sum_{\gamma\in \Gamma\slash\Gamma_\infty}\sum_{n=-\infty}^\infty I(\gamma,z,n)
  \end{equation*}
where, for $\left(\smallmatrix a
& b \\ c & d \endsmallmatrix\right)=\sz^{-1}\gamma$,
\begin{align*}
I(\gamma,z,n)&=\int_\R\frac{\left(\frac ac -\frac{2 i \beta}{c(c(z+t)+d)}\right)^m}{(c(z+t)+d)^k}e^{-2\pi i nt}\,dt\\
&=\sum_{j=0}^m\binom{m}{j}\left(\frac{a}{c}\right)^{m-j}\left(\frac{-2 i \beta}{c}\right)^{j}\int_{-\infty}^{\infty}\frac{e^{-2\pi i nt}}{(c(z+t)+d)^{k+j}}\,dt\\
&=\sum_{j=0}^m\binom{m}{j}\left(\frac{a}{c}\right)^{m-j}\left(\frac{-2 i \beta}{c}\right)^{j}
e^{2\pi i nd/c}e^{2\pi i nz} \int_{-\infty+iy}^{\infty+iy}\frac{e^{-2\pi i nu}}{(cu)^{k+j}}\,du.
\end{align*}
We note that $c$ is never
zero when $\gamma\in \hbox{SL}_2(\mathbb{R})$, so division by $c$ is
not a problem in the above calculations.
When $n\leqslant 0$ we see, by moving the line of integration upwards,
that the integral vanishes. When $n$ is positive we can move the line
of integration downwards and see that the integral equals
\begin{equation*}
  -2\pi i \operatornamewithlimits{Res}_{u=0}\frac{e^{-2\pi i nu}}{(cu)^{k+m-j}}=\frac{(-2\pi i n/c)^{k+j}}{n (k+j-1)!}
\end{equation*}
Putting together the different terms we obtain the lemma.
\end{proof}

Rewriting the series on the right of (\ref{eq:1}) using the following identities
\begin{eqnarray*}
    \sum_{\genfrac{}{}{0pt}{0}{\gamma\in \Gamma\slash\Gamma_\infty}{\left(\smallmatrix a
& b \\ c & d \endsmallmatrix\right)=\sz^{-1}\gamma}} f(a,b,c,d)
      & = &  \sum_{\genfrac{}{}{0pt}{0}{\gamma^{-1}\in \Gamma\slash\Gamma_\infty}{\left(\smallmatrix a
& b \\ c & d \endsmallmatrix\right)=\sz^{-1}\gamma^{-1}}} f(a,b,c,d)\\
& = &  \sum_{\genfrac{}{}{0pt}{0}{\gamma\in \Gamma_\infty\backslash\Gamma}{\left(\smallmatrix a
& b \\ c & d \endsmallmatrix\right)^{-1}=\g \sz}} f(a,b,c,d)\\
& = &  \sum_{\displaystyle \left(\smallmatrix a'
& b' \\ c' & d' \endsmallmatrix\right) \in \Gamma_\infty\backslash\Gamma} f(c'z_0+d', -a'z_0-b', c' \overline{z_0} +d', -a' \overline{z_0} -b')
\end{eqnarray*}
we conclude that
\begin{eqnarray*}
\sum_{\genfrac{}{}{0pt}{0}{\gamma\in \Gamma\slash\Gamma_\infty}{\left(\smallmatrix a
& b \\ c & d \endsmallmatrix\right)=\sz^{-1}\gamma}} \frac{a^{m-j}}{c^{k+m+j}} e^{2\pi i nd/c}
 & = &
\sum_{\g \in \G_\infty\backslash\G}\frac{j(\g,z_0)^{m-j}}{j(\g,\overline{z_0})^{k+m+j}}e^{-2\pi i n \g \overline{z_0}}\\
& = &
\beta^{m-j}\sum_{\g \in \G_\infty\backslash\G} \Im(\g z_0)^{j-m} j(\g,\overline{z_0})^{-k-2m} e^{-2\pi i n \g \overline{z_0}}\\
& = &
\beta^{m-j} \overline{F_{k+2m}}(z_0,n,k/2+j).
\end{eqnarray*}
Therefore
\begin{equation*}
  c_\ci(P_{z_0,m},n)=
    \frac{2\beta^{k/2+m}}{n(2i)^{k/2}} \sum_{j=0}^m \binom{m}{j} \frac{(-4\pi n)^{k+j}}{(k+j-1)!} \overline{F_{k+2m}}(z_0,n,k/2+j)
\end{equation*}
and we have arrived at Proposition \ref{ellparexpansion}  by another route.

\subsection{Examples}
We illustrate the results of this section with some particular cases.
\begin{example}
{\em
When $k=12$ and $\Gamma=\SL(2,\Z)$, $S_k(\Gamma)$ is 1-dimensional and $\mathcal{F}=\{\Delta/\norm{\Delta}\}$. In this case Theorem
\ref{petterson} implies that
\begin{equation}\label{ppe}
    \tau(n)c_{z_0}(\Delta,m)= \frac{64}{10!} \| \Delta \|^2 \Im(z_0)^{m+6} \sum_{j=0}^m  \binom{m+11}{m-j} \frac{(-4\pi n)^{j+11}}{j!}  F_{2m+12}(z_0,n,j+6).
\end{equation}
We may verify \eqref{ppe} in the case of $m=0$:
\begin{equation}\label{ppe2}
\tau(n)c_{z_0}(\Delta,0)= \frac{64}{10!} \| \Delta \|^2 \Im(z_0)^{6} (-4\pi n)^{11}  F_{12}(z_0,n,6)
\end{equation}
where $F_{12}(z_0,n,6) = P_n(z_0)$. With \eqref{ptau}, equation \eqref{ppe2} reduces to $c_{z_0}(\Delta,0) = \left(z_0-\overline{z_0}\right)^6 \Delta(z_0)$ which now follows from \eqref{cz0}.
}
\end{example}

\begin{example}
{\em
Consider \eqref{ppe} for $z_0=\z_D$ and $D\in
\mathscr D$. Divide both sides by the  coefficient
$c_{\z_D}(\Delta,m)$, which is non-zero  by Theorem \ref{ell lehmer} (assuming $2|m$ if $\z=\z_{-4}$ or $3|m$ if $\z=\z_{-3}$), and we find
 Lehmer's conjecture is equivalent to showing that for every $n$ there exists an  $m$ and a $\z_D$ as above with
$$
\sum_{j=0}^m  \binom{m+11}{m-j} \frac{(-4\pi n)^{j}}{j!}  F_{2m+12}(\z_D,n,j+6) \neq 0.
$$
}
\end{example}

\begin{example}
{\em
Specializing Theorem \ref{elpeter} to $\G=\SL(2,\Z)$, $k=12$, $m=n$, $z_0=z_0' \in \H$, and employing \eqref{sum2}, we obtain a final vanishing criterion for $P_{z_0,m}$ and $c_{z_0}(\Delta,m)$:
\begin{eqnarray*}
% \nonumber to remove numbering (before each equation)
  P_{z_0,m} \equiv 0 & \iff & |c_{z_0}(\Delta,m)|^2 =0 \\
   & \iff & \sum_{j=0}^m \binom{m}{j} \binom{m+11}{j} G_{2m+12}(z_0,z_0;j,j) =0\\
   & \iff & \sum_{j=0}^m \binom{m}{j} \binom{m+11}{j} \sum_{\left(\smallmatrix a
& b \\ c & d \endsmallmatrix\right)\in\sz^{-1}\G \sz} d^{-2m-12}
\left( \frac{\left| \frac bd \right|^2}{\left| \frac bd \right|^2-1}\right)^j =0.
\end{eqnarray*}
}
\end{example}

\bibliography{bibliography}

\end{document}